\newtheorem{theo}{Theorem}[section]
\newtheorem{defi}[theo]{Definition}
\newtheorem{lem}[theo]{Lemma}
\newtheorem{prop}[theo]{Proposition}
\newtheorem{rem}[theo]{Remark}
\newtheorem{coro}[theo]{Corollary}
\newtheorem{exam}[theo]{Example}
\newcommand{\ggot}{\ensuremath{\mathfrak{g}}}
\newcommand{\hgot}{\ensuremath{\mathfrak{h}}}
\newcommand{\kgot}{\ensuremath{\mathfrak{k}}}
\newcommand{\slgot}{\ensuremath{\mathfrak{sl}}}
\newcommand{\tgot}{\ensuremath{\mathfrak{t}}}
\newcommand{\ugot}{\ensuremath{\mathfrak{u}}}
\newcommand{\Acal}{\ensuremath{\mathcal{A}}}
\newcommand{\Bcal}{\ensuremath{\mathcal{B}}}
\newcommand{\Ccal}{\ensuremath{\mathcal{C}}}
\newcommand{\Lcal}{\ensuremath{\mathcal{L}}}
\newcommand{\Mcal}{\ensuremath{\mathcal{M}}}
\newcommand{\Ncal}{\ensuremath{\mathcal{N}}}
\newcommand{\Ocal}{\ensuremath{\mathcal{O}}}
\newcommand{\Qcal}{\ensuremath{\mathcal{Q}}}
\newcommand{\Xcal}{\ensuremath{\mathcal{X}}}
\newcommand{\Ycal}{\ensuremath{\mathcal{Y}}}
\newcommand{\Ucal}{\ensuremath{\mathcal{U}}}
\newcommand{\Vcal}{\ensuremath{\mathcal{V}}}
\newcommand{\Pbb}{\ensuremath{\mathbb{P}}}
\newcommand{\Z}{\ensuremath{\mathbb{Z}}}
\newcommand{\C}{\ensuremath{\mathbb{C}}}
\newcommand{\R}{\ensuremath{\mathbb{R}}}
\newcommand{\tore}{\ensuremath{\mathbb{T}}}
\newcommand{\fgene}{\ensuremath{\mathcal{C}^{-\infty}}}
\newcommand{\croc}{\ensuremath{\hookrightarrow}}
\newcommand{\indice}{\ensuremath{\hbox{\rm Index}}}
\newcommand{\Cr}{\ensuremath{\hbox{\rm Cr}}}
\newcommand{\Tr}{\ensuremath{\hbox{\rm Tr}}}
\newcommand{\Char}{\ensuremath{\hbox{\rm Char}}}
\newcommand{\End}{\ensuremath{\hbox{\rm End}}}
\newcommand{\Thom}{\ensuremath{\hbox{\rm Thom}}}
\newcommand{\qfor}{\ensuremath{\mathcal{Q}^{-\infty}}}
\newcommand{\Rfor}{\ensuremath{R^{-\infty}}}
\newcommand{\Rforc}{\ensuremath{R^{-\infty}_{tc}}}
\def \K {{\rm \bf K}}
\def \T {{\rm T}}
\def \what {\widehat}
\def \wtde {\widetilde}
\def \indH {{\rm Ind}^{^K}_{_H}}
\def \HolT {{\rm Hol}^{^K}_{_T}}
\def \Hols {{\rm Hol}^{^K}_{_{K_{\sigma}}}}
\def \ad {{\rm ad}}
\def \clif {{\bf c}}
\title{Formal Geometric Quantization II}
\author{Paul-Emile  PARADAN}
\address{Institut de Math\'ematiques et de Mod\'elisation de Montpellier (I3M), 
Universit\'e Montpellier 2} 
\email{Paul-Emile.Paradan@math.univ-montp2.fr}
\date{June 2009}
\begin{document}


\begin{abstract}
In this paper we pursue the study of formal geometric quantization of non-compact Hamiltonian manifolds. Our main result 
is the proof that two quantization process coincide. This fact was obtained by Ma and Zhang in the preprint Arxiv:0812.3989 
by completely different means.
\end{abstract}


\maketitle

{\def\thefootnote{\relax}
\footnote{{\em Keywords} : moment map, reduction, geometric quantization,
transversally elliptic symbol.\\
{\em 1991 Mathematics Subject Classification} : 58F06, 57S15, 19L47.}
\addtocounter{footnote}{-1}
}

{\small
\tableofcontents}

In the previous article \cite{pep-formal}, we have studied some functorial properties of
the ``formal geometric quantization'' process $\qfor$, which is defined on {\em proper Hamiltonian manifolds}, e.g. 
{\em non-compact} Hamiltonian manifolds with {\em proper} moment map. 

There is another way, denoted $\Qcal^\Phi$, of quantizing proper Hamiltonian manifolds by localizing the
index of the Dolbeault Dirac operator on the critical points of the square of the moment map \cite{Ma-Zhang,pep-RR,pep-ENS}.

The main purpose of this paper is to provide a geometric proof 
that the quantization process $\qfor$ and $\Qcal^\Phi$ coincide. This fact was proved by Ma and Zhang in the 
recent preprint \cite{Ma-Zhang} by completely different means.

\medskip

{\em Keywords:} moment map ; symplectic reduction ; geometric quantization ;
transversally elliptic symbol.

\section{Introduction and statement of results}\label{sec:intro}

Let us first recall the definition of the geometric quantization of a
smooth and compact Hamiltonian manifold. Then we show two way of extending 
the notion of geometric quantization to the case of a \emph{non-compact} 
Hamiltonian manifold. 

\medskip

Let $K$ be a compact connected Lie group, with Lie algebra $\kgot$.
In the Kostant-Souriau framework, a Hamiltonian $K$-manifold
$(M,\Omega,\Phi)$ is pre-quantized if there is an equivariant
Hermitian line bundle $L$ with an invariant Hermitian connection
$\nabla$ such that
\begin{equation}\label{eq:kostant-L}
    \Lcal(X)-\nabla_{X_M}=i\langle\Phi,X\rangle\quad \mathrm{and} \quad
    \nabla^2= -i\Omega,
\end{equation}
for every $X\in\kgot$. Here $X_M$ is the vector field on $M$ defined
by $X_M(m)=\frac{d}{dt} e^{-tX}m|_{0}$.

The data $(L,\nabla)$ is also called a Kostant-Souriau line bundle, and 
$\Phi: M\to\kgot^*$ is the moment map. Remark that 
conditions (\ref{eq:kostant-L}) imply via the equivariant
Bianchi formula the relation
\begin{equation}\label{eq:hamiltonian-action}
    \iota(X_M)\Omega= -d\langle\Phi,X\rangle,\quad X\in\kgot.
\end{equation}

Let us recall the notion of geometric quantization when $M$ is \textbf{compact}.
Choose a $K$-invariant almost complex structure $J$ on $M$ which is
compatible with $\Omega$ in the sense that the symmetric bilinear
form $\Omega(\cdot,J\cdot)$ is a Riemannian metric. Let
$\overline{\partial}_L$ be the Dolbeault operator with coefficients
in $L$, and let $\overline{\partial}_L^*$ be its (formal) adjoint.
The \emph{Dolbeault-Dirac operator} on $M$ with coefficients in $L$
is $D_L= \overline{\partial}_L+\overline{\partial}_L^*$, considered
as an elliptic operator from $\Acal^{0,\textrm{\tiny even}}(M,L)$ to
$\Acal^{0,\textrm{\tiny odd}}(M,L)$. Let $R(K)$ be the representation ring of $K$.

\begin{defi}\label{def:quant-compact-lisse}
 The geometric quantization of a {\em compact} Hamiltonian $K$-manifold $(M,\Omega,\Phi)$ is the element 
 $\Qcal_K(M)\in R(K)$ defined as the equivariant index of the
Dolbeault-Dirac operator $D_L$.
\end{defi}

\medskip 

Let us consider the case of a \textbf{proper} Hamiltonian $K$-manifold $M$:  the manifold is (perhaps) \textbf{non-compact} but 
the moment map $\Phi: M\to \kgot^{*}$ is supposed to be proper. Under this properness assumption, one define the 
{\em formal geometric quantization} of $M$  as an element $\qfor_K(M)$ that belongs to $\Rfor(K)$ \cite{Weitsman,pep-formal}. 
Let us recall the definition.

\medskip

Let $T$ be a maximal torus of $K$. Let $\tgot^*$ be the dual of  the Lie algebra 
of $T$ containing the weight lattice $\wedge^*$ : $\alpha\in \wedge^*$ if $i\alpha:\tgot\to i \R$ is the 
differential of a character of $T$. Let $C_K\subset \tgot^*$ be a Weyl chamber, and let 
$\what{K}:=\wedge^*\cap C_K$ be the set of dominant weights. The ring
of characters $R(K)$ has a $\Z$-basis $V_\mu^K,\mu\in \what{K}$ :
$V_\mu^K$ is the irreducible representation of $K$ with highest weight
$\mu$. 

A representation $E$ of $K$ is  {\em admissible} if it has finite
$K$-multiplicities : \break $\dim(\hom_K(V_\mu^K,E))<\infty$ for
every $\mu\in\what{K}$.  Let $\Rfor(K)$  be the Grothendieck group 
associated to the $K$-admissible representations. We have an inclusion map $R(K)\croc \Rfor(K)$ 
and  $\Rfor(K)$ is canonically identify with $\hom_\Z(R(K),\Z)$. 

For any $\mu\in \what{K}$ which is a regular value of moment map $\Phi$, the
reduced space (or symplectic quotient)
$M_{\mu}:=\Phi^{-1}(K\cdot\mu)/K$ is a {\em compact} orbifold equipped with a
symplectic structure $\Omega_{\mu}$. Moreover
$L_{\mu}:=(L\vert_{\Phi^{-1}(\mu)}\otimes \C_{-\mu})/K_{\mu}$ is a
Kostant-Souriau line orbibundle over $(M_{\mu},\Omega_{\mu})$. The
definition of the index of the Dolbeault-Dirac operator carries over
to the orbifold case, hence $\Qcal(M_{\mu})\in \Z$ is defined. In
Section \ref{subsec:symplectic-quotient}, we explain how this notion of geometric 
quantization extends further to the case of singular symplectic quotients. So the integer 
$\Qcal(M_{\mu})\in \Z$ is well
defined for every $\mu\in \what{K}$: in particular
$\Qcal(M_{\mu})=0$ if $\mu\notin\Phi(M)$.

\begin{defi}\label{def:formal-quant}
Let $(M,\Omega,\Phi)$ be a {\em proper} Hamiltonian $K$-manifold which is 
prequantized by a Kostant-Souriau line bundle $L$. The formal quantization of $(M,\Omega,\Phi)$ is 
the element of $\Rfor(K)$ defined by
$$
\qfor_K(M)=\sum_{\mu\in \what{K}}\Qcal(M_{\mu})\, V_{\mu}^{K}\ .
$$
\end{defi}

\medskip

When $M$ is compact, the fact that 
\begin{equation}\label{eq:Q-R=0}
\Qcal_K(M)=\qfor_K(M)
\end{equation}
is known as the ``quantization commutes with reduction Theorem''. This  was conjectured 
by Guillemin-Sternberg in \cite{Guillemin-Sternberg82} and was first proved by Meinrenken 
\cite{Meinrenken98} and  Meinrenken-Sjamaar \cite{Meinrenken-Sjamaar}. Other proofs 
of (\ref{eq:Q-R=0}) were also given by Tian-Zhang \cite{Tian-Zhang98} and the author 
\cite{pep-RR}. For complete references on the subject the reader should consult 
\cite{Sjamaar96,Vergne-Bourbaki}.

One of the main feature of the formal geometric quantization $\qfor$ is its stability relatively to 
the restriction to subgroups.

\begin{theo}[\cite{pep-formal}]\label{theo:pep-formal}
Let $M$ be a pre-quantized Hamiltonian $K$-manifold which is \emph{proper}. Let $H\subset K$ be
a closed connected Lie subgroup such that $M$ is still \emph{proper} as a
Hamiltonian $H$-manifold. Then $\qfor_K(M)$ is $H$-admissible and we
have $\qfor_K(M)|_H=\qfor_H(M)$ in $\Rfor(H)$.
\end{theo}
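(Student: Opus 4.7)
The plan is to establish $\qfor_K(M)|_H = \qfor_H(M)$ multiplicity by multiplicity, via a shifting-trick argument combined with reduction in stages relating $K$- and $H$-reductions of $M$.

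\textbf{Step 1 (admissibility).} Decomposing $V_\mu^K|_H = \sum_\nu [V_\mu^K : V_\nu^H]\, V_\nu^H$, admissibility of $\qfor_K(M)|_H$ reduces to showing, for each fixed $\nu \in \what{H}$, that the set of $\mu \in \what{K}$ with $\Qcal(M_\mu)\,[V_\mu^K : V_\nu^H] \neq 0$ is finite. Non-vanishing of $\Qcal(M_\mu)$ forces $K\mu \cap \Phi(M)\neq\emptyset$; non-vanishing of the branching coefficient forces $\nu$ to lie in the projection $p(\mathrm{conv}(W_K \cdot \mu))\subset\hgot^*$, where $p : \kgot^* \to \hgot^*$ is dual to $\hgot\hookrightarrow\kgot$. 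Combined with the hypothesis that $p\circ\Phi$ is proper, so that $\Phi(M) \cap p^{-1}(C)$ is bounded for any compact $C\subset\hgot^*$, these two constraints confine $\mu$ to a finite set.

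\textbf{Step 2 (comparison of multiplicities).} I would compare the multiplicity of each $V_\nu^H$ on both sides using the shifting trick. By definition the multiplicity of $V_\nu^H$ in $\qfor_H(M)$ is the quantization of the $H$-symplectic reduction $N_\nu$ of $M$ at $\nu$, which the standard shifting construction identifies with the zero-reduction of $M \times \overline{H\cdot\nu}$ for $H$ (where $\overline{H\cdot\nu}$ is the coadjoint orbit with opposite symplectic form, prequantized by $\C_{-\nu}$). On the other side, the multiplicity of $V_\nu^H$ in $\qfor_K(M)|_H$ is $\sum_\mu \Qcal(M_\mu)\,[V_\mu^K:V_\nu^H]$. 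To equate these, I would express each branching coefficient via the ordinary Meinrenken--Sjamaar $[Q,R]=0$ theorem applied to the compact $H$-Hamiltonian manifold $K\cdot\mu \times \overline{H\cdot\nu}$, and then assemble the $K$-reductions $M_\mu$ with these into the $H$-reduction $N_\nu$ through a reduction-in-stages argument.

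\textbf{Main obstacle.} The principal difficulty is making the shifting trick and reduction in stages rigorous in the non-compact formal setting, and in handling the singularities of the reduced spaces at non-regular moment map values. This requires combining the extension of $\Qcal$ to singular symplectic quotients with a careful symplectic cross-section argument that exhibits the relevant $K$- and $H$-reductions as compact objects, permitting the direct application of ordinary $[Q,R]=0$. Once this is in place, Step~1 justifies all infinite sum reorganizations and the identity emerges coefficient by coefficient.
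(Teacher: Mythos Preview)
This theorem is cited from \cite{pep-formal} and is not proved in the present paper; there is no proof here to compare your proposal against. What the paper does make clear, however, is the machinery on which the original proof rests: the alternative characterization $\qfor_K(M)=\lim_{n\to\infty}\Qcal_K(M_{nP})$ via non-abelian symplectic cutting (Proposition~\ref{prop:defi-2-qfor}). With that description, restriction to $H$ can be reduced to the compact $[Q,R]=0$ theorem applied to the cuts $M_{nP}$, rather than by a direct coefficient-by-coefficient comparison of reduced spaces.

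Your outline is a different route. Step~1 is essentially sound, though your phrasing hides the key point: from $[V_\mu^K:V_\nu^H]\neq 0$ one gets a point of $K\cdot\mu$ lying in $p^{-1}(H\cdot\nu)$, and since $K\cdot\mu\subset\Phi(M)$ this point lies in the compact set $\Phi(M)\cap p^{-1}(H\cdot\nu)$; invariance of the norm on $\kgot^*$ then bounds $\|\mu\|$. Step~2 is where the substance lies, and you correctly flag the obstacle: making reduction-in-stages rigorous for possibly singular quotients, with the quantization $\Qcal$ extended as in Section~\ref{subsec:symplectic-quotient}, is exactly the non-trivial content. The symplectic-cut approach of \cite{pep-formal} sidesteps this by replacing $M$ with compact approximations $M_{nP}$ on which ordinary Meinrenken--Sjamaar applies wholesale; your approach would instead need a careful local model (cross-section plus shifting) at each $\nu$. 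Either can be made to work, but the cutting method packages the singular analysis more cleanly.
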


\medskip

When $M$ is a proper Hamiltonian $K$-manifold, we can also define another 
``formal geometric quantization'', denoted  
\begin{equation}\label{eq:Q-Phi}
\Qcal^{\Phi}_K(M)\in \Rfor(K),
\end{equation}
by localizing the index of the Dolbeault-Dirac operator $D_L$ on the set $\Cr(\|\Phi\|^2)$ of 
critical points of the square of the moment map (see Section \ref{sec:Qcal-Phi} for the precise definiton).
 We proved in previous papers  
\cite{pep-ENS,pep-formal,pep-hol-series}
that 
\begin{equation}\label{eq:qfor=qfor}
\qfor_K(M)=\Qcal^{\Phi}_K(M).
\end{equation}
in somes situations: 

$\bullet$ $M$ is a coadjoint orbit of a semi-simple Lie group $S$ that 
parametrizes a representation of the discrete series of $S$,

$\bullet$ $M$ is a Hermitian vector space.

\medskip

In her ICM 2006 plenary lecture \cite{VergneICM}, Vergne conjectured that (\ref{eq:qfor=qfor}) holds when 
$\Cr(\|\Phi\|^2)$ is compact. Recently, Ma and Zhang \cite{Ma-Zhang} prove the following generalisation 
of this conjecture.

\medskip

\begin{theo}\label{theo:intro}
The equality (\ref{eq:qfor=qfor}) holds for {\bf any} proper Hamiltonian $K$-manifold.
\end{theo}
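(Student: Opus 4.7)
The plan is to show that $\Qcal^\Phi_K(M)$ satisfies the same structural properties as $\qfor_K(M)$, and then match them piece-by-piece on the critical set of $\|\Phi\|^2$.

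First, I would establish the analogue of Theorem \ref{theo:pep-formal} for $\Qcal^\Phi$: if $H \subset K$ is a closed connected subgroup for which $M$ remains a proper Hamiltonian $H$-manifold, then $\Qcal^\Phi_K(M)|_H$ is $H$-admissible and equals $\Qcal^\Phi_H(M)$ in $\Rfor(H)$. The geometric input is the relationship between $\Cr(\|\Phi_K\|^2)$ and $\Cr(\|\Phi_H\|^2)$ (the first is contained in the second after slicing by the appropriate infinitesimal stabilizer) together with a compatibility between the Kirwan vector fields that define the two transversally elliptic symbols. Applying this with $H=T$ a maximal torus and combining with Theorem \ref{theo:pep-formal}, it suffices to prove $\qfor_T(M) = \Qcal^\Phi_T(M)$ in $\Rfor(T)$: for then both sides of (\ref{eq:qfor=qfor}) have the same restriction to $T$, and an element of $\Rfor(K)$ is determined by its $T$-restriction.

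In the torus case, $\Cr(\|\Phi\|^2)$ decomposes as a disjoint union $\bigsqcup_\beta Z_\beta$ indexed by a discrete set of elements $\beta\in\tgot^*$, and by excision $\Qcal^\Phi_T(M)$ splits as a sum of local contributions $\Qcal^\Phi_T(M)_\beta$, each supported in a $T$-invariant neighborhood of $Z_\beta$. Using the symplectic normal form near each critical component, $\Qcal^\Phi_T(M)_\beta$ becomes computable in a compact local model built out of the symplectic normal bundle of $Z_\beta$. For a fixed weight $\lambda\in\wedge^*$, I would then show that only finitely many $\beta$ contribute to the $\lambda$-isotypical part, and identify the total multiplicity with $\Qcal(M_\lambda)$. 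The mechanism is the shifting trick: the multiplicity of $\C_\lambda$ in $\Qcal^\Phi_T(M)_\beta$ is interpreted as the trivial-isotypical component of a twisted local quantization $\Qcal^\Phi_T(M\times\{-\lambda\})$, which by the compact ``quantization commutes with reduction'' theorem applied to each local model assembles into $\Qcal(M_\lambda)$.

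The main obstacle I expect is establishing $H$-admissibility of $\Qcal^\Phi_K(M)|_H$ in the reduction to the torus, since $\Cr(\|\Phi_K\|^2)$ may have non-compact components inside a much larger $\Cr(\|\Phi_H\|^2)$, so that a priori infinitely many $H$-isotypical pieces could appear in a single $K$-isotypical component. The delicate step is a uniform finiteness statement controlling how the critical components of $\|\Phi_K\|^2$ sit inside those of $\|\Phi_H\|^2$, which ultimately rests on a careful analysis of the Kirwan vector field and the properness of $\Phi_H$. A secondary technical point is the convergence and well-definedness of the infinite sum $\sum_\beta \Qcal^\Phi_T(M)_\beta$ as a formal $T$-character, which reduces to the same type of multiplicity-finiteness statement in the abelian setting.
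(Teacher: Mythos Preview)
Your strategy is quite different from the paper's, and both of its key steps contain genuine gaps.

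The paper does not reduce to a torus. It proves instead (Theorem~\ref{theo:fondamental}) that for any $K$-adapted polytope $P$ the difference $\Qcal^\Phi_K(M)-\Qcal_K(M_P)$ is supported outside the ball $B_{r_P}$, where $M_P$ is the non-abelian symplectic cut built from the wonderful compactification $\Xcal_P$. Dilating $P$ gives $\Qcal^\Phi_K(M)=\lim_n\Qcal_K(M_{nP})$, and this limit equals $\qfor_K(M)$ by Proposition~\ref{prop:defi-2-qfor}. The comparison with $\Qcal_K(M_P)$ is carried out through three auxiliary quantizations of $M\times\T^*K$ and $M\times\Xcal_P$ (Propositions~\ref{prop:etape1}--\ref{prop:etape3}), using a parameter $\rho$ that rescales the norm on $\kgot_1^*\times\kgot_2^*$ so that the relevant Kirwan vector fields can be homotoped on relatively compact sets.

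Your Step~1, the restriction identity $\Qcal^\Phi_K(M)|_H=\Qcal^\Phi_H(M)$, is not a preliminary lemma but essentially the theorem in another guise: the two sides localize on the critical sets of $\|\Phi_K\|^2$ and $\|\Phi_H\|^2$, which are entirely different, and relating them is a global deformation problem, not a combinatorial statement about how the components nest. In the paper this restriction property (and the closely related norm-independence, Proposition~\ref{prop:norm-independant}) is a \emph{consequence} of Theorem~\ref{theo:intro}, not an input. Your Step~3 hides the same circularity: the shifting trick gives the $\C_\lambda$-multiplicity in $\Qcal^\Phi_T(M)$ as $[\Qcal^\Phi_T(M)\otimes\C_{-\lambda}]^T$, but identifying this with $\Qcal(M_\lambda)$ requires it to equal $[\Qcal^{\Phi-\lambda}_T(M)]^T$, i.e., that $\Qcal^\Phi$ is independent of where the Kirwan function is centered. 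That independence is again the abelian case of what you are trying to prove, and the paper only obtains it after the main theorem is in hand.
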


\medskip

This article is dedicated to the study of the quantization map $\Qcal^\Phi$:
\begin{enumerate}
\item[$\bullet$] In Section \ref{sec:Qcal-Phi}, we give the precise definition of the quantization process $\Qcal^\Phi$. In particular, we refine the constant $c_\gamma$ appearing in \cite{Ma-Zhang}[Theorem 0.1].
\item[$\bullet$] In Section \ref{subsec:quant-point}, we explain how to compute the quantization of a point.
\item[$\bullet$] We give in Section \ref{sec:preuve} another proof of  Theorem \ref{theo:intro} by using the 
technique of symplectic cutting developped in \cite{pep-formal}. 
\item[$\bullet$] In Section \ref{sec:prop-Q-Phi}, we consider the case where $K=K_1\times K_2$ acts on $M$ in a way that the 
symplectic reduction $M{/\!\!/}_{0}K_1$ is a {\em smooth} proper $K_2$-Hamiltonian manifold. We show then that 
the $K_1$-invariant part of $\Qcal^\Phi_{K_1\times K_2}(M)$ is equal to $\Qcal^{\Phi_2}_{K_2}(M{/\!\!/}_{0}K_1)$.
\end{enumerate}

In Section \ref{sec:quant-K/H}, we study the example where $M$ is the cotangent bundle of a homogeneous space:  
$M=\T^*(K/H)$ where $H$ is a closed subgroup of $K$.  We  see that 
$\T^*(K/H)$ is a proper Hamiltonian $K$-manifold prequantized by the trivial line bundle.  
A direct computation gives 
\begin{equation}\label{eq:Q-Phi-G-G/H}
\Qcal^{\Phi}_K(\T^*(K/H))= {\rm L}^2(K/H) \quad {\rm in}\quad \Rfor(K).
\end{equation}

Let us denoted $[\T^*(K/H)]_{\mu,K}$ the symplectic reduction at $\mu\in\what{K}$ of 
the K-Hamiltonian manifold $\T^*(K/H)$. Theorem \ref{theo:intro} together with (\ref{eq:Q-Phi-G-G/H}) give
$$
\Qcal\left([\T^*(K/H)]_{\mu,K}\right)=\dim [V_\mu^K]^H,
$$
for any $\mu\in\what{K}$. Here $[V_\mu^K]^H\subset V_\mu^K$ is the subspace of $H$-invariant vectors.

Then we consider the action of a closed connected subgroup $G\subset K$ on $\T^*(K/H)$. We first 
check that $\T^*(K/H)$ is a {\em proper} Hamiltonian $G$-manifold  if and only if the restriction 
${\rm L}^2(K/H)\vert_G$ is an admissible $G$-representation. Then, using Theorem \ref{theo:pep-formal},  we get that
\begin{equation}\label{eq:Q-Phi-K-G/H}
\qfor_G(\T^*(K/H))= {\rm L}^2(K/H)\vert_G \quad {\rm in}\quad \Rfor(G).
\end{equation}
In other words, the multiplicity of $V_\lambda^G$ in ${\rm L}^2(K/H)$ is equal to the quantization of the 
reduced space $[\T^*(K/H)]_{\lambda,G}$.

\section{Quantizations of non-compact manifolds}

In this section we define the quantization process $\Qcal^{\Phi}$, and we give another definition 
of the quantization process $\qfor$ that uses the notion of symplectic cutting \cite{pep-formal}.

\subsection{Transversally elliptic symbols}\label{subsec:transversally}

Here we give the basic definitions from the theory of transversally
elliptic symbols (or operators) defined by Atiyah-Singer in
\cite{Atiyah74}. For an axiomatic treatment of the index morphism
see Berline-Vergne \cite{B-V.inventiones.96.1,B-V.inventiones.96.2} and 
Paradan-Vergne \cite{pep-vergne:bismut}. For a short introduction see \cite{pep-RR}.

Let $\Xcal$ be a {\em compact} $K$-manifold. Let $p:\T
\Xcal\to \Xcal$ be the projection, and let $(-,-)_\Xcal$ be a
$K$-invariant Riemannian metric. If $E^{0},E^{1}$ are
$K$-equivariant complex vector bundles over $\Xcal$, a
$K$-equivariant morphism $\sigma \in \Gamma(\T
\Xcal,\hom(p^{*}E^{0},p^{*}E^{1}))$ is called a {\em symbol} on $\Xcal$. The
subset of all $(x,v)\in \T \Xcal$ where\footnote{The map $\sigma(x,v)$ will be also denote 
$\sigma\vert_x(v)$} $\sigma(x,v): E^{0}_{x}\to
E^{1}_{x}$ is not invertible is called the {\em characteristic set}
of $\sigma$, and is denoted by $\Char(\sigma)$.

In the following, the product of a symbol  $\sigma$ 
by a complex vector bundle $F\to M$, is the symbol
$$
\sigma\otimes F
$$
defined by $\sigma\otimes F(x,v)=\sigma(x,v)\otimes {\rm Id}_{F_x}$ from 
$E^{0}_x\otimes F_x$ to $E^{1}_x\otimes F_x$. Note that $\Char(\sigma\otimes F)=\Char(\sigma)$.

Let $\T_{K}\Xcal$ be the following subset of $\T \Xcal$ :
$$
   \T_{K}\Xcal\ = \left\{(x,v)\in \T \Xcal,\ (v,X_{\Xcal}(x))_{_{\Xcal}}=0 \quad {\rm for\ all}\
   X\in\kgot \right\} .
$$

A symbol $\sigma$ is {\em elliptic} if $\sigma$ is invertible
outside a compact subset of $\T \Xcal$ (i.e. $\Char(\sigma)$ is
compact), and is $K$-{\em transversally elliptic} if the
restriction of $\sigma$ to $\T_{K}\Xcal$ is invertible outside a
compact subset of $\T_{K}\Xcal$ (i.e. $\Char(\sigma)\cap
\T_{K_2}\Xcal$ is compact). An elliptic symbol $\sigma$ defines an
element in the equivariant $\K$-theory of $\T\Xcal$ with compact
support, which is denoted by $\K_{K}(\T \Xcal)$, and the
index of $\sigma$ is a virtual finite dimensional representation of
$K$, that we denote $\indice^K_{\Xcal}(\sigma)\in R(K)$
\cite{Atiyah-Segal68,Atiyah-Singer-1,Atiyah-Singer-2,Atiyah-Singer-3}.

Let  
$$
\Rforc(K)\subset R^{-\infty}(K)
$$ 
be the $R(K)$-submodule formed by all the infinite sum $\sum_{\mu\in\what{K}}m_\mu V_\mu^K$ 
where the map $\mu\in\what{K}\mapsto m_\mu\in\Z$ has at most a {\em
polynomial} growth. The $R(K)$-module $\Rforc(K)$ is the Grothendieck group 
associated to the {\em trace class} virtual $K$-representations: we can associate 
to any $V\in\Rforc(K)$, its trace $k\to \Tr(k,V)$ which is a generalized function on $K$ 
invariant by conjugation. Then the trace defines a morphism of $R(K)$-module
\begin{equation}\label{eq:trace}
\Rforc(K)\croc \fgene(K)^K.
\end{equation}

A $K$-{\em transversally elliptic} symbol $\sigma$ defines an
element of $\K_{K}(\T_{K}\Xcal)$, and the index of
$\sigma$ is defined as a trace class virtual representation of $K$, that we still denote 
$\indice^K_\Xcal(\sigma)\in \Rforc(K)$. 

Remark that any elliptic symbol of $\T \Xcal$ is $K$-transversally
elliptic, hence we have a restriction map $\K_{K}(\T
\Xcal)\to \K_{K}(\T_{K}\Xcal)$, and a commutative
diagram
\begin{equation}\label{indice.generalise}
\xymatrix{ \K_{K}(\T \Xcal) \ar[r]\ar[d]_{\indice^K_\Xcal}
&
\K_{K}(\T_{K}\Xcal)\ar[d]^{\indice^K_\Xcal}\\
R(K)\ar[r] & \Rforc(K)\ .
   }
\end{equation}

\medskip

Using the {\em excision property}, one can easily show that the
index map $\indice^K_\Ucal: \K_{K}(\T_{K}\Ucal)\to
\Rforc(K)$ is still defined when $\Ucal$ is a
$K$-invariant relatively compact open subset of a
$K$-manifold (see \cite{pep-RR}[section 3.1]).

\medskip

Suppose now that the group $K$ is equal to the product $K_1\times K_2$. When a symbol $\sigma$ 
is $K_1\times K_2$-transversaly elliptic we will be interested in the $K_1$-invariant part of its index, 
that we denote
$$
\left[\indice^{K_1\times K_2}_\Xcal(\sigma)\right]^{K_1}\in \Rforc(K_2).
$$

An intermediate notion between the ``ellipticity'' and ``$K_1\times K_2$-transversal ellipticity'' is 
the ``$K_1$-transversal ellipticity''. When a $K_1\times K_2$-equivariant morphism $\sigma$ is 
{\em $K_1$-transversally elliptic}, its index $\indice^{K_1\times K_2}_\Xcal(\sigma)\in \Rforc(K_1\times K_2)$, 
viewed as a generalized function on $K_1\times K_2$, is {\em smooth} relatively to the variable in $K_2$.
It implies that $\indice^{K_1\times K_2}_\Xcal(\sigma)=\sum_{\lambda} \theta(\lambda)\otimes V_\lambda^{K_1}$ 
with 
$$
\theta(\lambda)\in R(K_2),\quad \forall\ \lambda\in\what{K_1}.
$$
In particular, we know that 
$$
\left[\indice^{K_1\times K_2}_\Xcal(\sigma)\right]^{K_1}=\theta(0)
$$
belongs to $R(K_2)$.

\medskip


Let us recall the multiplicative property of the index map for the product
of manifolds that was proved by Atiyah-Singer in \cite{Atiyah74}.  Consider a compact Lie group $K_2$ acting on two
manifolds $\Xcal_1$ and $\Xcal_2$, and assume that another compact
Lie group $K_1$
acts on $\Xcal_1$ commuting with the action of $K_2$.

The external product of complexes on $\T\Xcal_1$ and $\T\Xcal_2$ induces
a multiplication (see \cite{Atiyah74,pep-vergne:bismut}):
$$
\odot: K_{K_1\times K_2}(\T_{K_1} \Xcal_1)\times K_{K_2}(\T_{K_2} \Xcal_2)
\longrightarrow K_{K_1\times K_2}(\T_{K_1\times K_2} (\Xcal_1\times \Xcal_2)).
$$

The following property will be used frequently in the paper.

\begin{theo}[Multiplicative property] \label{theo:multiplicative-property}
For any $[\sigma_1]\in K_{K_1\times K_2}(\T_{K_1} \Xcal_1)$ and
any $[\sigma_2]\in K_{K_2}(\T_{K_2} \Xcal_2)$ we have
$$
\indice^{K_1\times K_2}_{\Xcal_1\times \Xcal_2}([\sigma_1]\odot[\sigma_2])
=\indice^{K_1\times K_2}_{\Xcal_1}([\sigma_1])\otimes\indice^{K_2}_{\Xcal_2}([\sigma_2]).
$$
\end{theo}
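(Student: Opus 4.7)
The plan is to invoke the axiomatic characterization of the transversally elliptic index of Atiyah--Singer, in the form refined by Berline--Vergne \cite{B-V.inventiones.96.1,B-V.inventiones.96.2}, and to check that both sides of the claimed equality define $R(K_1\times K_2)$-linear maps
$$
A, B : K_{K_1\times K_2}(\T_{K_1}\Xcal_1) \longrightarrow \Rforc(K_1\times K_2)
$$
satisfying the same short list of characterizing axioms. Concretely, I fix $[\sigma_2]\in K_{K_2}(\T_{K_2}\Xcal_2)$ and set
$$
A([\sigma_1]) = \indice^{K_1\times K_2}_{\Xcal_1\times\Xcal_2}([\sigma_1]\odot[\sigma_2]), \qquad B([\sigma_1]) = \indice^{K_1\times K_2}_{\Xcal_1}([\sigma_1])\otimes\indice^{K_2}_{\Xcal_2}([\sigma_2]),
$$
reducing the theorem to the identity $A = B$.

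I would then verify three axioms in turn. \emph{Excision:} if $U\subset \Xcal_1$ is an open $K_1\times K_2$-invariant subset and $[\sigma_1]$ is supported in $\T_{K_1}U$, then $[\sigma_1]\odot[\sigma_2]$ is supported over $\T_{K_1\times K_2}(U\times\Xcal_2)$, and both $A$ and $B$ factor through the excision map attached to $U\times\Xcal_2\hookrightarrow \Xcal_1\times\Xcal_2$. \emph{Compatibility with Bott--Thom classes:} for a $K_1\times K_2$-equivariant complex vector bundle $\Vcal\to\Xcal_1$ with Thom class $\tau_\Vcal$, the class $\tau_\Vcal\odot [\sigma_2]$ agrees with the pullback of the Thom class of $\Vcal\boxtimes \Xcal_2\to \Xcal_1\times \Xcal_2$ paired with $[\sigma_2]$, so multiplication by $\tau_\Vcal$ shifts $A$ and $B$ in the same way. \emph{Normalization:} when $\Xcal_1$ is a point, $[\sigma_1]$ is a virtual $K_1\times K_2$-representation and both sides reduce by inspection to $[\sigma_1]\otimes\indice^{K_2}_{\Xcal_2}([\sigma_2])$. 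By the Berline--Vergne uniqueness principle, any two such index-type maps obeying excision, Thom compatibility, and the correct normalization on a point must coincide.

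The main obstacle will be ensuring that the right-hand side $B$ is well-defined as an element of $\Rforc(K_1\times K_2)$ and that the tensor product produces the same distributional product of characters arising on the left. This is where the hypothesis $[\sigma_1]\in K_{K_1\times K_2}(\T_{K_1}\Xcal_1)$, as opposed to $K_{K_1\times K_2}(\T_{K_1\times K_2}\Xcal_1)$, is crucial: as recalled just before the theorem statement, it guarantees that $\indice^{K_1\times K_2}_{\Xcal_1}([\sigma_1])$ is smooth in the $K_2$-direction, with $K_1$-isotypic components $\theta(\lambda)\in R(K_2)$. Consequently each product $\theta(\lambda)\cdot \indice^{K_2}_{\Xcal_2}([\sigma_2])$ lies in $\Rforc(K_2)$ via the $R(K_2)$-module structure, and a Cauchy-product estimate shows that the resulting multiplicity function on $\what{K_1\times K_2}$ has polynomial growth. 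Once this compatibility of the two completed ring structures is under control, the axiomatic uniqueness argument outlined above closes the proof.
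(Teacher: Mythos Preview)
The paper does not give a proof of this theorem: it is stated as a known result and attributed directly to Atiyah--Singer, with the reference \cite{Atiyah74} (see the sentence immediately preceding the statement, ``Let us recall the multiplicative property of the index map for the product of manifolds that was proved by Atiyah--Singer in \cite{Atiyah74}''). So your proposal already goes well beyond what the paper supplies.

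Your axiomatic strategy is reasonable and is, in spirit, a repackaging of Atiyah's original argument. Atiyah does not invoke a Berline--Vergne style uniqueness principle (that framework came later); instead he embeds the manifolds equivariantly into representation spaces and uses Bott periodicity to reduce to the product formula over a point, which is tautological. Your three axioms (excision, Thom compatibility, normalization) encode exactly those reduction steps, so the two approaches are essentially the same idea seen from different ends. One point to be careful about: the Berline--Vergne uniqueness statement characterizes the index as a \emph{natural transformation}, i.e.\ a system of maps defined coherently for \emph{all} $K_1\times K_2$-manifolds $\Xcal_1$, not a single map for a fixed $\Xcal_1$. So when you say ``by the Berline--Vergne uniqueness principle, any two such index-type maps \ldots\ must coincide'', you should make explicit that $A$ and $B$ are defined for every $\Xcal_1$ and are compatible with equivariant open embeddings $\Xcal_1\hookrightarrow\Xcal_1'$; otherwise the reduction-to-a-point argument has no traction. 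This is easy to arrange, but it should be said. Your discussion of why the tensor product $B$ is well defined in $\Rforc(K_1\times K_2)$ (using the smoothness in the $K_2$-variable coming from $K_1$-transversal ellipticity) is correct and is indeed the key analytic point.
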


\medskip


We will use in this article the notion of {\em support} of a generalized character.

\begin{defi}
The {\em support} of $\chi:=\sum_{\mu\what{K}}a_\mu V_\mu^K\in\Rfor(K)$ is the set of 
$\mu\in\what{K}$ such that $a_\mu\neq 0$.
\end{defi}

We will say that $\chi\in\Rfor(K)$ is supported outside $B\subset\tgot^*$ if the support of $\chi$
does not intersect $B$. Note that an infinite sum $\sum_{i\in I}\chi_i$ converges in $\Rfor(K)$ if for each ball 
$$
B_r=\{\xi\in\tgot^*\ \vert\ \|\xi\|< r\}
$$ 
the set $\left\{i\in I \ \vert\ \mathrm{support}(\chi_i)\cap B_r\neq \emptyset\right\}$ 
is finite. 

\begin{defi}\label{def:O(r)}
We denote by $O(r)$ any character of $\Rfor(K)$ which is supported outside the ball $B_r$. 
\end{defi}

\subsection{Definition and first properties of $\Qcal^{\Phi}$}\label{sec:Qcal-Phi}

Let $(M,\Omega,\Phi)$ be a proper Hamiltonian $K$-manifold prequantized by an 
equivariant line bundle $L$. Let $J$ be an invariant almost complex structure 
compatible with $\Omega$. Let $p:\T M\to M$ be the projection.

Let us first describe the principal symbol  of the Dolbeault-Dirac operator $\overline{\partial}_L+\overline{\partial}_L^*$.  
The complex vector bundle $(\T^* M)^{0,1}$ is $K$-equivariantly identified 
with the tangent bundle $\T M$ equipped with the complex structure $J$.
Let $h$ be the  Hermitian structure on  $(\T M,J)$ defined by : 
$h(v,w)=\Omega(v,J w) - i \Omega(v,w)$ for $v,w\in \T M$. The symbol 
$$
\Thom(M,J)\in 
\Gamma\left(M,\hom(p^{*}(\wedge_{\C}^{even} \T M),\,p^{*}
(\wedge_{\C}^{odd} \T M))\right)
$$  
at $(m,v)\in \T M$ is equal to the Clifford map
\begin{equation}\label{eq.thom.complex}
 \clif_{m}(v)\ :\ \wedge_{\C}^{even} \T_m M
\longrightarrow \wedge_{\C}^{odd} \T_m M,
\end{equation}
where $\clif_{m}(v).w= v\wedge w - \iota(v)w$ for $w\in 
\wedge_{\C}^{\bullet} \T_{m}M$. Here $\iota(v):\wedge_{\C}^{\bullet} 
\T_{m}M\to\wedge^{\bullet -1} \T_{m}M$ denotes the 
contraction map relative to $h$. Since $\clif_{m}(v)^2=-\| v\|^2 {\rm Id}$, the map  
$\clif_{m}(v)$ is invertible for all $v\neq 0$. Hence the characteristic set 
of $\Thom(M,J)$ corresponds to the $0$-section of $\T M$.

It is a classical fact that the principal symbol  of the Dolbeault-Dirac operator 
$\overline{\partial}_{L} + \overline{\partial}^*_{L}$ is equal to\footnote{Here 
we use an identification $\T^*M\simeq \T M$ given by an invariant Riemannian metric.}  
\begin{equation}\label{eq:tau-Lambda}
\Thom(M,J)\otimes L,
\end{equation}
see \cite{Duistermaat96}.  Here also we have $\Char(\Thom(M,J)\otimes L)=0-{\rm section\ of}\ \T M$.

\begin{rem}
When the manifold $M$ is a product $M_1\times M_2$ the symbol $\Thom(M,J)\otimes L$ is equal to the product 
$\sigma_1\odot\sigma_2$ where $\sigma_k=\Thom(M_k,J_k)\otimes L_k$. 
\end{rem}

When $M$ is compact, the symbol $\Thom(M,J)\otimes L$ is elliptic and
then defines an element of the equivariant \textbf{K}-group of
$\T M$. The topological index of $\Thom(M,J)\otimes L\in\K_{K}(\T M)$ is equal to the analytical index of the
Dolbeault-Dirac operator $\overline{\partial}_L+\overline{\partial}_L^*$ :
\begin{equation}\label{eq:index-analytique-topologique}
    \Qcal_{K}(M)=\indice^K_M(\Thom(M,J)\otimes L)\quad
    \mathrm{in}\quad R(K).
\end{equation}

\medskip

When $M$ is not compact the topological index of $\Thom(M,J)\otimes L$
is not defined. In order to extend the notion of geometric quantization to this 
setting we deform the symbol $\Thom(M,J)\otimes L$ in the ``Witten'' way \cite{pep-RR,pep-ENS}. Consider the identification
$\xi\mapsto\wtde{\xi},\kgot^*\to\kgot$ defined by a $K$-invariant scalar product
on  $\kgot^*$. We define the {\em Kirwan vector field} on
$M$ : 
\begin{equation}\label{eq-kappa}
    \kappa_m= \left(\wtde{\Phi(m)}\right)_M(m), \quad m\in M.
\end{equation}

\begin{defi}\label{def:pushed-sigma}
The symbol  $\Thom(M,J)\otimes L$ pushed by the vector field $\kappa$ is the symbol $\clif^\kappa$ 
defined by the relation
$$
\clif^\kappa\vert_m(v)=\Thom(M,J)\otimes L\vert_m(v-\kappa_m)
$$
for any $(m,v)\in\T M$.
\end{defi}

Note that $\clif^\kappa\vert_m(v)$ is invertible except if
$v=\kappa_m$. If furthermore $v$ belongs to the subset $\T_K M$
of tangent vectors orthogonal to the $K$-orbits, then $v=0$ and
$\kappa_m=0$.  Indeed $\kappa_m$ is tangent to $K\cdot m$ while
$v$ is orthogonal.

Since $\kappa$ is the Hamiltonian vector field of the function
$\frac{-1}{2}\|\Phi\|^2$, the set of zeros of $\kappa$ coincides with the set
$\Cr(\|\Phi\|^2)$ of critical points of $\|\Phi\|^2$. Finally we have 
$$
\Char(\clif^\kappa)\cap \T_K M \simeq
\Cr(\|\Phi\|^2).
$$

In general $\Cr(\|\Phi\|^2)$ is not compact, so $\clif^\kappa$ does not define a transversally elliptic 
symbol on $M$. In order to define a kind of index of $\clif^\kappa$, we proceed as follows. 
For any invariant open relatively compact subset 
$U\subset M$  the set  $\Char(\clif^\kappa\vert_{U})\cap \T_K U \simeq \Cr(\|\Phi\|^2)\cap U$ is compact when  
\begin{equation}\label{eq:condition-U}
\partial \Ucal\cap \Cr(\|\Phi\|^2)=\emptyset.
\end{equation}
When (\ref{eq:condition-U}) holds we denote 
\begin{equation}\label{eq:Q-Phi-U}
\Qcal^{\Phi}_K(U):= \indice^K_{U}(\clif^\kappa\vert_{U})\quad
   \in\quad \Rforc(K)
\end{equation}
the equivariant index of the transversally elliptic symbol $\clif^\kappa\vert_{U}$.

It will be usefull to understand the dependance of the generalized character 
$\Qcal^{\Phi}_K(U)$ relatively to the data $(U,\Omega,L)$. So we consider two proper Hamilonian 
$K$-manifolds $(M,\Omega,\Phi)$ and $(M',\Omega',\Phi')$ respectively prequantized by the 
line bundles $L$ and $L'$. Let $V\subset M$ and $V'\subset M'$ two invariant open  subsets.

\begin{prop}\label{prop:psi-invariance}
$\bullet$ The generalized character $\Qcal^{\Phi}_K(U)$ does not depend of the choice of an invariant almost 
complex structure on $U$ which is compatible with $\Omega\vert_U$.

$\bullet$ Suppose that there exists an equivariant diffeomorphism $\Psi: V\to V'$ such that 
\begin{enumerate}
\item $\Psi^*(\Phi')=\Phi$,
\item $\Psi^*(L')=L$,
\item there exists an homotopy of symplectic forms taking $\Psi^*(\Omega'\vert_{V'})$  to $\Omega\vert_V$.
\end{enumerate}

Let $U'\subset \overline{U'}\subset V'$ be an invariant open relatively compact subset such that $\partial U'$ 
satisfies (\ref{eq:condition-U}). 
Take $U=\Psi^{-1}(U')$. Then $\partial U$ satisfies (\ref{eq:condition-U}) and 
$$
\Qcal^{\Phi'}_K(U')=\Qcal^{\Phi}_K(U) \quad \in \quad \Rfor(K).
$$
\end{prop}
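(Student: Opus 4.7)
The strategy for both statements is to reduce to a homotopy-invariance argument for the K-theory class of the Kirwan-deformed Thom symbol, exploiting that the Kirwan vector field $\kappa$ depends only on $\Phi$ and on the fixed invariant scalar product on $\kgot$, not on the almost complex structure or the symplectic form. For the first assertion I would use that the space of $K$-invariant $\Omega|_U$-compatible almost complex structures is contractible, so any two such $J_0, J_1$ are joined by a continuous family $(J_t)_{t\in[0,1]}$ within this space. The resulting family of pushed symbols $\sigma_t^\kappa$, built from $\Thom(U, J_t)\otimes L$ translated by $-\kappa$, is continuous in $t$, and for every $t$ one has $\Char(\sigma_t^\kappa) = \{(m,\kappa_m):m\in U\}$. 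Since $\kappa_m$ is tangent to $K\cdot m$ while any $v\in \T_K U$ is $g_t$-orthogonal to $K\cdot m$ for $g_t(\cdot,\cdot):=\Omega(\cdot, J_t\cdot)$, the intersection $\Char(\sigma_t^\kappa)\cap \T_K U$ coincides with $\Cr(\|\Phi\|^2)\cap U$, compact by~(\ref{eq:condition-U}). Hence $(\sigma_t^\kappa)$ is a homotopy of $K$-transversally elliptic symbols, and their indices agree in $\Rforc(K)$.

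\textbf{Part (2), transfer via $\Psi$.} For the second statement, I would first verify that $\partial U = \Psi^{-1}(\partial U')$ satisfies~(\ref{eq:condition-U}): the identity $\|\Phi\|^2 = \|\Phi'\|^2\circ\Psi$ on $V$ together with the equivariance of $\Psi$ yields
\[
\Cr(\|\Phi\|^2)\cap V \;=\; \Psi^{-1}\!\bigl(\Cr(\|\Phi'\|^2)\cap V'\bigr),
\]
whence $\partial U\cap \Cr(\|\Phi\|^2)=\emptyset$. Then I would pull an $\Omega'$-compatible $J'$ on $V'$ back by $\Psi$ to $\widetilde J := \Psi^*J'$ on $V$, compatible with $\widetilde\Omega:=\Psi^*\Omega'$; using $L=\Psi^*L'$ and $\Phi=\Psi^*\Phi'$, the Thom symbol, the line bundle, and the Kirwan vector field are all $\Psi$-related. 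The invariance of the equivariant index map under equivariant diffeomorphism then gives $\Qcal^{\Phi'}_K(U') = \indice^K_U(\widetilde\sigma^\kappa)$, where $\widetilde\sigma^\kappa$ is the Kirwan-pushed symbol on $U$ built from $\widetilde\Omega$, $\widetilde J$, $L$, and $\kappa$.

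\textbf{Part (2), conclusion via homotopy.} To finish, I would use the given homotopy $(\Omega_t)_{t\in[0,1]}$ of symplectic forms on $V$ from $\widetilde\Omega$ to $\Omega|_V$, and by the standard contractibility argument construct a continuous family of $K$-invariant, $\Omega_t$-compatible almost complex structures $J_t$ with $J_0=\widetilde J$. Since $\kappa$ does not depend on $(\Omega_t, J_t)$, the positivity argument from Part~(1) shows that the pushed symbols $\sigma_t^\kappa$ on $U$ have characteristic set $\Cr(\|\Phi\|^2)\cap U$ on $\T_K U$ for every $t$, hence form a homotopy of $K$-transversally elliptic symbols. Homotopy invariance of the index yields $\indice^K_U(\widetilde\sigma^\kappa) = \indice^K_U(\sigma_1^\kappa)$, and an application of Part~(1) lets one replace $J_1$ by any pre-chosen $\Omega|_V$-compatible structure, completing the proof of $\Qcal^{\Phi'}_K(U') = \Qcal^{\Phi}_K(U)$ in $\Rfor(K)$. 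The principal obstacle is the uniform control of the characteristic set as the symplectic form is deformed; this is handled precisely because $\kappa$ is defined from $\Phi$ and the $K$-action alone, so $\Char(\sigma_t^\kappa)\cap \T_K U$ is rigid throughout the deformation, and neither the failure of $\Phi$ to be a moment map for intermediate $\Omega_t$ nor the variation of the metric $g_t$ enters the argument.
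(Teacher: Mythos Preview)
Your proposal is correct and takes essentially the same route as the paper: both use contractibility of the space of compatible almost complex structures to produce a path $J_t$, observe that the Kirwan field $\kappa$ depends only on $\Phi$ so that $\Char(\sigma_t^\kappa)\cap\T_K U$ equals the compact set $\Cr(\|\Phi\|^2)\cap U$ for every $t$, and reduce the second bullet to the first via pullback by $\Psi$ together with the homotopy of symplectic forms. Two minor remarks: first, $\T_K U$ is defined using a \emph{fixed} invariant Riemannian metric, not the varying $g_t=\Omega(\cdot,J_t\cdot)$, so the orthogonality of $v\in\T_K U$ to the orbit should be read in that fixed metric---your conclusion is unaffected since $\kappa_m$ is tangent to $K\cdot m$ regardless; second, the paper makes the homotopy of symbols between \emph{fixed} bundles explicit by invoking an intertwining bundle automorphism $A$ with $AJ_0=J_1A$ (Lemma~2.2 of \cite{pep-RR}), a standard step you leave implicit when speaking of a ``continuous family'' of symbols acting on the varying bundles $\wedge^\bullet_{J_t}\T U$.
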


\begin{proof} Let us prove the first point. Let $\clif^{\kappa}_i\vert_{U}, i=0,1$ be the transversally elliptic symbols 
defined with the compatible almost complex structure $J_i, i=0,1$. Since the space of compatible almost complex structure 
is contractible, there exist an homotopy $J_t, t\in[0,1]$ of almost complex structures linking $J_0$ and $J_1$. If we use 
Lemma 2.2 in \cite{pep-RR}, we know that there exists an invertible bundle map $A\in\Gamma(U,\End(\T U))$, homotopic 
to the identity,  such that $A\circ J_0= J_1\circ A$. With the help of $A$ we prove then that 
the symbols $\clif_0^\kappa\vert_{U}$ and $\clif_1^\kappa\vert_{U}$ define the same class in 
$\K_K(\T_K U)$ (see \cite{pep-RR}[Lemma 2.2]). Hence their equivariant index coincide.

Let us prove the second point.The characters $\Qcal^{\Phi}_K(U)$ and $\Qcal^{\Phi'}_K(U')$ are computed as the 
equivariant index of the symbols $\clif^\kappa\vert_{U}$ and $\clif^{\kappa'}\vert_{U'}$. 
Let $\tilde{\clif}^\kappa\vert_{U}$ the pull back of $\clif^{\kappa'}\vert_{U'}$ by $\Psi$. Thanks to 
the point $(1)$ and $(2)$, the only thing which differs in the definitions of the 
symbols $\clif^\kappa\vert_{U}$ and $\tilde{\clif}^\kappa\vert_{U}$ are the almost complex structures $J$ and 
$\tilde J=\Psi^*(J')$ : the first one is comptible with $\Omega$ and the second one with 
$\Psi^*(\Omega'\vert_{V'})$. Since these two symplectic structure are homotopic, one sees that 
the almost complex structures $J$ and $\tilde J$ are also homotopic. So we can conclude like in the first point. 
\end{proof}

Let us recall the basic fact concerning the singular values of $\|\Phi\|^2$.
\begin{lem}
The set of singular values of $\|\Phi\|^2: M\to\R$ forms a sequence $0\leq r_1< r_2<\ldots < r_k<\ldots$  
which is finite iff $\Cr(\|\Phi\|^2)$ is compact. In the other case $\lim_{k\to\infty}r_k=\infty$.
\end{lem}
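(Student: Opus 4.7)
The plan is to use the Kirwan--Ness description of the critical set of $\|\Phi\|^2$ combined with properness of $\Phi$. A point $m\in M$ lies in $\Cr(\|\Phi\|^2)$ exactly when the Kirwan vector field $\kappa_m=(\wtde{\Phi(m)})_M(m)$ vanishes, i.e.\ when $\wtde{\Phi(m)}$ belongs to the stabiliser $\kgot_m$. Since each such critical $K$-orbit meets the Weyl chamber $C_K$ in a unique point, one obtains a subset $\Bcal\subset C_K$ and a decomposition
\begin{equation*}
\Cr(\|\Phi\|^2)=\bigsqcup_{\beta\in\Bcal}Z_\beta,\qquad Z_\beta:=K\cdot\bigl(\Phi^{-1}(\beta)\cap M^{\wtde\beta}\bigr),
\end{equation*}
on which $\|\Phi\|^2$ takes the constant value $\|\beta\|^2$. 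The set of singular values of $\|\Phi\|^2$ is therefore $\{\|\beta\|^2 : \beta\in\Bcal\}$, and the lemma reduces to a local-finiteness statement about $\Bcal$, together with the correct compactness dichotomy.

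The next step is to prove that $\Bcal\cap B_r$ is finite for every $r>0$. By properness of $\Phi$ the subset $N_r:=\Phi^{-1}(\overline{B_r})$ is a compact $K$-invariant submanifold of $M$, and
\begin{equation*}
N_r\cap\Cr(\|\Phi\|^2)=\bigsqcup_{\beta\in\Bcal,\ \|\beta\|\leq r}Z_\beta
\end{equation*}
is a closed subset of $N_r$, hence compact. The pieces $Z_\beta$ are pairwise disjoint closed subsets of this compact set, separated by the distinct values of the continuous function $\|\Phi\|^2$, so each is relatively open in the union. Compactness then forces only finitely many $\beta$'s in $B_r$ to contribute.

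With local finiteness in hand the dichotomy is immediate. If $\Cr(\|\Phi\|^2)$ is compact, it sits inside some $N_r$, so $\Bcal$ is finite and the sequence $0\leq r_1<r_2<\cdots<r_k$ of singular values terminates. Conversely, if the sequence is infinite then by local finiteness one must have $r_k\to\infty$; and in that case $\Cr(\|\Phi\|^2)$ contains nonempty pieces in arbitrarily large level sets of $\|\Phi\|^2$, so it cannot be compact.

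The main technical hurdle is the Kirwan--Ness decomposition in the non-compact setting. The identification of the critical set via vanishing of $\kappa$ is formal, but one must know that the individual strata $Z_\beta$ are closed and properly separated by the value of $\|\Phi\|^2$; this is standard once the analysis is carried out inside each compact slice $N_r$, and it is precisely the properness of $\Phi$ that lets us assemble the global statement by letting $r\to\infty$.
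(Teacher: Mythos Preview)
The paper states this lemma without proof, calling it a ``basic fact'', so there is no argument to compare against directly. Your overall strategy---reduce everything to local finiteness of $\Bcal$ via properness of $\Phi$, then read off the dichotomy---is the right one, and your final paragraph is fine once local finiteness is in hand.

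The gap is in your argument that $\Bcal\cap B_r$ is finite. You assert that the closed pieces $Z_\beta$ inside the compact set $N_r\cap\Cr(\|\Phi\|^2)$ are ``separated by the distinct values of the continuous function $\|\Phi\|^2$'' and hence relatively open in the union. But distinct $\beta,\beta'\in C_K$ may satisfy $\|\beta\|=\|\beta'\|$, so $\|\Phi\|^2$ need not separate $Z_\beta$ from $Z_{\beta'}$. If instead you use the continuous map $m\mapsto K\cdot\Phi(m)\in \kgot^*/K\simeq C_K$, which \emph{does} distinguish the $Z_\beta$, then showing each $Z_\beta$ is open in the critical set amounts to showing that $\Bcal$ is discrete---precisely what you are trying to prove. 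A compact space can be a disjoint union of infinitely many nonempty closed subsets (e.g.\ $\{0\}\cup\{1/n:n\geq 1\}$), so compactness by itself does not force finiteness.

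What is missing is a structural input: on the compact set $N_r$ the $K$-action has only finitely many isotropy types (slice theorem), hence only finitely many subtori $T'\subset K$ arise as the closure of $\{\exp t\wtde\beta\}$; for each such $T'$ the fixed locus $M^{T'}\cap N_r$ has finitely many components, and on each component the $\tgot'$-part of $\Phi$ is constant (since $d\langle\Phi,X\rangle=-\iota(X_M)\Omega=0$ there). This pins down the possible $\beta$'s to a finite set. The paper's remark just after the lemma, that ``it is easy to see that $\Bcal\cap B_r$ is finite'', is presuming this standard Kirwan-type reasoning rather than the purely topological compactness argument you gave.
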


At each regular value $R$ of $\Cr(\|\Phi\|^2)$, we associate the invariant open subset 
$M_{< R}:=\{\|\Phi\|^2<R\}$ which satisfies (\ref{eq:condition-U}). The restriction $\clif^\kappa\vert_{M_{< R}}$ defines then 
a transversally elliptic symbol on $M_{< R}$: let $\Qcal^{\Phi}_K(M_{< R})$ be its equivariant index.

 Let us show that $\Qcal^{\Phi}_K(M_{< R})$ has a limit when $R\to\infty$.
The set $\Cr(\|\Phi\|^2)$ has the following decomposition
\begin{equation}\label{eq:crit-Phi}
\Cr(\|\Phi\|^2)=\bigcup_{\beta\in\Bcal} \ \underbrace{K\cdot (M^{\wtde{\beta}}\cap \Phi^{-1}(\beta))}_{Z_\beta}
\end{equation}
where the $\Bcal$ is a subset of the Weyl chamber $\tgot^*_+$. Note that each part $Z_\beta$ is compact, 
hence $\Bcal$ is  finite only if $\Cr(\|\Phi\|^2)$ is compact. When $\Cr(\|\Phi\|^2)$ is non-compact, 
the set $\Bcal$ is infinite, but it is easy to see that $\Bcal\cap B_r$  
is finite for any $r\geq 0$. For any $\beta\in \Bcal$, we consider a relatively compact open 
invariant neighborhood 
$\Ucal_\beta$ of  $Z_\beta$ such that 
$\Cr(\|\Phi\|^2)\cap \overline{\Ucal_\beta}= Z_\beta$. 

\begin{defi}\label{def:Q-beta}
We denote 
$$
\Qcal^{\beta}_K(M)\in\Rforc(K)
$$
the index\footnote{The index of $\clif^\kappa\vert_{\Ucal_\beta}$ was denoted $RR^{^K}_\beta(M,L)$ in \cite{pep-RR}.} 
of the transversally elliptic symbol $\clif^\kappa\vert_{\Ucal_\beta}$.
\end{defi}

A simple application of the excision property \cite{pep-RR} gives that 
\begin{equation}\label{eq:Q-Phi-R}
\Qcal^{\Phi}_K(M_{< R}):\
= \sum_{\|\beta\|^2< R} \Qcal^{\beta}_K(M).
\end{equation}

\medskip

We have now the key fact 

\begin{theo}\label{prop:Q-beta-support}
The generalized character $\Qcal^{\beta}_K(M)$ is supported outside the 
open ball $B_{\|\beta\|}$. 
\end{theo}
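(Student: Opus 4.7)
The strategy is to compute $\Qcal^{\beta}_K(M)$ by a local model near $Z_\beta$, exploit the multiplicative property of the transversally elliptic index to reduce the computation to an induced character from the stabilizer $K_\beta$, and then track the weights explicitly using the Kostant-Souriau formula.

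\textbf{Step 1 (Local model near $Z_\beta$).} I would use the Hamiltonian normal form near the coadjoint orbit $K\cdot \beta$: a neighborhood of $Z_\beta$ in $M$ is equivariantly symplectomorphic to a neighborhood of the zero section in $K\times_{K_\beta} (\mathfrak{k}/\mathfrak{k}_\beta)^*\times \Ncal$, where $\Ncal$ is a $K_\beta$-equivariant model built from $M^{\wtde{\beta}}\cap \Phi^{-1}(\beta)$ together with the normal bundle to $M^{\wtde{\beta}}$ inside $M$. Proposition \ref{prop:psi-invariance} lets me replace $\Ucal_\beta$ by its image in this model without changing $\Qcal^\beta_K(M)$.

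\textbf{Step 2 (Induction).} Using the Multiplicative Property (Theorem \ref{theo:multiplicative-property}) on the product structure of the local model, together with the standard identification of the transversally elliptic index on $K\times_{K_\beta}Y$ with induction from $K_\beta$ to $K$, I would write
\[
\Qcal^\beta_K(M)=\indH\Bigl(\Qcal^\beta_{K_\beta}(\Ncal)\Bigr),
\]
where $\indH$ is the holomorphic induction from $K_\beta$ to $K$ (in the sense of \cite{pep-RR}). Because induction takes a $K_\beta$-weight $\nu$ to a sum of $K$-weights $\mu$ with $\langle \mu,\beta\rangle=\langle \nu,\beta\rangle$ (the pairing with $\beta\in\mathfrak{k}_\beta^*$ is preserved), it suffices to prove the corresponding statement on the $K_\beta$ side: every weight $\nu$ appearing in $\Qcal^\beta_{K_\beta}(\Ncal)$ satisfies $\langle \nu,\beta\rangle\geq\|\beta\|^2$.

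\textbf{Step 3 (Weight estimate on the $K_\beta$-side).} I choose the invariant almost complex structure $J$ so that every weight of the infinitesimal $\wtde{\beta}$-action on the normal bundle to $M^{\wtde{\beta}}$ (viewed with $J$) is strictly positive. The symbol $\clif^\kappa\vert_{\Ucal_\beta}$ decomposes, via the Clifford action along this normal bundle, as a product $\Thom(M^{\wtde{\beta}},J)\otimes L\vert_{M^{\wtde{\beta}}}\otimes \wedge_\C^\bullet \mathcal{N}$ (pushed by $\kappa$). Evaluating the character of $\wtde{\beta}$ at a point of $\Phi^{-1}(\beta)$ and using the Kostant-Souriau identity (\ref{eq:kostant-L}), the line bundle $L$ contributes the weight $\langle \beta,\wtde{\beta}\rangle=\|\beta\|^2$, while the factor $\wedge_\C^\bullet \Ncal$ contributes weights $\alpha$ with $\langle \alpha,\wtde{\beta}\rangle \geq 0$ by the positivity above. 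Therefore every weight $\nu$ appearing in $\Qcal^\beta_{K_\beta}(\Ncal)$ satisfies $\langle \nu,\wtde{\beta}\rangle\geq \|\beta\|^2$, i.e.\ $\langle \nu,\beta\rangle\geq \|\beta\|^2$.

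\textbf{Step 4 (Cauchy-Schwarz).} Combining Steps 2 and 3, for any dominant $\mu\in\what{K}$ with nonzero multiplicity in $\Qcal^\beta_K(M)$ we get $\langle \mu,\beta\rangle\geq \|\beta\|^2$. By the Cauchy-Schwarz inequality this forces $\|\mu\|\geq\|\beta\|$, so $\mu\notin B_{\|\beta\|}$, which is the statement.

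The main obstacle is Step 3: one must be careful that pushing the symbol by $\kappa$ does not affect the weight count, that the almost complex structure can indeed be chosen with the required positivity on the normal bundle to $M^{\wtde{\beta}}$, and that the contribution of the transverse polarized factor is a genuine ``positive'' character in $\Rforc(K_\beta)$ after the shift by $\|\beta\|^2$ coming from the line bundle. Once these local positivity facts are established (as in \cite{pep-RR}), Steps 1, 2 and 4 are essentially formal consequences of the multiplicative property and of properties of holomorphic induction.
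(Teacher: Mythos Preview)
Your overall strategy — reduce to the stabilizer $K_\beta$ via an induction formula, establish the weight bound $(\nu,\beta)\geq\|\beta\|^2$ on the $K_\beta$-side by polarizing the normal contribution, and finish with Cauchy--Schwarz — is exactly the route taken in the paper. Steps~1 and~3 are essentially correct and correspond to the paper's use of the symplectic slice $\Ycal_\sigma$ together with the localization formula and the $\beta$-polarized inverse $[\wedge_\C^\bullet\overline{\Ncal}]^{-1}_\beta$ from \cite{pep-RR}.

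There is however a genuine gap in Step~2. The assertion that holomorphic induction ``takes a $K_\beta$-weight $\nu$ to a sum of $K$-weights $\mu$ with $\langle\mu,\beta\rangle=\langle\nu,\beta\rangle$'' is false: $\Hols(V^{K_\beta}_\nu)=\pm V^K_{\lambda}$ with $\lambda=\omega(\nu+\rho)-\rho$ for some $\omega\in W$, and in general $(\lambda,\beta)\neq(\nu,\beta)$ once $\omega\notin W_\beta$. Consequently your Step~4 conclusion $(\mu,\beta)\geq\|\beta\|^2$ does not follow from Step~3 as you claim. The paper circumvents this by bounding $\|\lambda\|$ directly rather than $(\lambda,\beta)$: since $\|\lambda\|=\|\omega(\nu+\rho)-\rho\|=\|\nu+\rho-\omega^{-1}\rho\|$, Cauchy--Schwarz against $\beta/\|\beta\|$ gives
\[
\|\lambda\|\ \geq\ \Big(\nu+\rho-\omega^{-1}\rho,\ \tfrac{\beta}{\|\beta\|}\Big)\ =\ \tfrac{(\nu,\beta)}{\|\beta\|}+\tfrac{(\rho-\omega^{-1}\rho,\beta)}{\|\beta\|}\ \geq\ \|\beta\|,
\]
using $(\nu,\beta)\geq\|\beta\|^2$ from the slice computation and $(\rho-\omega^{-1}\rho,\beta)\geq 0$, which holds because $\rho-\omega^{-1}\rho$ is a nonnegative sum of positive roots and $\beta$ lies in the closed Weyl chamber. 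This $\rho$-shift positivity is the missing ingredient; once you insert it in place of the incorrect ``pairing preserved'' claim, your argument becomes the paper's.
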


\medskip

\begin{proof}
Proposition \ref{prop:Q-beta-support} follows directly from the computations done in \cite{pep-RR}. 
First consider the case where $\beta\neq 0$ is a $K$-invariant element of $\Bcal$. 
Let $i: \tore_\beta \croc T$ be the compact torus generated by $\beta$. If $F$ is $\Z$-module 
we denote $F\widehat{\otimes}\, \Rfor(\tore_{\beta})$ the $\Z$-module formed by 
the infinite formal sums $\sum_{a} E_{a}\, h^a$ taken over the set of weights of $\tore_{\beta}$, 
where $E_{a}\in F$ for every $a$.

Since $\tore_\beta$ lies in the center of $K$, the morphism $\pi:(k,t)\in K\times \tore_\beta \mapsto 
kt\in K$ induces a map $\pi^*:\Rfor(K)\to \Rfor(K)\,\widehat{\otimes}\, \Rfor(\tore_{\beta})$.

The normal bundle $\Ncal$ of $M^{\wtde{\beta}}$ in 
$M$ inherits a canonical complex structure $J_{\Ncal}$ on the fibers.  We denote by 
$\overline{\Ncal}\to M^{\wtde{\beta}}$ the complex vector bundle with the 
opposite complex structure.  The torus $\tore_{\beta}$ is included in 
the center of $K$, so the bundle $\overline{\Ncal}$ and the virtual 
bundle $\wedge_{\C}^{\bullet}\overline{\Ncal}:=\wedge_{\C}^{even} 
\overline{\Ncal}\stackrel{0}{\to}\wedge_{\C}^{odd}\overline{\Ncal}$ 
carry a $K\times \tore_{\beta}$-action: they can be considered as 
elements of $K_{K\times \tore_{\beta}}(M^{\wtde{\beta}})\, = 
\,K_{K}(M^{\wtde{\beta}})\otimes R(\tore_{\beta})$.  


In \cite{pep-RR}, we have defined an inverse of 
$\wedge_{\C}^{\bullet}\overline{\Ncal}$,
$\left[ \wedge_{\C}^{\bullet} \overline{\Ncal}\,\right]^{-1}_{\beta}\in 
K_{K}(M^{\wtde{\beta}})\,\widehat{\otimes}\, \Rfor(\tore_{\beta})$, which is 
polarized by $\beta$.  It means that $\left[ \wedge_{\C}^{\bullet} 
\overline{\Ncal}\,\right]^{-1}_{\beta}= \sum_{a} N_{a}\, h^a$ with 
$N_{a}\neq 0$ only if $\langle a, \beta\rangle\geq 0$.  

We prove in \cite{pep-RR} the following localization formula :
\begin{equation}\label{eq.localisation.2}
\pi^*\left[\Qcal^{\beta}_K(M)\right]=RR^{^{K\times \tore_{\beta}}}_{\beta}
\left(M^{\wtde{\beta}},L_{\vert M^{\beta}}\otimes 
\left[ \wedge_{\C}^{\bullet} \overline{\Ncal}\,\right]^{-1}_{\beta}\right)\ ,
\end{equation}
as an equality in $R^{-\infty}(K)\,\widehat{\otimes}\, \Rfor(\tore_{\beta})$. With 
(\ref{eq.localisation.2}) in hand, it is easy to see that $V_\mu^K$ occurs 
in the character $\Qcal^{\beta}_K(M)$ only  if $(\mu,\beta)\geq \|\beta\|^2$ (See Lemma 9.4 in 
\cite{pep-RR}). 

\medskip

Now we consider the case were $\beta\in\Bcal$ is not a $K$-invariant element. Let $\sigma$ be the 
unique open face of the Weyl chamber $\tgot^*_+$ which contains $\beta$. Let $K_\sigma$ be the 
corresponding stabilizer subgroup. We consider the symplectic slice $\Ycal_\sigma\subset M$: 
it is a $K_\sigma$ invariant Hamiltonian submanifold of $M$ which is prequantized by the line bundle 
$L\vert_{\Ycal_\sigma}$.  The restriction of $\Phi$ to $\Ycal_\sigma$ is a moment map 
$\Phi_\sigma:\Ycal_\sigma\to\kgot_\sigma^*$ which is proper in a neighborhood of 
$\beta\in\kgot_\sigma^*$. The set 
$$
K_\sigma\cdot(\Ycal_\sigma^{\wtde{\beta}}\cap\Phi_\sigma^{-1}(\beta))=M^{\wtde{\beta}}\cap\Phi^{-1}(\beta)
$$
is a component of $\Cr(\|\Phi_\sigma\|^2)$. Let $\Qcal^{\beta}_{K_\sigma}(\Ycal_\sigma)\in\Rforc(K_\sigma)$
be the corresponding character (see Definition \ref{def:Q-beta}). 

We prove in \cite{pep-RR}[Section 7], the following induction formula: 
\begin{equation}\label{eq:induction-slice}
\Qcal^{\beta}_{K}(M)=\Hols\left(\Qcal^{\beta}_{K_\sigma}(\Ycal_\sigma)\right)
\end{equation}
where $\Hols: \Rfor(K_\sigma)\to\Rfor(K)$ is the holomorphic induction map. See the Appendix in \cite{pep-RR} 
for the definition and properties of these induction maps. 

We know from the previous case that 
$$
\Qcal^{\beta}_{K_\sigma}(\Ycal_\sigma)=\sum_{\mu\in\what{K_\sigma}} m_\mu V^{K_\sigma}_\mu
$$
where $m_\mu\neq 0\Longrightarrow (\mu,\beta)\geq \|\beta\|^2$. Then, with 
(\ref{eq:induction-slice}), we get 
\begin{eqnarray*}
\Qcal^{\beta}_{K_\sigma}(\Ycal_\sigma)&=&\sum_{(\mu,\beta)\geq \|\beta\|^2}
m_\mu\ \Hols(V^{K_\sigma}_\mu)\\
&=&\sum_{(\mu,\beta)\geq \|\beta\|^2}m_\mu \ \HolT(t^\mu),
\end{eqnarray*}
where $\HolT: \Rfor(T)\to\Rfor(K)$ is the holomorphic induction map.
 
Let $\rho$ be half the sum of the positive roots. The term $\HolT(t^\mu)$ is equal to $0$ when 
$\mu+\rho$ is not a regular element of $\tgot^*$. When $\mu+\rho$ is a regular element of $\tgot^*$, 
we have $\HolT(t^\mu)=(-1)^{|\omega|} V^K_{\mu_\omega}$ where 
$$
\mu_\omega=\omega(\mu+\rho)-\rho
$$
is dominant for a unique $\omega\in W$. 

Finally, a representation $V^K_{\lambda}$ appears in the character $\Qcal^{\beta}_{K}(M)$ only 
if $\lambda= \mu_\omega$ for a weight $\mu$ satisfying  $(\mu,\beta)\geq \|\beta\|^2$. 
Hence, for such $\lambda$, we have
\begin{eqnarray*}
 \|\lambda\|&=&  \|\mu+\rho-\omega^{-1}\rho\|\\
 &\geq&(\mu+\rho-\omega^{-1}\rho,\frac{\beta}{\|\beta\|})\\
 &\geq& \|\beta\|.
 \end{eqnarray*}
In the last inequality we use that $(\rho-\omega^{-1}\rho,\beta)\geq 0$ 
since $\rho-\omega^{-1}\rho$ is a sum of positive roots, and $\beta\in\tgot^*_+$.

\end{proof}

\medskip

 With the help of Theorem \ref{prop:Q-beta-support} and decomposition (\ref{eq:Q-Phi-R}), 
we see that the multiplicity of $V_\gamma^K$ in $\Qcal^{\Phi}_K(M_{< R})$ does not depend on 
the regular value $R>\|\gamma\|^2$. We can refine the constant $c_\gamma$ appearing in 
\cite{Ma-Zhang}[Theorem 0.1]: take $c_\gamma$ equal to $\|\gamma\|^2$ instead of\footnote{Here $\rho$ is half the sum 
of the positive roots. Hence  $\|\gamma+\rho\|^2-\|\rho\|^2-\|\gamma\|^2=2(\rho,\gamma)\geq 0$ and $(\rho,\gamma)=0$ 
only if the weight $\gamma$ belongs to the center of $\kgot\simeq\kgot^*$.}
$\|\gamma+\rho\|^2-\|\rho\|^2\geq \|\gamma\|^2$.

\begin{defi}\label{def:Q-Phi}
The generalized character $\Qcal^{\Phi}_K(M)$ is defined as the limit in $\Rfor(K)$ of 
$\Qcal^{\Phi}_K(M_{< R})$  when $R$ goes to infinity. In other 
words
\begin{equation}\label{eq:Q-Q-beta}
\Qcal^{\Phi}_K(M)= \sum_{\beta\in\Bcal} \Qcal^{\beta}_K(M).
\end{equation}
\end{defi}

Note that for any regular value $R$ of $\|\Phi\|^2$ we have the useful relation 
\begin{equation}\label{eq-Qcal-Phi-R}
\Qcal^{\Phi}_K(M)=\Qcal^{\Phi}_K(M_{<R}) +O(\sqrt{R}).
\end{equation}

\subsection{Quantization of a symplectic quotient}\label{subsec:symplectic-quotient}

We will now explain how we define the geometric quantization of
\emph{singular} compact Hamiltonian manifolds : here ``singular"
means that the manifold is obtained by symplectic reduction.

Let $(N,\Omega)$ be a smooth symplectic manifold equipped with a
Hamiltonian action of $K_1\times K_2$ : we denote $(\Phi_1,\Phi_2): N\to
\kgot_1^*\times\kgot_2^*$ the corresponding moment map. We assume that
$N$ is pre-quantized by a $K_1\times K_2$-equivariant line bundle $L$
and we suppose that the map $\Phi_1$ is \textbf{proper}. One wants
to define the geometric quantization of the (compact) symplectic quotient
$$
N{/\!\!/}_{0}K_1:=\Phi_1^{-1}(0)/K_1.
$$

Let $\kappa_1$ be the Kirwan vector field attached to the moment map $\Phi_1$.  We denote  by 
$\clif^{\kappa_1}$ the symbol $\Thom(N,J)\otimes L$ pushed by the vector field $\kappa_1$. For any regular value 
$R_1$ of $\|\Phi_1\|^2$, we consider the restriction $\clif^{\kappa_1}\vert_{N_{<R_1}}$ to the invariant, 
 open subset $N_{< R_1}:=\{\|\Phi_1\|^2< R_1\}$. The symbol $\clif^{\kappa_1}\vert_{N_{< R_1}}$ is 
$K_1\times K_2$-equivariant and $K_1$-transversally elliptic, hence we can consider its index 
$$
\indice^{K_1\times K_2}_{N_{< R_1}}(\clif^{\kappa_1}\vert_{N_{< R_1}})\in \Rfor(K_1\times K_2).
$$
which is smooth relatively to the parameter in $K_2$.  We consider the following extension of Definition 
\ref{def:Q-Phi}.

\begin{defi}\label{def:quant-phi-1}
The generalized character $\Qcal^{\Phi_1}_{K_1\times K_2}(N)$ is defined as the limit in 
$\Rfor(K_1\times K_2)$ of 
$\indice^{K_1\times K_2}_{N_{< R_1}}(\clif^{\kappa_1}\vert_{N_{< R_1}})$  when $R_1$ goes to infinity. 
\end{defi}

Here $\Cr(\|\Phi_1\|^2)$ is equal to the disjoint union of the compact $K_1\times K_2$-invariant 
subsets  $Z_{\beta_1}:= K_1\cdot (M^{\wtde{\beta_1}}\cap \Phi_1^{-1}(\beta_1))$, $\beta_1\in\Bcal_1$.  
For $\beta_1\in \Bcal_1$, we consider an invariant relatively compact open subset $\Ucal_{\beta_1}$ such that:
$Z_{\beta_1}\subset \Ucal_{\beta_1}$ and $Z_{\beta_1}= \Cr(\|\Phi_1\|^2)\cap\overline{\Ucal_{\beta_1}}$. Let 
$\Qcal^{\beta_1}_{K_1\times K_2}(N)\in \Rfor(K_1\times K_2)$ be the equivariant index of the $K_1$-transversally 
elliptic symbol $\clif^L_{\kappa^1}\vert_{\Ucal_{\beta_1}}$.  The $K_1$-transversallity condition imposes that 
$\Qcal^{\beta_1}_{K_1\times K_2}(N)=\sum_{\lambda}\theta^{\beta_1}(\lambda)\otimes V_{\lambda}^{K_1}$
with 
$$
\theta^{\beta_1}(\lambda)\in R(K_2),\quad \forall\lambda\in\what{K_1}.
$$


We have the following extension of Theorem \ref{prop:Q-beta-support} 

\begin{theo}\label{prop:Q-beta-support-1}
We have $\Qcal^{\beta_1}_{K_1\times K_2}(N)=\sum_{\lambda\in\what{K_1}}\theta^{\beta_1}(\lambda)
\otimes V_{\lambda}^{K_1}$ where $\theta^{\beta_1}(\lambda)\neq 0$ only if 
$\|\lambda\|\geq \|\beta_1\|$.
\end{theo}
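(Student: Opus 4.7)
The plan is to transport the proof of Theorem~\ref{prop:Q-beta-support} verbatim to the $K_1\times K_2$-equivariant setting, letting the $K_2$-action play a purely passive role. The whole point is that $\beta_1\in\kgot_1^*$ is a $K_1$-object and the support constraint we seek is a constraint on the $K_1$-type $V^{K_1}_\lambda$ only, so every localization/induction step from \cite{pep-RR} can be carried out in the category of $K_1\times K_2$-equivariant (and $K_1$-transversally elliptic) symbols, keeping track of the $K_2$-equivariance formally in an $R(K_2)$-coefficient.

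\textbf{Step 1: case where $\beta_1$ is $K_1$-invariant.} The torus $\tore_{\beta_1}\subset K_1$ generated by $\beta_1$ lies in the center of $K_1$, hence also commutes with $K_2$; so $\tore_{\beta_1}$ is central in $K_1\times K_2$. The fixed-point manifold $N^{\wtde{\beta_1}}$ and its normal bundle $\Ncal_1$ are $K_1\times K_2$-equivariant, and $\Ncal_1$ carries a canonical complex structure, hence is an element of $K_{K_1\times K_2\times\tore_{\beta_1}}(N^{\wtde{\beta_1}})$. The polarized inverse $[\wedge^\bullet_\C\overline{\Ncal_1}]^{-1}_{\beta_1}$ of \cite{pep-RR} is defined in the same way in $K_{K_1\times K_2}(N^{\wtde{\beta_1}})\,\widehat\otimes\,\Rfor(\tore_{\beta_1})$, and gives the localization formula
\[
\pi^*\bigl[\Qcal^{\beta_1}_{K_1\times K_2}(N)\bigr]=RR^{K_1\times K_2\times\tore_{\beta_1}}_{\beta_1}\!\Bigl(N^{\wtde{\beta_1}},\,L|_{N^{\wtde{\beta_1}}}\otimes[\wedge^\bullet_\C\overline{\Ncal_1}]^{-1}_{\beta_1}\Bigr).
\]
The polarization of the right-hand side forces each $\tore_{\beta_1}$-weight $\mu$ that occurs to satisfy $(\mu,\beta_1)\geq\|\beta_1\|^2$. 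Since $\tore_{\beta_1}\subset T_1\subset K_1$, a representation $V^{K_1}_\lambda\otimes V^{K_2}_\tau$ contributing a non-zero coefficient must have $(\lambda,\beta_1)\geq\|\beta_1\|^2$ on its highest weight, and therefore $\|\lambda\|\geq\|\beta_1\|$.

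\textbf{Step 2: general $\beta_1\in\Bcal_1$.} Let $\sigma_1$ be the open face of the $K_1$-Weyl chamber containing $\beta_1$, with stabilizer $K_{\sigma_1}\subset K_1$. The $K_2$-action preserves the symplectic slice $\Ycal_{\sigma_1}\subset N$ because it commutes with the $K_1$-action, so $\Ycal_{\sigma_1}$ is a $K_{\sigma_1}\times K_2$-Hamiltonian submanifold prequantized by $L|_{\Ycal_{\sigma_1}}$, and the restriction $\Phi_1|_{\Ycal_{\sigma_1}}$ is proper near $\beta_1$. The induction formula \eqref{eq:induction-slice} extends:
\[
\Qcal^{\beta_1}_{K_1\times K_2}(N)=\bigl(\Hols\otimes\mathrm{Id}_{R(K_2)}\bigr)\Bigl(\Qcal^{\beta_1}_{K_{\sigma_1}\times K_2}(\Ycal_{\sigma_1})\Bigr),
\]
because the construction of $\Hols$ in \cite{pep-RR} is carried out fibrewise on induced bundles and commutes with tensoring by a $K_2$-representation. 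By Step~1 applied to $(K_{\sigma_1}\times K_2,\beta_1)$,
\[
\Qcal^{\beta_1}_{K_{\sigma_1}\times K_2}(\Ycal_{\sigma_1})=\sum_{(\mu,\beta_1)\geq\|\beta_1\|^2}V^{K_{\sigma_1}}_\mu\otimes\theta_\mu,\qquad\theta_\mu\in R(K_2).
\]
Applying $\Hols$ and then $\HolT$ as in the original argument, $V^{K_1}_\lambda\otimes\theta_\mu$ appears only when $\lambda=\omega(\mu+\rho)-\rho$ for some $\omega$ in the Weyl group of $K_1$ with $\mu+\rho$ regular; the calculation
\[
\|\lambda\|=\|\mu+\rho-\omega^{-1}\rho\|\geq\bigl(\mu+\rho-\omega^{-1}\rho,\tfrac{\beta_1}{\|\beta_1\|}\bigr)\geq\|\beta_1\|,
\]
using $(\rho-\omega^{-1}\rho,\beta_1)\geq 0$, concludes the proof.

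\textbf{Main obstacle.} The one point that requires care is that the original localization and induction formulas in \cite{pep-RR} are proved for the $K$-equivariant index of fully transversally elliptic symbols. Here we work with $K_1$-transversally elliptic, $K_1\times K_2$-equivariant symbols, whose index lies in $R(K_2)\,\widehat\otimes\,\Rfor(K_1)$ rather than in $\Rforc(K)$. One has to verify that the excision, deformation, and multiplicative arguments used in \cite{pep-RR} to establish \eqref{eq.localisation.2} and \eqref{eq:induction-slice} go through with an inert $K_2$-action; this is routine because each step is an identity between classes in equivariant $K$-theory that is compatible with the multiplication by $R(K_2)$, but it does require writing out the $K_2$-parametrized versions of the relevant symbols and deformations.
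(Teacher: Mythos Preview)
Your proposal is correct and follows exactly the approach the paper intends: the paper's own proof reads in its entirety ``The proof works exactly like the one of Theorem~\ref{prop:Q-beta-support},'' and what you have written is a careful spelling-out of that transport, including the same two-step split (central $\beta_1$ via abelian localization, general $\beta_1$ via the symplectic slice and holomorphic induction) and the same final Weyl-group estimate. Your closing remark about checking that the arguments of \cite{pep-RR} go through for $K_1$-transversally elliptic, $K_1\times K_2$-equivariant symbols is exactly the point the paper is tacitly asserting.
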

\begin{proof}The proof works exactly like the one of Theorem \ref{prop:Q-beta-support}.
\end{proof}


Let us explain the ``quantization commutes with reduction theorem", or why we 
can consider the geometric quantization of 
$$
N{/\!\!/}_{0}K_1:=\Phi_1^{-1}(0)/K_1
$$ 
as the $K_1$-invariant part of $\Qcal^{\Phi_1}_{K_1\times K_2}(N)$.

Let us first suppose that $0$ is a regular value of $\Phi_1$. Then  $N{/\!\!/}_{0}K_1$ is a
compact symplectic \emph{orbifold} equipped with a Hamiltonian
action of $K_2$ : the corresponding moment map is induced by the
restriction of $\Phi_2$ to $\Phi_1^{-1}(0)$. The symplectic quotient
$N{/\!\!/}_{0}K_1$ is pre-quantized by the line orbibundle
$$
L_0:= \left(L|_{\Phi_1^{-1}(0)}\right)/K_1.
$$
Definition \ref{def:quant-compact-lisse} extends to the orbifold
case. We can still define the geometric quantization of $N{/\!\!/}_{0}K_1$  
as the index of an elliptic operator : we denote it by 
$\Qcal_{K_2}(N{/\!\!/}_{0}K_1)\in R(K_2)$. We have 

\begin{theo}\label{theo:Q-R-lisse}
If $0$ is a regular value of $\Phi_1$, the $K_1$-invariant part of 
$\Qcal^{\Phi_1}_{K_1\times K_2}(N)$ is equal to $\Qcal_{K_2}(N{/\!\!/}_{0}K_1)\in R(K_2)$.
\end{theo}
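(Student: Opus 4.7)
The plan is to proceed in three steps: reduce to the critical component at $\beta_1=0$, transfer the index computation to a symplectic local normal form around $\Phi_1^{-1}(0)$, and factorize the pushed symbol to invoke the multiplicative property.

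\emph{Step 1.} I would first expand $\Qcal^{\Phi_1}_{K_1\times K_2}(N)=\sum_{\beta_1\in\Bcal_1}\Qcal^{\beta_1}_{K_1\times K_2}(N)$ along Definition \ref{def:quant-phi-1} and apply Theorem \ref{prop:Q-beta-support-1}: the coefficient $\theta^{\beta_1}(0)\in R(K_2)$ of the trivial $K_1$-representation $V^{K_1}_0$ in $\Qcal^{\beta_1}_{K_1\times K_2}(N)$ must vanish whenever $\|\beta_1\|>0$, hence
$$[\Qcal^{\Phi_1}_{K_1\times K_2}(N)]^{K_1}=[\Qcal^{0}_{K_1\times K_2}(N)]^{K_1},$$
with $\Qcal^{0}_{K_1\times K_2}(N)$ the equivariant index of $\clif^{\kappa_1}|_{\Ucal_0}$ for an invariant relatively compact neighborhood $\Ucal_0$ of $P:=\Phi_1^{-1}(0)$ satisfying $\Cr(\|\Phi_1\|^2)\cap\overline{\Ucal_0}=P$.

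\emph{Step 2.} Since $0$ is a regular value, $P$ is a smooth $K_1\times K_2$-submanifold and $K_1$ acts locally freely on it. The equivariant Marle--Guillemin--Sternberg normal form yields a $K_1\times K_2$-equivariant symplectomorphism between a tubular neighborhood of $P$ in $N$ and a $K_1\times K_2$-invariant neighborhood of $P\times\{0\}$ in the linear model $P\times\kgot_1^*$, where $K_1$ acts diagonally (coadjointly on $\kgot_1^*$), $\Phi_1$ corresponds to projection onto $\kgot_1^*$ to leading order, and $L$ corresponds to the pullback of $L_0:=L|_P/K_1$ along $P\times\kgot_1^*\to M_0:=N{/\!\!/}_0 K_1$. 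By Proposition \ref{prop:psi-invariance} the character $\Qcal^{0}_{K_1\times K_2}(N)$ is unchanged when computed in this model. A $K_1$-invariant connection on $P\to M_0$ splits $T(P\times\kgot_1^*)|_{P\times\{0\}}$ into a horizontal part $\pi^*TM_0$ and a vertical part $\kgot_1\oplus\kgot_1^*\cong\T^*K_1$. In these coordinates, $\kappa_1$ vanishes on the zero section and, to first order in the fiber coordinate $\xi\in\kgot_1^*$, equals the fundamental vector field $\widetilde{\xi}_P$ lying in the $\kgot_1$-direction. A linear homotopy in $\xi$, together with a suitable cutoff, deforms $\clif^{\kappa_1}$ through $K_1$-transversally elliptic symbols into the twisted external product of the elliptic Thom symbol $\Thom(M_0,J_0)\otimes L_0$ on the compact base $M_0$ with the pushed Thom symbol $\sigma_{K_1}$ on the fiber $\T^*K_1$. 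The fiber-bundle version of Theorem \ref{theo:multiplicative-property} then gives
$$\Qcal^{0}_{K_1\times K_2}(N)=\Qcal_{K_2}(M_0)\otimes\indice^{K_1}_{\T^*K_1}(\sigma_{K_1}),$$
and the model case of (\ref{eq:Q-Phi-G-G/H}) with $H=\{e\}$ identifies the second factor with the regular representation $L^2(K_1)$, whose $K_1$-invariant part is $\C$. Taking $K_1$-invariants yields $\Qcal_{K_2}(N{/\!\!/}_0 K_1)$.

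\emph{Main obstacle.} The hard part will be carrying out the $K_1$-transversally elliptic homotopy of Step 2 with enough control that the characteristic set stays compact inside $\T_{K_1}(P\times\kgot_1^*)$; this compactness relies on the fact that $\widetilde{\xi}_P\neq 0$ for $\xi\neq 0$, which is precisely the local freeness of $K_1$ on $P$. One also has to justify the fiber-bundle extension of the multiplicative property in this non-product setting, using the connection splitting to reduce locally to a genuine product $M_0\times\T^*K_1$. A secondary technicality is the orbifold case, where $K_1$ acts only with finite stabilizers on $P$: one replaces the principal bundle $P\to M_0$ by an orbifold principal bundle, and the factorization and its $K_1$-invariant part then yield the orbifold Dolbeault--Dirac index $\Qcal_{K_2}(N{/\!\!/}_0 K_1)$ without further modification.
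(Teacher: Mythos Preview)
Your Step 1 is exactly the argument the paper gives: decompose along $\Bcal_1$ and use Theorem \ref{prop:Q-beta-support-1} to kill the $K_1$-invariants of every term with $\beta_1\neq 0$, reducing to $[\Qcal^{0}_{K_1\times K_2}(N)]^{K_1}$.

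For Step 2 the paper takes a different technical route. Rather than factoring the neighborhood as a twisted $\T^*K_1$-bundle over $M_0$ and appealing to a fiber-bundle version of the multiplicative property, the paper (citing \cite{pep-RR}[Section 6.2], and spelling the argument out again in Section \ref{subsec:smooth-K-1-reduction}) composes two standard isomorphisms in equivariant $\K$-theory: the Thom isomorphism $j_!:\K_{K_1\times K_2}(\T_{K_1\times K_2}Z)\to \K_{K_1\times K_2}(\T_{K_1\times K_2}(Z\times\kgot_1^*))$ along the linear fiber $\kgot_1^*$, and the free-action quotient isomorphism $\pi_1^*:\K_{K_2}(\T_{K_2}M_0)\to \K_{K_1\times K_2}(\T_{K_1\times K_2}Z)$. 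The homotopy you sketch is then packaged as the identity $j_!\circ\pi_1^*[\clif_2^{\kappa_2}]=[\clif^{\kappa_1}]$ (Lemma \ref{lem:induction-Z}), and one concludes directly from Atiyah's compatibility of $j_!$ and $\pi_1^*$ with the index, without ever invoking $L^2(K_1)$ or a twisted multiplicative property. This sidesteps precisely the obstacle you flag: there is no need to justify a non-product version of Theorem \ref{theo:multiplicative-property}, because the Thom isomorphism and the free-action reduction each come with their own index compatibility built in. Your approach would work, but the $j_!\circ\pi_1^*$ factorization is the cleaner and more standard way to execute the same geometric idea.
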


\medskip

Suppose now that $0$ is not a regular value of $\Phi_1$. Let $T_1$
be a maximal torus of $K_1$, and let $C_1\subset \tgot_1^*$ be a Weyl
chamber. Since $\Phi_1$ is proper, the convexity Theorem says that
the image of $\Phi_1$ intersects $C_1$ in a closed locally polyhedral 
convex set, that we denote $\Delta_{K_1}(N)$ \cite{L-M-T-W}.

We consider an element $a\in \Delta_{K_1}(N)$ which is generic and
sufficiently close to $0\in \Delta_{K_1}(N)$ : we denote $(K_1)_a$ the
subgroup of $K_1$ which stabilizes $a$. When $a\in\Delta_{K_1}(N)$  is
generic, one can show (see \cite{Meinrenken-Sjamaar}) that
$$
N{/\!\!/}_{a}K_1:=\Phi_{K_1}^{-1}(a)/(K_1)_a
$$
is a compact Hamiltonian $K_2$-orbifold, and that
$$
L_a:= \left(L|_{\Phi_{K_1}^{-1}(a)}\right)/(K_1)_a.
$$
is a $K_2$-equivariant line orbibundle over $N{/\!\!/}_{a}K_1$ : we can
then define, like in Definition \ref{def:quant-compact-lisse}, the
element $\Qcal_{K_2}(N{/\!\!/}_{a}K_1)\in R(K_2)$ as the equivariant index
of the Dolbeault-Dirac operator on $N{/\!\!/}_{a}K_1$ (with coefficients in $L_a$).

\begin{theo}\label{theo:Q-sing}
The $K_1$-invariant part of $\Qcal^{\Phi_1}_{K_1\times K_2}(M)$ is equal to $\Qcal_{K_2}(N{/\!\!/}_{a}K_1)
\in R(K_2)$. In particular,  the elements $\Qcal_{K_2}(N{/\!\!/}_{a}K_1)$ do not depend on the choice
of the generic element $a\in\Delta_H(N)$, when $a$ is sufficiently
close to $0$.
\end{theo}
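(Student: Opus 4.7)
My plan is to combine the support bound of Theorem \ref{prop:Q-beta-support-1} with the symplectic cutting technique of \cite{pep-formal}, reducing the claim to the compact singular $[Q,R]=0$ theorem of Meinrenken-Sjamaar \cite{Meinrenken-Sjamaar}.

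First, observe that taking $K_1$-invariants amounts to extracting the coefficient of the weight $\lambda=0$. Since $\Phi_1$ is proper, $Z_0=\Phi_1^{-1}(0)$ is compact, and Theorem \ref{prop:Q-beta-support-1} forces $\|\lambda\|\geq\|\beta_1\|$, so only the term $\beta_1=0$ contributes and
\begin{equation*}
\bigl[\Qcal^{\Phi_1}_{K_1\times K_2}(N)\bigr]^{K_1}=\bigl[\Qcal^{0}_{K_1\times K_2}(N)\bigr]^{K_1}.
\end{equation*}
This is a $K_2$-character, computed as the $K_1$-invariant part of the equivariant index of a $K_1$-transversally elliptic symbol on an arbitrary relatively compact $K_1\times K_2$-invariant neighborhood $\Ucal_0$ of $Z_0$; in particular it depends only on the geometry of $N$ in a neighborhood of $\Phi_1^{-1}(0)$.

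Next, for any regular value $R>\|a\|^2$ of $\|\Phi_1\|^2$, I would apply the symplectic cutting procedure of \cite{pep-formal} to produce a compact prequantized Hamiltonian $K_1\times K_2$-manifold $N_{\leq R}$ that contains $\{\|\Phi_1\|^2<R\}$ as an invariant open subset with all the original geometric data intact, and satisfies $N_{\leq R}{/\!\!/}_a K_1=N{/\!\!/}_a K_1$. Applying Theorem \ref{prop:Q-beta-support-1} to the compact manifold $N_{\leq R}$ and invoking the excision property on the common neighborhood $\Ucal_0\subset N\cap N_{\leq R}$ yields
\begin{equation*}
\bigl[\Qcal_{K_1\times K_2}(N_{\leq R})\bigr]^{K_1}=\bigl[\Qcal^{0}_{K_1\times K_2}(N_{\leq R})\bigr]^{K_1}=\bigl[\Qcal^{0}_{K_1\times K_2}(N)\bigr]^{K_1}.
\end{equation*}
Since $N_{\leq R}$ is compact, the compact Meinrenken-Sjamaar theorem for singular reductions produces, for any generic $a$ close to $0$,
\begin{equation*}
\bigl[\Qcal_{K_1\times K_2}(N_{\leq R})\bigr]^{K_1}=\Qcal_{K_2}(N_{\leq R}{/\!\!/}_a K_1)=\Qcal_{K_2}(N{/\!\!/}_a K_1).
\end{equation*}
Chaining these three identities gives the theorem; the $a$-independence of the right-hand side is automatic from the $a$-independence of the left-hand side.

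The main obstacle is the construction and analysis of the symplectic cut in the second step. One must verify that the cutting procedure (performed with respect to $\|\Phi_1\|^2$, while keeping the $K_2$-action, which acts trivially on $\kgot_1^*$, untouched) preserves the prequantization, that the ``new'' critical-set components introduced by the cut along the boundary locus $\|\Phi_1\|^2=R$ appear at $\beta_1\neq 0$ and are therefore annihilated by the $K_1$-invariance bound of Theorem \ref{prop:Q-beta-support-1}, and that the excision identification between $\Qcal^{0}$ on $N$ and on $N_{\leq R}$ is valid. Once this machinery is in place, the remainder of the argument is a formal consequence of results already at hand.
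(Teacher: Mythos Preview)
Your first step — using Theorem \ref{prop:Q-beta-support-1} to kill all $\beta_1\neq 0$ contributions and reduce to $[\Qcal^{0}_{K_1\times K_2}(N)]^{K_1}$ — is exactly what the paper does. After that the approaches diverge.

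The paper does \emph{not} pass to a compact cut. It argues that the local $\K$-theoretic analysis of the term $[\Qcal^{0}_{K_1\times K_2}(N)]^{K_1}$ carried out in \cite{pep-RR} for $K_2=\{e\}$ extends verbatim when an extra commuting compact group $K_2$ is present: the symbol $\clif^{\kappa_1}\vert_{\Ucal_0}$ is already $K_1$-transversally elliptic, so its index is smooth in the $K_2$-variable, and the reduction/shift argument identifying the $K_1$-invariants with $\Qcal(N{/\!\!/}_a K_1)$ simply carries the $K_2$-action along. No compactification of $N$ is needed.

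Your route via cutting is conceptually reasonable but has a gap you glossed over. The non-abelian cut of \cite{pep-formal} is itself a symplectic reduction $N_P=(N\times\Xcal_P){/\!\!/}_0 K_1$ and is in general \emph{singular}; its quantization is \emph{defined} via precisely the singular-reduction mechanism you are trying to establish (see Definition \ref{def:quant-sing}), so invoking it as stated is circular. To make your argument non-circular you must choose the polytope $P$ (and the scale $R$) generically so that $0$ is a regular value of the diagonal $K_1$-moment map on $N\times\Xcal_P$, yielding an honest compact orbifold to which the orbifold Meinrenken--Sjamaar theorem applies. That can be arranged, but it requires justification you did not give. You also need a version of the compact $[Q,R]=0$ theorem that keeps track of the auxiliary $K_2$-action; this is available (the shifting-trick arguments in \cite{Meinrenken-Sjamaar} are naturally equivariant), but should be stated explicitly. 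Once these two points are addressed your argument goes through; it is longer than the paper's direct extension of \cite{pep-RR}, but has the virtue of using the compact theorem as a black box rather than re-running its proof.
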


\textsc{Proofs of Theorem \ref{theo:Q-R-lisse} an Theorem \ref{theo:Q-sing} }. When $N$ is compact and 
$K_2=\{e\}$, the proofs can be found in \cite{Meinrenken-Sjamaar} and in \cite{pep-RR}. Let us explain 
briefly how the $\K$-theoretic proof of \cite{pep-RR} extends naturally to our case. Like in Definition 
\ref{def:Q-Phi}, we have the following decomposition
$$
\Qcal^{\Phi_1}_{K_1\times K_2}(N)= \sum_{\beta\in\Bcal_1} \Qcal^{\beta_1}_{K_1\times K_2}(N),
$$
And Theorem \ref{prop:Q-beta-support-1} tells us that 
$\left[\Qcal^{\beta_1}_{K_1\times K_2}(N)\right]^{K_1}=0$ if $\beta_1\neq 0$. We have proved the first step: 
$$
\left[\Qcal^{\Phi_1}_{K_1\times K_2}(N)\right]^{K_1}=\left[\Qcal^{0}_{K_1\times K_2}(N)\right]^{K_1}.
$$

The analysis of the term $\left[\Qcal^{0}_{K_1\times K_2}(N)\right]^{K_1}$ is undertaken in \cite{pep-RR} 
when $K_2=\{e\}$: we explain that this term is equal either to $\Qcal(N{/\!\!/}_{0}K_1)$ when $0$ is a 
regular value, or to $\Qcal(N{/\!\!/}_{a}K_1)$ with $a$ generic. It work similarly with an action of 
a compact Lie group $K_2$. $\Box$

\begin{defi}\label{def:quant-sing}
The geometric quantization of $N{/\!\!/}_{0}K_1:=\Phi_1^{-1}(0)/K_1$ is taken as the 
$K_1$-invariant part of $\Qcal^{\Phi_1}_{K_1\times K_2}(N)$. We denote it $\Qcal_{K_2}(N{/\!\!/}_{0}K_1)$.
\end{defi}

\medskip

\subsection{Quantization of points }\label{subsec:quant-point}

Let $(M,\Omega,\Phi)$ be a proper Hamiltonian $K$-manifold prequantized by a Kostant-Souriau line bundle $L$. 
Let $\mu\in \what{K}$ be dominant weight such that $\Phi^{-1}(K\cdot\mu)$ is a $K$-orbit in $M$. Let 
$m^o\in \Phi^{-1}(\mu)$ so that 
$$
\Phi^{-1}(K\cdot\mu)=K\cdot m^o
$$
Then the reduced space $M_\mu:=\Phi^{-1}(K\cdot\mu)/K$ 
is a point. The aim of this section is to compute the quantization of $M_\mu$: $\Qcal(M_\mu)\in\Z$.

Let $H$ be the stabilizer subgroup of $m^o$.  We have a linear action of $H$ on the $1$-dimensional 
vector space $L_{m^o}\subset L$. We have $H\subset K_\mu$ where $K_\mu$ is the connected subgroup of $K$ 
that fixes $\mu\in\tgot^*$. Let $\C_{-\mu}$ be the $1$-dimensional representation of $K_\mu$ associated to 
the infinitesimal character $-i\mu$.

Let us denote $\chi$ be the character of $H$ defined by the $1$-dimensional representation $\C_\chi:=L_{m^o}\otimes \C_{-\mu}$. 
We know from the Kostant formula (\ref{eq:kostant-L}) that $\chi=1$ on the identity component $H^o\subset H$.

\begin{theo}\label{theo:Q-point}
We have 
\begin{equation}\label{eq:quant-point}
\Qcal(M_\mu)=
\begin{cases}
   1\quad {\rm if}\ \chi=1\ {\rm on}\ H\\
   0\quad {\rm in\ the\ other\ case}.
\end{cases}
\end{equation}
\end{theo}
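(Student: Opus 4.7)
\emph{Strategy.} The plan is to use the shifting trick to realize $\Qcal(M_\mu)$ as a $K$-invariant part, to isolate the contribution of the single relevant $K$-orbit via the Marle--Guillemin--Sternberg normal form and Proposition \ref{prop:psi-invariance}, and then to reduce to a Bargmann--Fock-type computation on the symplectic slice. Form $N = M\times\overline{\Ocal_\mu}$ with shifted moment map $\tilde\Phi(m,\xi) = \Phi(m)-\xi$, so that $M_\mu = N{/\!\!/}_0 K$ and, by Definition \ref{def:quant-sing},
$$
\Qcal(M_\mu) = \left[\Qcal^{\tilde\Phi}_K(N)\right]^K.
$$
The hypothesis $\Phi^{-1}(K\cdot\mu) = K\cdot m^o$ forces $\tilde\Phi^{-1}(0) = K\cdot(m^o,\mu)$ to be a single $K$-orbit with stabilizer $H$. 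In the decomposition $\Qcal^{\tilde\Phi}_K(N) = \sum_{\beta\in\Bcal}\Qcal^\beta_K(N)$, Theorem \ref{prop:Q-beta-support} shows that only $\beta = 0$ can carry the trivial $K$-type, so $\Qcal(M_\mu) = [\Qcal^0_K(N)]^K$.

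\emph{Local model and induction.} By Marle--Guillemin--Sternberg, a $K$-invariant neighborhood of $K\cdot(m^o,\mu)$ in $N$ is equivariantly symplectomorphic to a neighborhood of the zero section in
$$
Y := K\times_H V,
$$
where $V$ is the symplectic slice at $(m^o,\mu)$---a symplectic $H$-representation. Because $\tilde\Phi^{-1}(0)$ consists of the single orbit, the linear moment map $\Phi_V\colon V\to\hgot^*$ satisfies $\Phi_V^{-1}(0)=\{0\}$, so $V$ is \emph{polarized}: an $H$-invariant compatible complex structure turns $V$ into a Hermitian $H$-representation whose weights lie in an open half-space. The prequantum bundle transports to $L_Y = K\times_H(\C_\chi\otimes \C_V)$, where $\C_V$ is the trivial $H$-equivariant prequantum bundle on $V$ and $\C_\chi$ records the $H$-action on $L_{m^o}\otimes\C_{-\mu}$ as in the statement. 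Combining Proposition \ref{prop:psi-invariance} with an induction formula of the type \eqref{eq:induction-slice} (now for the full symplectic slice), obtained via the multiplicative property of the transversally elliptic index (Theorem \ref{theo:multiplicative-property}) and Frobenius reciprocity, yields
$$
\left[\Qcal^0_K(N)\right]^K = \left[\Qcal^{\Phi_Y}_K(Y)\right]^K = \left[\C_\chi\otimes\Qcal^{\Phi_V}_H(V)\right]^H.
$$

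\emph{Conclusion and obstacle.} For the polarized Hermitian $H$-representation $V$, the second bullet of (\ref{eq:qfor=qfor}) cited from \cite{pep-hol-series} identifies $\Qcal^{\Phi_V}_H(V) = \mathrm{Sym}^\bullet_{\C}V^*$ in $\Rfor(H)$. Since the $H^o$-weights of $V$ lie strictly on one side of a hyperplane, $[\mathrm{Sym}^\bullet_{\C}V^*]^{H^o} = \C$ (only the constants are invariant), so $[\C_\chi\otimes\mathrm{Sym}^\bullet_{\C}V^*]^{H^o} = \C_\chi$. By the Kostant relation (\ref{eq:kostant-L}) the character $\chi$ is trivial on $H^o$, so $\C_\chi$ descends to the finite group $H/H^o$, and $[\C_\chi]^{H/H^o}$ equals $\C$ if $\chi = 1$ on $H$ and $0$ otherwise---exactly \eqref{eq:quant-point}. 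The main obstacle is to establish the induction identity $[\Qcal^{\Phi_Y}_K(Y)]^K = [\C_\chi\otimes\Qcal^{\Phi_V}_H(V)]^H$ for the induced manifold $Y = K\times_H V$: one must show that the deformed Dolbeault--Dirac symbol on $Y$ is multiplicatively induced from the corresponding symbol on $V$, and that the operation of taking $K$-invariants commutes with this induction to produce $H$-invariants of the twisted quantization of the slice.
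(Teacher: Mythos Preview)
Your approach is essentially the paper's own: shifting trick to form $N=M\times\overline{K\cdot\mu}$, reduction to the $\beta=0$ term via Theorem~\ref{prop:Q-beta-support}, the Marle--Guillemin--Sternberg local model, an induction identity giving $[\indH(\Qcal^{\Phi_H}_H(V)\otimes\C_\chi)]^K=[\Qcal^{\Phi_H}_H(V)\otimes\C_\chi]^H$, the identification $\Qcal^{\Phi_H}_H(V)\simeq S(V^*)$, and finally $[S(V^*)]^{H^o}=\C$.

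One imprecision worth correcting: the local model for an isotropic orbit $K/H\subset N$ is not $K\times_H V$ but $\tilde N=K\times_H\big((\kgot/\hgot)^*\oplus V\big)$, i.e.\ the symplectic normal bundle together with the cotangent directions (Weinstein's isotropic embedding theorem). The space $K\times_H V$ alone is not even symplectic unless $H=K$. This is exactly why the induction step you flag as the ``main obstacle'' is nontrivial: the paper handles the extra $(\kgot/\hgot)^*$ factor by showing that the deformed symbol on $\tilde N$ equals $j_*\big(\mathrm{Bott}(\kgot/\hgot)\odot\clif^{\kappa_V}\otimes\C_\chi\big)$, and then uses Atiyah's induction diagram together with the fact that the Bott symbol has index~$1$. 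Your invocation of Theorem~\ref{theo:multiplicative-property} and Frobenius reciprocity is the right intuition, but the Bott-symbol argument on the cotangent fiber is the concrete mechanism that makes the induction identity go through.
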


This Theorem tells us in particular that $\Qcal(M_\mu)=1$ when the stabiliser subgroup $H\subset K$ of  a point 
$m^o\in\Phi^{-1}(\mu)$ is {\em connected}.

\begin{proof} Let $N=M\times \overline{K\cdot\mu}$ be the proper Hamiltonian $K$-manifold which is prequantized
by the line bundle $L_N:= L\otimes \left[\C_{-\mu}\right]$. Let us denote $\Phi_N$ the moment map on $N$. Since 
$\Phi^{-1}(K\cdot\mu)$ is a $K$-orbit in $M$, we see that $\Phi_N^{-1}(0)$ is the $K$-orbit through $n^o:=(m^o,\mu)$ where 
$m^o\in\Phi^{-1}(\mu)$. Note that $H$ is the stabilizer subgroups of $n^o$.

Let $\Qcal^{\Phi_N}_K(N)\in\Rfor(K)$ be the formal quantization of $N$ through the proper map $\Phi_N$. By definition 
\begin{eqnarray*}
\Qcal(M_\mu)&=& \left[\Qcal^{\Phi_N}_K(N)\right]^K\\
&=& \left[\Qcal^{0}_K(N)\right]^K.
\end{eqnarray*}
where $\Qcal^{0}_K(N)$ depends only of a neighborhood of $\Phi_N^{-1}(0)$.

The orbit $K\cdot n^o\croc N$ is an isotropic embedding since it is the $0$-level of the moment map $\Phi_N$. Then to 
describe a $K$-invariant neighborhood of $K\cdot n^o$ in $N$ we 
can use the normal-form recipe of Marle, Guillemin and Sternberg.

First we consider, following Weinstein (see 
\cite{Guillemin-Sternberg90,Weinstein83}), 
the symplectic normal bundle
\begin{equation}\label{symp-normal}
\mathcal{V}:=\T(K\cdot n^o)^{\perp,\Omega}\Big/ 
\T(K\cdot n^o)\ ,
\end{equation}
where the orthogonal (\,${}^{\perp,\Omega}$) is taken relatively to the symplectic 2-form. 
We have 
$$
\mathcal{V}=K\times_{H}V
$$
where the vector space $V:=\T_{n^o}(K\cdot n^o)^{\perp,\Omega}\Big/ \T_{n^o}(K\cdot n^o)$ 
inherits a symplectic structure and an Hamiltonian action of the group $H$: we denote 
$\Phi_H: V\to\hgot^*$ the corresponding moment map.

\medskip

Consider now the following symplectic manifold 
\begin{equation}\label{Y-tilde-sigma}
\tilde{N}:= \mathcal{V}\oplus 
\T^*(K/H)= K\times_H
\Big( (\kgot/\hgot)^*\oplus V \Big).
\end{equation}
The action of $H$ on
$\tilde{N}$ is Hamiltonian and the moment map 
$\Phi_{\tilde{N}}:\tilde{N}\to\kgot^*$ is given by 
the equation
\begin{equation}
\Phi_{\tilde{N}}([k;\xi,v])=k\cdot(\xi+\Phi_H(v))\quad k\in K,\ 
\xi\in (\kgot/\hgot)^*,\ v\in V\ .
\end{equation}
The Hamiltonian $K$-manifold $\tilde{N}$ is prequantized by the line bundle $L_{\tilde{N}}:= K\times_H\C_\chi$.
\medskip

The {\it local normal form} Theorem (see \cite{Guillemin-Sternberg84}, \cite{Sjamaar-Lerman91} Proposition
2.5 ) tells us that there exists a $K$-Hamiltonian isomorphism $\Upsilon :\mathcal{U}_1 \stackrel{\sim}{\to} \mathcal{U}_2$
between a $K$-invariant neighborhood  $\mathcal{U}_1$ of $K\cdot n^o$ in $N$, and
a $K$-invariant neighborhood $\mathcal{U}_2$ of $K/H$ in $\tilde{N}$. 
This isomorphism $\Upsilon$, when restricted to $K\cdot n^o$, 
corresponds to the natural isomorphism $K\cdot n^o\stackrel{\sim}{\to}K/H$. 

Thanks to $\Upsilon$, we know that the fiber $\Phi_H^{-1}(0)\subset V$ is reduced to $\{0\}$. This last 
point is equivalent to the fact that $\Phi_H$ (and then $\Phi_{\tilde{N}}$) is proper map (see \cite{pep-formal}). 
We check easily that the set of critical points of $\|\Phi_{\tilde{N}}\|^2$ is reduced to $\Phi_{\tilde{N}}^{-1}(0)=K/H$. 
Then, thank to the isomorphism $\Upsilon$, we have that 
\begin{equation}\label{eq:Qcal-N-tilde}
\Qcal^{0}_K(N)=\Qcal^{0}_K(\tilde{N})=\Qcal^{\Phi_{\tilde{N}}}_K(\tilde{N}).
\end{equation}

Let $\indH: \Rfor(H)\to\Rfor(K)$ be the induction map that is defined by the relation 
$\langle\indH(\phi), E\rangle=\langle\phi, E\vert_H\rangle$ 
for any $\phi\in \Rfor(H)$ and $E\in R(K)$. Note that 
$$
[\indH(\phi)]^K=\langle\indH(\phi), \C\rangle=\langle\phi, \C\rangle=[\phi]^H.
$$

Since $\Phi_H:V\to\hgot^*$ is proper one can consider the quantization 
of the vector space $V$ through the map $\Phi_H$: $\Qcal^{\Phi_H}_H(V)\in\Rfor(H)$. 

\begin{prop}\label{prop:3-points}
$\bullet$ We have 
\begin{equation}\label{eq:indH}
\Qcal^{\Phi_{\tilde{N}}}_K(\tilde{N})=\indH\left(\Qcal^{\Phi_H}_H(V)\otimes \C_\chi\right)
\end{equation}

$\bullet$ The formal quantization $\Qcal^{\Phi_H}_H(V)$ coincides, as a generalized $H$-module,  
to the $H$-module $S(V^*)$ of polynomial function on $V$.

$\bullet$ The set $\left[S(V^*)\right]^{H^o}$ of polynomials invariant by the connected component $H^o$ is reduced 
to the scalars.	
\end{prop}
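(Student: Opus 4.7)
Write $W := V \oplus (\kgot/\hgot)^*$, so that $\tilde{N} = K \times_H W$ and $L_{\tilde{N}} = K \times_H \C_\chi$. The three points will be treated independently; point (1) is the technical crux, while (2) and (3) reduce to standard facts about the quantization of Hermitian vector spaces and classical invariant theory.

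\textbf{For (1)}, the plan is to realize $\Qcal^{\Phi_{\tilde{N}}}_K(\tilde{N})$ as the induction of an $H$-equivariant index on $W$. Since $\Phi_{\tilde{N}}^{-1}(0) = K/H$ is the entire critical set of $\|\Phi_{\tilde{N}}\|^2$, $\Qcal^{\Phi_{\tilde{N}}}_K(\tilde{N})$ equals the $K$-equivariant index of the Kirwan symbol $\clif^\kappa \otimes L_{\tilde{N}}$ on a $K$-invariant tubular neighborhood of $K/H$, which I would take of induced form $\Ucal = K \times_H \Ucal_W$ with $\Ucal_W \subset W$ a relatively compact $H$-invariant open neighborhood of $0$. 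The standard induction isomorphism $\K_K(\T_K \Ucal) \simeq \K_H(\T_H \Ucal_W)$ intertwines the $K$-equivariant index on $\Ucal$ with $\indH$ applied to the $H$-equivariant index on $\Ucal_W$, and under it the pulled-back symbol is $\clif^{\kappa_W} \otimes \C_\chi$. The formula $\Phi_{\tilde{N}}([k;\xi,v]) = k \cdot(\xi + \Phi_H(v))$ forces $\Phi_W(\xi,v) = \Phi_H(v)$, so I would next deform $\clif^{\kappa_W}$ through $H$-transversally elliptic symbols on $\Ucal_W$ into the exterior product $\clif^{\kappa_V} \odot \sigma_{\mathrm{Bott}}$, where $\sigma_{\mathrm{Bott}}$ is the Bott symbol on the real $H$-representation $(\kgot/\hgot)^*$ whose $H$-index is the trivial character $\C$. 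The multiplicative property (Theorem \ref{theo:multiplicative-property}) then yields $\indice^H_{\Ucal_W}(\clif^{\kappa_W}) = \Qcal^{\Phi_H}_H(V)$, which combined with the line-bundle twist gives the induction formula of (1). The hard part will be controlling this homotopy: the critical locus of $\|\Phi_W\|^2$ on $W$ includes the entire noncompact slice $(\kgot/\hgot)^* \times \{0\}$ (since $\|\Phi_W\|^2$ does not depend on $\xi$), so the $H$-transversal-ellipticity condition must be monitored carefully; keeping $\Ucal_W$ relatively compact in the $(\kgot/\hgot)^*$-direction should suffice.

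\textbf{For (2)}, I would invoke the standard Bargmann--Fock computation for a Hermitian $H$-representation $V$ with proper moment map: the Witten deformation of the Dolbeault symbol on $V$ has equivariant index isomorphic, as an $H$-module, to the polynomial algebra $S(V^*)$. This case was established in the author's earlier works \cite{pep-ENS,pep-formal,pep-hol-series}. \textbf{For (3)}, the hypothesis $\Phi_H^{-1}(0) = \{0\}$, combined with the Kempf--Ness theorem for the unitary action of the compact group $H^o$ on $V$, forces the semistable locus of $V$ to be $\{0\}$, so every $H^o_{\C}$-orbit has $0$ in its closure. Since $H^o$-invariant polynomials coincide with $H^o_{\C}$-invariant polynomials by Zariski density, and are constant on orbit closures, each such polynomial equals its value at $0$ and is therefore a scalar.
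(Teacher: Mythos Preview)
Your strategy for all three points matches the paper's, and points (2) and (3) are fine: the paper also reduces (2) to Atiyah's computation of the index of the pushed symbol $\clif(w-J_V v)$ on $V$ (after a linear homotopy from $\clif^{\kappa_V}$), and for (3) cites the properness of $\Phi_H$ exactly as you do, your Kempf--Ness phrasing being an equivalent packaging.

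For (1), however, there is a concrete misidentification. You assert that under the induction isomorphism $j_*:\K_H(\T_H\Ycal)\to\K_K(\T_K\tilde N)$ (with $\Ycal=(\kgot/\hgot)^*\oplus V$) the class of $\clif^{\kappa_{\tilde N}}$ pulls back to the $H$-Kirwan--pushed symbol $\clif^{\kappa_W}\otimes\C_\chi$, where $\kappa_W$ is built from the $H$-moment map $\Phi_W(\xi,v)=\Phi_H(v)$. That is not what happens. The vector field $\kappa_{\tilde N}$ is the \emph{$K$}-Kirwan field, and since $\Phi_{\tilde N}([e;\xi,v])=\xi+\Phi_H(v)$ has a nonzero $(\kgot/\hgot)^*$-component, one finds
\[
\kappa_{\tilde N}([e;\xi,v])=\big(-\xi+i[\xi,\Phi_H(v)]\big)\oplus\kappa_V(v)\ \in\ (\kgot/\hgot)_\C\oplus V.
\]
Hence the symbol restricted to the slice is already of the form ${\rm Bott}(\kgot/\hgot)\odot\clif^{\kappa_V}\otimes\C_\chi$ up to the lower-order term $i[\xi,\Phi_H(v)]$, and a one-parameter homotopy killing that term (the paper writes it as $\sigma^T\vert_{(\xi,v)}(\eta)=\clif(\xi+i\eta-iT[\xi,\Phi_H(v)])$, $T\in[0,1]$) stays $H$-transversally elliptic because the $-\xi$ push is present throughout.

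This is precisely why your ``hard part'' evaporates: the noncompact critical locus you worried about, $(\kgot/\hgot)^*\times\{0\}$, is an artifact of using the $H$-Kirwan field $\kappa_W$, which has no $\xi$-component. The actual pullback carries the Bott push $-\xi$ from the start, so the characteristic set on the slice is just $\{0\}$ and no delicate control of $\Ucal_W$ is needed. Once you correct the identification of the pulled-back symbol, the rest of your argument (Atiyah's induction diagram plus the multiplicative property and $\indice^H({\rm Bott}(\kgot/\hgot))=1$) goes through exactly as you outlined and exactly as in the paper.
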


With the last Proposition we can finish the proof of Theorem \ref{theo:Q-point} as follows. We have 
\begin{eqnarray*}
\Qcal(M_\mu)&=&\left[\Qcal^{\Phi}_K(N)\right]^K\\
&=&\left[\Qcal^{\Phi_{\tilde{N}}}_K(\tilde{N})\right]^K\\
&=& \left[\Qcal^{\Phi_H}_H(V)\otimes \C_\chi\right]^H\\
&=& \left[S(V^*)\otimes \C_\chi\right]^H=\left[\C_\chi\right]^H.
\end{eqnarray*}

\begin{proof}
The first point of Proposition \ref{prop:3-points} follows from the property of induction defined by Atiyah (see Section 3.4 in \cite{pep-RR}). 
Let us explain the arguments. We work with the $H$-manifold $\Ycal=(\kgot/\hgot)^*\oplus V$ and the $H$-equivariant map 
$j:\Ycal \croc \tilde{N}:= K\times_H \Ycal, y\mapsto [e,y]$. 

We notice\footnote{\label{eq.espace.tangent} These identities come from the following $K\times H$-equivariant 
isomorphism of vector bundles over $K\times\Ycal$:
$\T_{H}(\tilde{N})\to K\times(\kgot/\hgot \oplus \T \Ycal),
(k,m;\frac{d}{dt}_{\vert t=0}(ke^{tX})+ v_{m})\mapsto
(k,m; pr_{\kgot/\hgot}(X)+v_{m})$. Here $pr_{\kgot/\hgot}:
\kgot\to \kgot/\hgot$ is the orthogonal projection.}
that $\T\tilde{N}\simeq 
K\times_{H}(\kgot/\hgot \oplus \T \Ycal)$, and  
that $\T_{K}\tilde{N}\simeq K\times_{H}( \T_{H} \Ycal)$. Hence the 
map $j$ induces an isomorphism $j_{*}: K_{H}(\T_{H}\Ycal)\to K_{K}(\T_{K} \tilde{N})$. 
Theorem 4.1 of Atiyah \cite{Atiyah74} tells us that the following diagram
\begin{equation}\label{eq:diagramme-induit}
\xymatrix{
K_{H}(\T_{H}\Ycal)\ar[r]^{j_{*}}\ar[d]_{\indice_{\Ycal}^H} & 
K_{K}(\T_{K}\tilde{N})\ar[d]^{\indice_{\tilde{N}}^K}\\
\Rfor(H)\ar[r]_{\indH} &  \Rfor(K)\ .
   }
\end{equation} 
is commutative. 

The tangent bundle 
$\T\tilde{N}$ is equivariantly diffeomorphic to 
$$
K\times_H \left[\kgot/\hgot \oplus (\kgot/\hgot)^*\oplus \T V\right]\simeq 
K\times_H \left[(\kgot/\hgot)_\C \oplus \T V\right]
$$
where $(\kgot/\hgot)_\C$ is the complexification of the real vector space $\kgot/\hgot$.
We consider on $\tilde{N}$ the almost complex structure $J_{\tilde{N}}= (i, J_V)$ where 
$i$ is the complex structure on  $(\kgot/\hgot)_\C$ and $J_V$ is a compatible (constant) 
complex structure on the symplectic vector space $V$. Note that $J_{\tilde{N}}$ is compatible 
with the symplectic structure on a neighborhood $U$ of the $0$-section of the bundle
$\tilde{N}\to K/H$.

Let $\kappa_{\tilde{N}}$ be the Kirwan vector field on $\tilde{N}$: 
$$
\kappa_{\tilde{N}}([k;\xi,v])= -\xi+ i \,[\xi,\Phi_H(v)]\oplus \kappa_V(v)\quad \in\quad 
(\kgot/\hgot)_\C \oplus V.
$$
Here $\kappa_V$ is the Kirwan vector field relative to the Hamiltonian action of $H$ on the symplectic 
vector space $V$. Note that $\kappa_{\tilde{N}}$ vanishes exactly on the $0$-section of the bundle
$\tilde{N}\to K/H$.

Let $\clif^{\kappa_{\tilde{N}}}$ be the symbol $\Thom(\tilde{N},J_{\tilde{N}})\otimes L_{\tilde{N}}$ 
pushed by the vector field $\kappa_{\tilde{N}}$. The generalized character $\Qcal^{\Phi_{\tilde{N}}}_K(\tilde{N})$ 
is either computed as the equivariant index of the symbols $\clif^{\kappa_{\tilde{N}}}$ or $\clif^{\kappa_{\tilde{N}}}\vert_U$. 

\begin{rem} The fact that $J_{\tilde{N}}$ is not compatible on the entire manifold $\tilde{N}$ is not problematic, since 
$J_{\tilde{N}}$ is  compatible in a neighborhood U of the set where $\kappa_{\tilde{N}}$ vanishes. See the first point of 
Lemma \ref{prop:psi-invariance}.
\end{rem}

For $X+i\eta \oplus w\in \T_{[k;\xi,v]}\tilde{N}\simeq (\kgot/\hgot)_\C \oplus V$, the map  
\begin{equation}\label{eq:clif-tilde-N}
\clif^{\kappa_{\tilde{N}}}(X+i\eta \oplus w)=\clif\Big(X+\xi+i(\eta-[\xi,\Phi_H(v)]\Big)\odot\clif\Big(w-\kappa_V(v)\Big)
\end{equation}
acts on the vector space $\wedge_\C (\kgot/\hgot)_\C \otimes \wedge_{J_V} V\otimes\C_\chi$. 

Let ${\rm Bott}(\kgot/\hgot)$ be the Bott morphism of the vector space $\kgot/\hgot$. It is an elliptic morphism defined by
$$
{\rm Bott}(\kgot/\hgot)\vert_{\xi}(\eta)=\clif(\xi+i\eta)\quad {\rm acting\ on}\quad \wedge_\C (\kgot/\hgot)_\C,
$$
for $\eta\in \T_\xi(\kgot/\hgot)$. Let $\clif^{\kappa_{V}}$ be the symbol $\Thom(V,J_{V})$ pushed by
 the vector field $\kappa_{V}$.
\begin{lem}
We have 
$$
\clif^{\kappa_{\tilde{N}}}=j_*\Big({\rm Bott}(\kgot/\hgot)\odot\clif^{\kappa_{V}}\otimes \C_\chi\Big).
$$
\end{lem}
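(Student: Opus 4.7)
The plan is to establish the identity in $K_K(\T_K\tilde{N})$ by reducing to a slice computation and applying a straight-line homotopy. By $K$-equivariance it suffices to verify equality of restrictions to $\T_K\tilde{N}|_{[e;\xi,v]}$. Using the splitting $\T\tilde{N}\simeq K\times_H((\kgot/\hgot)_\C\oplus\T V)$ recorded in the preceding footnote, a tangent vector at $[e;\xi,v]$ takes the form $X+i\eta\oplus w$ with $X\in\kgot/\hgot$, $\eta\in(\kgot/\hgot)^*$, $w\in V$. A short computation of $K$-orbit directions at $[e;\xi,v]$---the infinitesimal action of $Y=Y_\hgot+Y_{\kgot/\hgot}\in\kgot$ corresponds to $(Y_{\kgot/\hgot},[Y_\hgot,\xi],Y_\hgot\cdot v)$ in this trivialization---shows that $\T_K\tilde{N}|_{[e;\xi,v]}=\{0\}\oplus\T_H\Ycal|_{(\xi,v)}$, recovering the identification $\T_K\tilde{N}\simeq K\times_H\T_H\Ycal$ that underlies Atiyah's $j_*$.

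Next, evaluate both sides on a $K$-transversal vector $(0,\eta,w)$. By formula~(\ref{eq:clif-tilde-N}) with $X=0$ the left-hand side is
\[
\clif\bigl(\xi+i(\eta-[\xi,\Phi_H(v)])\bigr)\odot\clif\bigl(w-\kappa_V(v)\bigr),
\]
whereas under the Atiyah identification above, the right-hand side evaluates on the slice to the original product symbol on $\Ycal$, namely $\clif(\xi+i\eta)\odot\clif(w-\kappa_V(v))$. Both act on $\wedge_\C(\kgot/\hgot)_\C\otimes\wedge_{J_V}V\otimes\C_\chi$ and differ only by the shift $\eta\mapsto\eta-[\xi,\Phi_H(v)]$ in the imaginary part.

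The straight-line family $\sigma_t(0,\eta,w):=\clif(\xi+i(\eta-t[\xi,\Phi_H(v)]))\odot\clif(w-\kappa_V(v))$ for $t\in[0,1]$ interpolates between the two. A direct check shows that the characteristic set of $\sigma_t$ on $\T_K\tilde{N}$ is independent of $t$ and equal to $\{\xi=0,\,\eta=0,\,w=0,\,\kappa_V(v)=0\}$, which is compact thanks to the properness of $\Phi_H$. Hence $\sigma_0$ and $\sigma_1$ define the same class in $K_K(\T_K\tilde{N})$. The main obstacle is the bookkeeping of tangent-space identifications---decomposing $\kappa_{\tilde{N}}$ along the splitting $\kgot/\hgot\oplus(\kgot/\hgot)^*\oplus V$ and verifying the claimed restriction of $j_*$ to the slice---after which the comparison reduces to the above linear homotopy.
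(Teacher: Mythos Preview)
Your proof is correct and follows essentially the same approach as the paper. The paper introduces the family $\sigma^T|_{(\xi,v)}(\eta)=\clif(\xi+i\eta-iT[\xi,\Phi_H(v)])$ on the slice $\Ycal$, observes that $\clif^{\kappa_{\tilde N}}=j_*(\sigma^1\odot\clif^{\kappa_V}\otimes\C_\chi)$ while ${\rm Bott}(\kgot/\hgot)=\sigma^0$, and then checks that $\sigma^T\odot\clif^{\kappa_V}$ is a homotopy of $H$-transversally elliptic symbols on $(\kgot/\hgot)^*\times V$; you run the same straight-line homotopy but phrase it after transporting through the identification $\T_K\tilde N\simeq K\times_H\T_H\Ycal$ rather than before applying $j_*$, and you spell out the characteristic-set computation that the paper leaves implicit.
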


\begin{proof} We work with the symbol 
$$
\sigma^T\vert_{(\xi,v)}(\eta)=\clif(\xi+i\eta- iT\,[\xi,\Phi_H(v)])
$$
acting on $\wedge_\C (\kgot/\hgot)_\C$. Note that 
${\rm Bott}(\kgot/\hgot)=\sigma^0$. From (\ref{eq:clif-tilde-N}), we see that 
$\clif^{\kappa_{\tilde{N}}}=j_*\Big(\sigma^1\odot\clif^{\kappa_{V}}\otimes \C_\chi\Big)$. It is now easy to 
check that $\sigma^T\odot\clif^{\kappa_{V}}\otimes \C_\chi, T\in [0,1]$ is an homotopy of transversally elliptic 
symbols on  $\kgot/\hgot\times V$.
\end{proof}

The commutative diagram (\ref{eq:diagramme-induit}) and the last Lemma gives 
\begin{eqnarray*}
\Qcal^{\Phi_{\tilde{N}}}_K(\tilde{N})&=&
\indice_{\tilde{N}}^K(\clif^{\kappa_{\tilde{N}}})\\
&=&\indH\left(\indice_{\kgot/\hgot\times V}^K\Big({\rm Bott}(\kgot/\hgot)\odot\clif^{\kappa_{V}}\Big)\otimes \C_\chi\right)\\
&=&\indH\left(\indice_{\kgot/\hgot}^K({\rm Bott}(\kgot/\hgot))
\otimes\indice_{V}^K(\clif^{\kappa_{V}})\otimes \C_\chi\right)\\
&=& \indH\left(\Qcal^{\Phi_{H}}_H(V)\otimes \C_\chi\right).
\end{eqnarray*}
We have used here that the equivariant index of ${\rm Bott}(\kgot/\hgot)$ is equal to $1$ (e.g. the trivial representation).

Let us proved now the second point of Proposition \ref{prop:3-points}. The Kirwan vector field $\kappa^V$ 
satisfies the simple rule:
\begin{equation}\label{eq:kappa-J}
(\kappa^V(v), J_V v)=-\Omega(\kappa^V(v),v)= \frac{1}{2}\|\Phi_H(v)\|^2, \quad v\in V.
\end{equation}
It shows in particular that $\kappa^V(v)=0\Leftrightarrow \Phi_H(v)=0$. Since the moment map 
$\Phi_H:V\to\hgot^*$ is quadratic, the
fact that $\Phi_H$ is proper is equivalent to the fact that $\Phi_H^{-1}(0)=0$. 

We consider on $V$ the family of symbol $\sigma^s$ :
$$
\sigma^s\vert_v(w)=\clif\Big(w-s\kappa^V(v)-(1-s)J_V v\Big) 
$$
viewed as a map from $\wedge^{\rm even}_\C V$ to $\wedge^{\rm odd}_\C V$. Thanks to (\ref{eq:kappa-J}), one sees that $\sigma^s$
is a family of $K$-transversally elliptic symbol on $V$. Hence $\sigma^1=\clif^{\kappa_{V}}$ and $\sigma^0=\clif(w-J_V v) $ defines the same 
class in the group $K_K(\T_K V)$. The symbol $\sigma^0$ was first studied by Atiyah \cite{Atiyah74} when $\dim_\C V=1$. The author 
considered the general case in \cite{pep-RR}. We have 
$$
\indice^K_V(\sigma^0)= S(V^*)\quad \hbox{\rm in}Ê\ \Rfor(K).
$$

The last point of Proposition \ref{prop:3-points} is a consequence of the properness of the moment map $\Phi_H$ (see Section 5 of 
\cite{pep-formal}).

\end{proof}
\end{proof}

\begin{exam}[\cite{pep-formal}] We consider the action of the unitary group ${\rm U}_n$ on $\C^n$. The symplectic 
form on $\C^n$ is defined by $\Omega(v,w)=\frac{i}{2}\sum_k v_k\overline{w_k}-\overline{v_k}w_k$. Let us identify
the Lie algebra $\ugot_n$ with its dual through the trace map. The moment map
$\Phi: \C^n\to\ugot_n$ is defined by $\Phi(v)=\frac{1}{2i}v\otimes v^*$ where $v\otimes v^*:\C^n\to\C^n$ is the linear 
map $w\mapsto (\sum_k \overline{v_k}w_k) v$. One checks easily that the pull-back by $\Phi$ of a ${\rm U}_n$-orbit in 
$\ugot_n$ is either empty or a ${\rm U}_n$-orbit in $\C^n$. We knows also that the stabiliser subgroup of a non-zero 
vector of $\C^n$ is connected since it is diffeomorphic to ${\rm U}_{n-1}$. Finally we have 
\begin{equation}\label{eq:quant-point-example}
\Qcal((\C^n)_\mu)=
\begin{cases}
   1\quad {\rm if}\ \mu\in \what{{\rm U}_n}\ {\rm belongs \ to\ the\ image\ of}\ \Phi\\
   0\quad {\rm if}\ \mu\in \what{{\rm U}_n}\ {\rm does\ not\ belongs\ to\ the\ image\ of}\ \Phi.
\end{cases}
\end{equation}

Then one checks that $\qfor_{{\rm U}_n}(\C^n)$ coincides in $\Rfor({\rm U}_n)$ with the algebra $S((\C^n)^*)$ of polynomial function on $\C^n$.
\end{exam}

\begin{exam}[\cite{pep-hol-series}] We consider the Lie group ${\rm SL}_2(\R)$ and its compact torus of dimension $1$ 
denoted by $T$. The Lie algebra $\slgot_2(\R)$ is identified with its dual through the trace map, and the Lie algebra 
$\tgot$ is naturally identified with $\slgot_2(\R)^T$. 
For $l\in \Z\setminus \{0\}$, we consider the character $\chi_l$ of $T$ defined by 
$$
\chi_l\left(\begin{array}{cc}\cos\theta& -\sin\theta\\ \sin\theta& \cos\theta\end{array}\right)= e^{il\theta}.
$$
Its differential $\frac{1}{i}d\chi_l\in\tgot^*$ correspond (through the trace map) to the matrix 
$$
X_l=\left(\begin{array}{cc}0 & l/2\\ -l/2& 0\end{array}\right).
$$

Let $\Ocal_l$ be the coadjoint orbit of the group ${\rm SL}_2(\R)$ trough the matrix $X_l$. It is a Hamiltonian 
${\rm SL}_2(\R)$-manifold prequantized by the  ${\rm SL}_2(\R)$-equivariant line bundle $L_l\simeq {\rm SL}_2(\R)\times_T \C_{l}$, 
where $\C_l$ is the $T$-module 
associated to the character $\chi_l$. We look at the Hamiltonian action 
of $T$ on $\Ocal_l$. Let $\Phi_T:\Ocal_l\to \tgot^*$ be the corresponding moment map. One checks that 
the moment map $\Phi_T$ is {\em proper} and that its image is equal to the half-line $\{ a X_l, a\geq 1\}\subset\tgot^*$. 

We check  that for each $\xi\in \{ a X_l, a\geq 1\}$ the fiber $\Phi^{-1}_T(\xi)$ is equal to a $T$-orbit in $\Ocal_l$. For $k\in \Z$, let us 
denote $\left(\Ocal_l\right)_k$ the symplectic reduction of $\Ocal_l$ at the level $X_k$. We knows that 
$\left(\Ocal_l\right)_k=\emptyset$ if $k\notin\{ a l, a\geq 1\}$, and that $\left(\Ocal_l\right)_k$ is a point if $k\in\{ a l, a\geq 1\}$.

In order to compute $\Qcal(\left(\Ocal_l\right)_k)$ we look at the stabilizer subgroup $T_m:=\{t\in T\,\vert\, t\cdot m=m\}$ for each 
point $m\in\Ocal_l$. One sees that $T_m=T$ if $m=X_l$ and $T_m$ is equal to the center $\{\pm Id\}$ of ${\rm SL}_2(\R)$, 
when $m\neq X_l$.

Theorem \ref{theo:Q-point} gives in this setting that, for $k\in\{ a l, a\geq 1\}$, 
\begin{equation}\label{eq:quant-point-example-sl-2}
\Qcal(\left(\Ocal_l\right)_k)=
\begin{cases}
   1\quad {\rm if}\ l-k \ {\rm is\ even}\\
   0\quad {\rm if}\ l-k \ {\rm is\ odd}.
\end{cases}
\end{equation}

Hence the formal geometric quantization of the proper $T$-manifold $\Ocal_l$ is
\begin{equation}\label{eq:quant-example-sl-2}
\qfor_T(\Ocal_l)=
\begin{cases}
   \C_l\cdot\sum_{p\geq 0} \C_{2p}\quad {\rm if}\ l>0\\
   \C_l\cdot\sum_{p\geq 0} \C_{-2p}\quad {\rm if}\ l<0.
\end{cases}
\end{equation}
Here we recognizes that $\qfor_T(\Ocal_l)$ coincides with the restriction of the holomorphic (resp. anti-holomorphic) discrete series representation  
$\Theta_l$ to the group $T$ when $l>0$ (resp. $l<0$).

\end{exam}

\subsection{Wonderful compactifications and symplectic cuts }\label{subsec:wonderfull}

Another equivalent definition of the quantization $\qfor$ uses a generalisation of the technique of symplectic
cutting (originally due to Lerman \cite{Lerman-cut}) that was introduced in \cite{pep-formal} and was 
motivated by the wonderful compactifications of De Concini and Procesi. Let us recall the method. 

We recall that $T$ is a maximal torus in the compact connected Lie group $K$, and $W$ is the Weyl group. 
We define a {\em $K$-adapted} polytope in $\tgot^*$ to be a $W$-invariant Delzant polytope $P$ in $\tgot^*$
whose vertices are regular elements of the weight lattice $\Lambda^*$. If $\{\lambda_1,\ldots,\lambda_N\}$
are the dominant weights lying in the union of all the closed one-dimensional faces of
$P$, then there is a $G \times G$-equivariant embedding of $G= K_\C$ into
$$\Pbb( \bigoplus_{i=1}^N V^*_{\lambda_i} \otimes V_{\lambda_i})$$
associating to $g \in G$ its representation on $\bigoplus_{i=1}^N V_{\lambda_i}$. 
The closure $\Xcal_{P}$ of the image of $G$ in this
projective space is smooth and is equipped with a $K\times K$ that we denote:
$$
(k_1,k_2)\cdot x= k_2\cdot x\cdot  k_1^{-1}.
$$
Let $\Omega_{\Xcal_P}$ be the symplectic $2$-form on $\Xcal_P$ which given by the Kahler structure. 
We recall briefly the different properties of $(\Xcal_P,\Omega_{\Xcal_P})$ : all the details can be found in \cite{pep-formal}.

\begin{enumerate}
\item $\Xcal_P$ is equipped with an Hamiltonian action of $K\times K$. Let 
$\Phi=(\Phi_l,\Phi_r): M\to\kgot^*\times\kgot^*$ be the corresponding moment map.

\item The image of $\Phi:=(\Phi_l,\Phi_r)$ is equal to $\{(k\cdot\xi,-k'\cdot\xi)\ \vert\ \xi\in P\ 
{\rm and}\ k,k'\in K\}$.

\item The Hamiltonian manifold $(\Xcal_P, K\times K)$ has no multiplicities: the pull-back by $\Phi$ of a 
$K\times K$-orbit in the image is a $K\times K$-orbit in $\Xcal_P$.
\end{enumerate} 

Let $\Ucal_P:= K\cdot P^\circ$ where $P^\circ$ is the interior of $P$. We define
$$
\Xcal_P^\circ:=\Phi_l^{-1}(\Ucal_P)
$$
which is an invariant, open and dense subset of $\Xcal_P$. We have the following 
important property concerning $\Xcal_P^\circ$.

\begin{enumerate}
\item[(4)] There exists an equivariant diffeomorphism $\Upsilon  : K\times \Ucal_P\to \Xcal_P^\circ$
such that $\Upsilon^*(\Phi_l)(k,\xi)= k\cdot \xi$ and $\Upsilon^*(\Phi_r)(k,\xi)=-\xi$.

\item[(5)] This diffeomorphism $\Upsilon$ is a 
quasi-symplectomorphism in the sense that there is a homotopy of
symplectic forms taking the symplectic form on the open subset $K\times \Ucal_P$ of the cotangent bundle $\T^*K$
to the pullback of the symplectic form $\Omega_{\Xcal_P}$ on $\Xcal_P^o$. 

\item[(6)] The symplectic manifold $ (\Xcal_P,\Omega_{\Xcal_P})$ is prequantized by the restriction 
of the hyperplane line bundle $\Ocal(1)\to\Pbb( \oplus_{i=1}^N V^*_{\lambda_i} \otimes V_{\lambda_i})$ to $\Xcal_P$: 
let us denoted $L_P$ the corresponding 
$K\times K$-equivariant line bundle.

\item[(7)] The pull-back of the line bundle $L_P$ by the map $\Upsilon  : K\times \Ucal_P\croc \Xcal_P$ 
is trivial.
\end{enumerate}

\bigskip

Let $(M,\Omega_M,\Phi_M)$ be a proper Hamiltonian $K$-manifold. We
 also consider the Hamiltonian $K\times K$-manifold $\Xcal_P$
associated to a $K$-adapted polytope $P$. We consider now the
product $M\times \Xcal_P$ with the following $K\times K$ action:
\begin{itemize}
  \item the action $k\cdot_1 (m,x)=(k\cdot m,x\cdot k^{-1})$ : the
  corresponding moment map is $\Phi_1(m,x)=\Phi_M(m)+\Phi_r(x)$,
  \item the action  $k\cdot_2 (m,x)=(m,k\cdot x)$ : the
  corresponding moment map is $\Phi_2(m,x)=\Phi_l(x)$.
\end{itemize}

\begin{defi}\label{def:M-P}
    We denote $M_P$ the symplectic reduction at $0$ of $M\times \Xcal_P$
    for the action $\cdot_1$ : $M_P:=(\Phi_1)^{-1}(0)/(K,\cdot_1)$.
\end{defi}

Then $M_{P}$ inherits a Hamiltonian $K$-action with moment map
$\Phi_{M_P}: M_P\to \kgot^*$ whose image is
$\Phi(M) \cap K\cdot P$.

One checks that $M_P$ contains an open and dense subset of smooth points which quasi-symplectomorphic to 
the open subset $(\Phi_M)^{-1}(\Ucal_P)$. If the polytope $P$ is fixed, we can work with the dilated polytopes 
$nP$ for $n\geq 1$. We have then the family of compact, perhaps singular, $K$-hamiltonian manifolds $M_{nP}$, 
$n\geq 1$: in Section \ref{subsec:symplectic-quotient}, we have explained how was defined their geometric quantization 
$\Qcal_K(M_{nP})\in R(K)$. 

We have a convenient definition for $\qfor$.

\begin{prop}[\cite{pep-formal}]\label{prop:defi-2-qfor}
We have the following equality in $\Rfor(K)$:
\begin{equation}\label{eq:defi-2-qfor}
\qfor_K(M)=\lim_{n\to\infty}\Qcal_K(M_{nP}).
\end{equation}
\end{prop}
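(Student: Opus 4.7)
The plan is to prove that, for each dominant weight $\mu\in\what{K}$, the multiplicity of $V_\mu^K$ in $\Qcal_K(M_{nP})$ will equal $\Qcal(M_\mu)$ as soon as $n$ is large enough that $\mu\in nP^\circ$. Since $\bigcup_{n\geq 1} nP^\circ = \tgot^*$, this forces the coefficients of $\Qcal_K(M_{nP})$ to stabilise weight by weight at those of $\qfor_K(M)$, which is the natural notion of convergence for a sequence in $\Rfor(K)$.

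First, I would invoke quantisation-commutes-with-reduction for the compact, possibly singular, Hamiltonian $K$-manifold $M_{nP}$ to rewrite
$$
\Qcal_K(M_{nP})=\sum_{\mu\in\what{K}}\Qcal((M_{nP})_\mu)\,V_\mu^K.
$$
When $M_{nP}$ is smooth this is the Meinrenken-Sjamaar theorem; in general I would apply Theorem \ref{theo:Q-sing} twice, first to $N=M\times \Xcal_{nP}$ with $K_1 = (K,\cdot_1)$ in order to define $\Qcal_K(M_{nP})$ as the $K_1$-invariant part of $\Qcal^{\Phi_1}_{K\times K}(N)$, and then once more with respect to the residual $(K,\cdot_2)$-action to read off the $V_\mu^K$-multiplicities.

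Next, I would identify $(M_{nP})_\mu$ with $M_\mu$ whenever $\mu\in nP^\circ$. Property (4) identifies $\Xcal_{nP}^\circ$ with $K\times \Ucal_{nP}$ so that $\Phi_r(k,\xi)=-\xi$; hence on $M\times \Xcal_{nP}^\circ$ the zero set of $\Phi_1=\Phi_M+\Phi_r$ is exactly $\{(m,[k,\Phi_M(m)]):m\in\Phi_M^{-1}(\Ucal_{nP}),\,k\in K\}$, and quotienting by the $\cdot_1$-action produces an open dense smooth subset of $M_{nP}$ equivariantly quasi-symplectomorphic to $\Phi_M^{-1}(\Ucal_{nP})$ with residual moment map $\Phi_M$. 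For $\mu\in nP^\circ$ the orbit $K\cdot\mu$ is contained in $\Ucal_{nP}$, so the fibre $\Phi_{M_{nP}}^{-1}(K\cdot\mu)$ lies entirely in this open dense part and $(M_{nP})_\mu$ coincides with $M_\mu$ as a set. Combining property (7) with the homotopy of symplectic forms from property (5), the reduced prequantum line bundles and symplectic forms on the two sides agree up to a homotopy through prequantised structures, and the invariance of the equivariant Dolbeault-Dirac index under such deformations gives $\Qcal((M_{nP})_\mu)=\Qcal(M_\mu)$.

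The main obstacle will be this second step, and specifically the verification that the quasi-symplectomorphism $\Upsilon$ and the trivialisation of $L_{nP}\vert_{\Xcal_{nP}^\circ}$ descend cleanly to reduction at a non-zero level $K\cdot\mu$, particularly when $\mu$ is not a regular value of $\Phi_M$ and the reductions are singular stratified spaces rather than smooth orbifolds. Once that descent is secured, combining it with the first step and the exhaustion $\tgot^* = \bigcup_n nP^\circ$ gives, weight by weight, the stabilisation of coefficients that yields the proposition.
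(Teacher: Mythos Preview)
The proposition is quoted from \cite{pep-formal} and is not proved in the present paper, so there is no in-text argument to compare against directly. Your outline is exactly the argument given in \cite{pep-formal}: one shows that for every $\mu\in nP^\circ$ the reduction $(M_{nP})_\mu$ agrees with $M_\mu$, so that the $V_\mu^K$-multiplicity in $\Qcal_K(M_{nP})$ equals $\Qcal(M_\mu)$ once $n$ is large enough, and the limit follows weight by weight.

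Your handling of the main obstacle is also the right one and can be made clean. Since $\mu$ lies in the \emph{open} set $nP^\circ$, the generic nearby element $a$ used in Theorem~\ref{theo:Q-sing} to define $\Qcal((M_{nP})_\mu)$ can be chosen inside $nP^\circ$ as well; the entire fibre $\Phi_{M_{nP}}^{-1}(K\cdot a)$ then sits in the open dense smooth locus identified via $\Upsilon$ with $\Phi_M^{-1}(\Ucal_{nP})$, and by properties (5) and (7) the reduced orbifold, line bundle, and compatible almost complex structure on $(M_{nP})_a$ match those on $M_a$ up to homotopy. The index being a deformation invariant, $\Qcal((M_{nP})_\mu)=\Qcal((M_{nP})_a)=\Qcal(M_a)=\Qcal(M_\mu)$. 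The reduction-in-stages step (reading off the $V_\mu^K$-multiplicity of $\Qcal_K(M_{nP})$ as a further symplectic quotient) is precisely the content of the ``quantization commutes with reduction in the singular setting'' package in \cite{pep-formal}, so invoking Theorem~\ref{theo:Q-sing} twice, as you do, is legitimate.
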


\medskip

\section{Proof of Theorem \ref{theo:intro} }\label{sec:preuve}

The main result of this section is 

\begin{theo}\label{theo:fondamental}
Let $r_P:=\inf_{\xi\in\partial P}\|\xi\|$. The generalized character  
$$
\Qcal^\Phi_K(M)-\Qcal_K(M_P)\quad \in\Rfor(K)
$$
is supported outside the ball $B_{r_P}$.
\end{theo}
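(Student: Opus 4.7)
The strategy is to compare both characters via their critical-set decompositions and to absorb the ``far-away'' terms into $O(r_P)$ using Theorem \ref{prop:Q-beta-support}. By Definition \ref{def:Q-Phi}, $\Qcal^\Phi_K(M) = \sum_{\beta \in \Bcal} \Qcal^\beta_K(M)$, and since $B_{r_P} \subset P^\circ$ (as $P$ is convex with $0 \in P^\circ$), the indices $\beta \in \Bcal$ with $\|\beta\| < r_P$ all lie in $P^\circ$, while every contribution with $\|\beta\| \geq r_P$ is automatically supported outside $B_{r_P}$. Thus modulo $O(r_P)$,
\[
\Qcal^\Phi_K(M) \equiv \sum_{\beta \in \Bcal,\ \|\beta\| < r_P} \Qcal^\beta_K(M),
\]
and each such $Z_\beta$ is contained in the open subset $\Phi_M^{-1}(\Ucal_P)$.

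For $\Qcal_K(M_P)$, I would use Definition \ref{def:quant-sing} to write $\Qcal_K(M_P) = \bigl[\Qcal^{\Phi_1}_{K \times K}(M \times \Xcal_P)\bigr]^{K_1}$, and decompose it into contributions from the critical components of $\|\Phi_1\|^2$ on $M \times \Xcal_P$. After discarding the terms supported outside $B_{r_P}$, the surviving contributions correspond to critical components whose residual $K_2$-moment-map image lies in $P^\circ$, and therefore sit inside the open piece $M \times \Xcal_P^\circ$. The key identification step will then use properties (4)--(7) of Section \ref{subsec:wonderfull}: the diffeomorphism $\Upsilon$ produces a $K$-equivariant quasi-symplectomorphism $\Psi$ from $\Phi_M^{-1}(\Ucal_P) \subset M$ onto an open subset of $M_P$, intertwining the moment maps and, by property (7), the Kostant-Souriau line bundles. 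Hence the hypotheses of Proposition \ref{prop:psi-invariance} are met, and for each $\beta \in P^\circ \cap \Bcal$ a relatively compact invariant neighborhood of $Z_\beta$ in $M$ gets matched with a neighborhood of $\Psi(Z_\beta)$ in $M_P$, yielding $\Qcal^\beta_K(M) = \Qcal^\beta_K(M_P)$. Summing these identifications then gives $\Qcal^\Phi_K(M) - \Qcal_K(M_P) = O(r_P)$, which is the statement.

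The principal obstacle I anticipate is the bookkeeping in the second paragraph: one must verify that no ``extra'' critical component of $\|\Phi_{M_P}\|^2$ sits above $B_{r_P}$ apart from the images $\Psi(Z_\beta)$. Concretely, any critical component of $\|\Phi_1\|^2$ on $M \times \Xcal_P$ arising from the compactification divisor $\Xcal_P \setminus \Xcal_P^\circ$ should have its $\Phi_2$-moment map image on $K \cdot \partial P$, and therefore correspond to $\|\beta'\| \geq r_P$. Establishing this rigorously requires the local normal form of the wonderful compactification near $\Xcal_P \setminus \Xcal_P^\circ$ together with a careful description of the singular strata of $M_P$ produced by the reduction at $0$; this is where the technical weight of the argument lies, after which the conclusion follows directly from Proposition \ref{prop:psi-invariance} and Theorem \ref{prop:Q-beta-support}.
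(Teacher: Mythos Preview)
Your outline has a genuine gap in the second paragraph, and the obstacle you flag in the third paragraph is not the real one. You write $\Qcal_K(M_P)=[\Qcal^{\Phi_1}_{K\times K}(M\times\Xcal_P)]^{K_1}$ and then speak of ``discarding the terms supported outside $B_{r_P}$'' in the $K_2$-variable. But the decomposition of $\Qcal^{\Phi_1}_{K\times K}(M\times\Xcal_P)$ via $\Cr(\|\Phi_1\|^2)$ is indexed by $\beta_1\in\kgot_1^*$, and Theorem~\ref{prop:Q-beta-support-1} controls only the $K_1$-support of $\Qcal^{\beta_1}_{K\times K}(M\times\Xcal_P)$: it says nothing whatsoever about the $K_2$-support. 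After taking $K_1$-invariants you are left with the single term $\beta_1=0$, whose $K_2$-character can a priori be supported anywhere. So there is no mechanism, in your scheme, for separating the contribution of the compactification divisor from the interior, and the geometry of $\Xcal_P\setminus\Xcal_P^\circ$ is not what is missing. What you would need is a Witten-type decomposition of $\Qcal_K(M_P)$ via $\|\Phi_{M_P}\|^2$, but $M_P$ is in general singular and no such decomposition is set up in the paper; alternatively you would need $[\Qcal^{\Phi_1}_{K\times K}(M\times\Xcal_P)]^{K_1}=[\Qcal^{\Phi}_{K\times K}(M\times\Xcal_P)]^{K_1}$, but that equality is essentially Theorem~\ref{theo:Phi=Phi_1}, whose proof relies on Theorem~\ref{theo:intro} and would be circular here.

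The paper's proof gets around exactly this obstruction by introducing a parameter $\rho>0$ and working with $\|\Phi\|_\rho^2=\|\Phi_1\|^2+\rho\|\Phi_2\|^2$ on $M\times\Xcal_P$. For $\rho>0$ the critical-set decomposition is indexed by $\gamma=(\gamma_1,\gamma_2)$, and now Theorem~\ref{prop:Q-beta-support} \emph{does} control the $K_2$-support of $[\Qcal^{\gamma,\rho}_{K\times K}(M\times\Xcal_P)]^{K_1}$, namely by $\{\rho\|b\|^2\geq\|\gamma\|_\rho^2\}$. The price is that one must then compare $\Qcal^{\Phi,\rho}$ with $\Qcal^{\Phi_1}$ (Proposition~\ref{prop:etape3}, done by a homotopy of symbols valid only for $\rho$ small, producing an error $O((\epsilon/\rho)^{1/2})$), and separately compare $\Qcal^{\Phi,\rho}$ on $M\times\Xcal_P$ with $\Qcal^{\Phi,\rho}$ on the auxiliary space $M\times\T^*K$ (Proposition~\ref{prop:etape2}, via $\Upsilon$ and Proposition~\ref{prop:psi-invariance}), the latter being linked to $\Qcal^\Phi_K(M)$ directly (Proposition~\ref{prop:etape1}). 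The two-step passage through $M\times\T^*K$ and the $\rho$-deformation are the missing ideas in your proposal.
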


Then, for the dilated polytope $nP, n\geq 1$, the character $\Qcal^\Phi_K(M)-\Qcal^K(M_{nP})$ is supported outside the ball $B_{nr_P}$. 
Taking the limit when $n$ goes to infinity gives 
\begin{equation}\label{eq:Q-Phi-lim}
\Qcal^\Phi_K(M)=\lim_{n\to\infty}\Qcal_K(M_{nP}).
\end{equation}
Finally, the identity of Theorem \ref{theo:intro},
$$
\Qcal^\Phi_K(M)=\qfor_K(M), 
$$
is a direct consequence of (\ref{eq:defi-2-qfor}) and (\ref{eq:Q-Phi-lim}).

\bigskip

Recall that $O(r)\in \Rfor(K)$ denoted any generalized character supported outside the ball $B_r$. 

\bigskip

Theorem \ref{theo:fondamental}  follows from the comparison of three differents geometrical situation. All of them concern 
Hamiltonian actions of $K_1\times K_2$, where $K_1$ and $K_2$ are two copies of $K$.

\medskip

\medskip 

{\bf First setting.} We work with the Hamiltonian $K_1\times K_2$-manifold $M\times \Xcal_P$: here $K_1$ acts both on $M$ and on 
$\Xcal_P$. Since the moment map $\Phi_1$ (relative to the $K_1$-action)  
is proper we may``quantize''  $M\times \Xcal_P$ via the map  $\|\Phi_1\|^2$ : let  
$$
\Qcal^{\Phi_1}_{K_1\times K_2}(M\times \Xcal_P)\in \Rfor(K_1\times K_2)
$$
be the corresponding generalized character. Recall that $\Qcal_{K_2}(M_P)$ is equal to \break 
$[\Qcal^{\Phi_1}_{K_1\times K_2}(M\times \Xcal_P)]^{K_1}$.

\medskip

\medskip 

{\bf Second setting.}
We consider the same setting than before : the Hamiltonian action of $K_1\times K_2$ on $M\times \Xcal_P$. But 
we ``quantize''  $M\times \Xcal_P$ through the global moment map $\Phi=(\Phi_1,\Phi_2)$. Here we have some liberty 
in the choice of the scalar product on $\kgot^*_1\times\kgot^*_2$. If $\|\xi\|^2$ is an invariant Euclidean norm on $\kgot^*$, 
we take on $\kgot^*_1\times\kgot^*_2$ the Euclidean norm 
\begin{equation}\label{eq:norm-rho}
\|(\xi_1,\xi_2)\|^2_\rho=\|\xi_1\|^2+\rho\|\xi_2\|^2
\end{equation}
depending on a parameter $\rho>0$. Let us consider the quantization of $M\times \Xcal_P$ via the map $\|\Phi\|^2_\rho$: 
$$
\Qcal^{\Phi,\rho}_{K_1\times K_2}(M\times \Xcal_P)\in \Rfor(K_1\times K_2).
$$

\medskip 

\medskip

{\bf Third setting.} We consider the cotangent bundle $\T^*K$ with the Hamiltonian action of $K_1\times K_2$: $K_1$ acts 
by {\em right} translations, and $K_2$ by {\em left} translations. We consider the Hamiltonian action of 
$K_1\times K_2$ on $M\times\T^* K$ : here $K_1$ acts both on $M$ and on $\T^*K$. Let $\Phi=(\Phi_1,\Phi_2)$ be the 
global moment map on $M\times\T^* K$. Since the moment map $\Phi$ is 
proper we can ``quantize''  $M\times \T^*K$ via the map  $\|\Phi\|_\rho^2$ : let  
$$
\Qcal^{\Phi,\rho}_{K_1\times K_2}(M\times \T^*K)\in \Rfor(K_1\times K_2)
$$
be the corresponding generalized character.

\medskip

\medskip

Theorem \ref{theo:fondamental} is a consequence of the following propositions.

\medskip

First we compare $\Qcal^{\Phi}_{K_2}(M)$ with the $K_1$-invariant part of $\Qcal^{\Phi,\rho}_{K_1\times K_2}(M\times \T^*K)$.

\begin{prop}\label{prop:etape1}
For any $\rho \in ]0,1]$, we have 
\begin{equation}\label{eq:etape1}
\left[\Qcal^{\Phi,\rho}_{K_1\times K_2}(M\times \T^*K)\right]^{K_1}=\Qcal^{\Phi}_{K_2}(M) \quad {\rm in}\quad \Rfor(K_2).
\end{equation}
\end{prop}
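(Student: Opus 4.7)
My plan is to locate the critical set of $\|\Phi\|^2_\rho$ on $M\times\T^*K$, decompose $\Qcal^{\Phi,\rho}_{K_1\times K_2}(M\times\T^*K)$ by excision, and match each $\beta$-contribution with $\Qcal^\beta_{K_2}(M)$ using a local normal form and the multiplicative property of the index. Using the left trivialization $\T^*K\simeq K\times\kgot^*$, the moment maps are $\Phi_1(m,k,\xi)=\Phi_M(m)-\xi$ and $\Phi_2(m,k,\xi)=k\cdot\xi$, so $\|\Phi\|^2_\rho=\|\Phi_M(m)-\xi\|^2+\rho\|\xi\|^2$ by $K$-invariance of the norm. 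A direct calculation of the Kirwan vector field shows $\kappa_\rho(m,k,\xi)=0$ if and only if $\xi=(1+\rho)^{-1}\Phi_M(m)$ and $m\in\Cr(\|\Phi_M\|^2)$, with $k\in K$ unconstrained. Hence $\Cr(\|\Phi\|^2_\rho)=\bigsqcup_{\beta\in\Bcal}\tilde Z^\rho_\beta$ with $\tilde Z^\rho_\beta\simeq Z_\beta\times K$, so the $\beta$-components of the critical set on $M\times\T^*K$ are indexed by the same $\Bcal$ as on $M$.

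Defining $\Qcal^{\beta,\rho}_{K_1\times K_2}(M\times\T^*K)$ as the index of $\clif^{\kappa_\rho}$ on an invariant neighborhood of $\tilde Z^\rho_\beta$, excision gives $\Qcal^{\Phi,\rho}_{K_1\times K_2}(M\times\T^*K)=\sum_{\beta}\Qcal^{\beta,\rho}_{K_1\times K_2}(M\times\T^*K)$, so since $\Qcal^\Phi_{K_2}(M)=\sum_\beta\Qcal^\beta_{K_2}(M)$, it suffices to prove $\bigl[\Qcal^{\beta,\rho}_{K_1\times K_2}(M\times\T^*K)\bigr]^{K_1}=\Qcal^\beta_{K_2}(M)$ for each $\beta$. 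I would apply Proposition~\ref{prop:psi-invariance} via the equivariant shift $\Psi(m,k,\xi)=(m,k,\xi-(1+\rho)^{-1}\Phi_M(m))$, which carries $\tilde Z^\rho_\beta$ onto $Z_\beta\times K\times\{0\}$ and matches the moment maps and Kostant--Souriau line bundles up to a homotopy of symplectic forms. In this product model the pushed symbol deforms through $K_1\times K_2$-transversally elliptic symbols into an external product $\sigma_M\odot\sigma_{\T^*K}$; Theorem~\ref{theo:multiplicative-property} then yields
$$\Qcal^{\beta,\rho}_{K_1\times K_2}(M\times\T^*K)=\Qcal^\beta_{K_2}(M)\otimes\indice^{K_1\times K_2}_{\T^*K}(\sigma_{\T^*K}).$$
The remaining factor is $\Qcal^\Phi_{K_1\times K_2}(\T^*K)=L^2(K)=\bigoplus_\mu V_\mu^{K_2}\otimes(V_\mu^{K_1})^*$ (the $H=\{e\}$ case of (\ref{eq:Q-Phi-G-G/H}), extended to the two-sided action), and $[L^2(K)]^{K_1}=\C$ as $K_2$-module by Peter--Weyl, completing the $\beta$-wise identity.

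The main obstacle is the decoupling step: $\kappa_\rho$ genuinely mixes the $M$ and $\T^*K$ factors along $\tilde Z^\rho_\beta$ through the constraint $\xi=(1+\rho)^{-1}\Phi_M(m)$, so producing a homotopy of $K_1\times K_2$-transversally elliptic symbols that realises $\clif^{\kappa_\rho}$ as an external product -- and verifying the hypotheses of Proposition~\ref{prop:psi-invariance} for $\Psi$ (the pullback symplectic form is homotopic to the product form, and $\Psi^*L_{M\times\T^*K}\simeq L\boxtimes L_{\T^*K}$) -- is the real technical work. The hypothesis $\rho\in\,]0,1]$ enters to guarantee that the straight-line homotopy in $\rho'$ toward the symmetric value $\rho'=1$ keeps all critical data inside the identified components $\tilde Z^{\rho'}_\beta$, so that transversal ellipticity is preserved throughout the deformation.
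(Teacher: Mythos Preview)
Your overall strategy---identify $\Cr(\|\Phi\|^2_\rho)$ as indexed by the same $\Bcal$, decouple the pushed symbol into an external product, apply the multiplicative property, and use $[{\rm L}^2(K)]^{K_1}=\C$---is correct and matches the paper's endpoint. The execution differs in two places worth noting.

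The paper does not work $\beta$-by-$\beta$. Instead it observes that $U_R=\{\|\Phi\|^2_\rho<R\}\subset V_{R'}:=\{\|\Phi_M\|^2<R'\}\times\T^*K$ for suitable $R'$, so the limit defining $\Qcal^{\Phi,\rho}$ can be computed on the product-type open sets $V_{R'}$ directly. The decoupling is then achieved by an explicit homotopy of \emph{vector fields}: one deforms $\kappa_\rho$ through $\kappa_\rho(s)=(\kappa_I^s,\kappa_{II,\rho}^s,s\kappa_{III})$, $s\in[0,1]$, with $\kappa_I^s(m,\xi)=(\Phi_M(m)-s\xi)\cdot m$ and $\kappa_{II,\rho}^s(m,\xi)=s\wtde{\Phi}_M(m)-(1+s\rho)\wtde{\xi}$, checking that $\Char(\clif^{\kappa_\rho(s)}|_{V_{R'}})\cap\T_{K_1\times K_2}V_{R'}$ stays in a compact set. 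At $s=0$ the symbol is visibly the product $\clif_1^\kappa|_{M_{<R'}}\odot\clif_2^\kappa$. This is exactly the ``real technical work'' you flagged; the paper resolves it by exhibiting the homotopy rather than via a coordinate change.

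Your proposed route through $\Psi$ has a concrete gap: the shift does \emph{not} satisfy $\Psi^*\Phi=\Phi$ (for instance $\Phi_1\circ\Psi(m,k,\xi)=(1+(1+\rho)^{-1})\Phi_M(m)-\xi\neq\Phi_1(m,k,\xi)$), so Proposition~\ref{prop:psi-invariance} cannot be invoked as stated; one would have to argue the symbol-class equality directly, which amounts to the paper's vector-field homotopy anyway. Finally, your reading of the hypothesis $\rho\in\,]0,1]$ is off: nothing in the argument uses it, and the proof goes through for any $\rho>0$; the bound is only present so that Propositions~\ref{prop:etape1}--\ref{prop:etape3} can be combined with a common small $\rho$ in the proof of Theorem~\ref{theo:fondamental}.
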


\medskip

Then we compare the $K_1$-invariant part of the generalized characters \break  
$\Qcal^{\Phi,\rho}_{K_1\times K_2}(M\times \T^*K)$ and $\Qcal^{\Phi,\rho}_{K_1\times K_2}(M\times \Xcal_P)$.

\begin{prop}\label{prop:etape2}
For any $\rho \in ]0,1]$, we have the following relation in $\Rfor(K_2)$ 
\begin{equation}\label{eq:etape2}
\left[\Qcal^{\Phi,\rho}_{K_1\times K_2}(M\times \Xcal_P) \right]^{K_1} -
\left[\Qcal^{\Phi,\rho}_{K_1\times K_2}(M\times \T^*K)\right]^{K_1}
= O(r_P)
\end{equation}
\end{prop}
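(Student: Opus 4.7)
The plan is to decompose both sides of \eqref{eq:etape2} along the critical set of $\|\Phi\|^2_\rho$, separating ``interior'' critical components (where the two quantizations agree termwise via $\Upsilon$) from ``boundary'' components (where each contribution is individually $O(r_P)$). Write
$$
\Qcal^{\Phi,\rho}_{K_1\times K_2}(N)=\sum_{\beta}\Qcal^{\beta,\rho}_{K_1\times K_2}(N)
$$
for $N=M\times\Xcal_P$ and for $N=M\times\T^*K$, indexed by the dominant parameters $\beta=(\beta_1,\beta_2)$ that label the components $Z_\beta$ of $\Cr(\|\Phi\|^2_\rho)$. Since $\Phi_2(Z_\beta)\subset K\cdot\beta_2$ and, by property (2) of $\Xcal_P$, the image $\Phi_l(\Xcal_P)$ equals $K\cdot P$, only $\beta_2\in P$ can occur on the $\Xcal_P$ side. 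The argument then splits into the cases $\beta_2\in P^\circ$ and $\beta_2\notin P^\circ$.

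For interior components, $\Phi_l^{-1}(K\cdot P^\circ)=\Xcal_P^\circ$ (and its analogue on the cotangent side) forces $Z_\beta\subset M\times\Xcal_P^\circ$, respectively $Z'_\beta\subset M\times K\times\Ucal_P$. By properties (4)--(7), the map $\mathrm{id}_M\times\Upsilon$ is a $K_1\times K_2$-equivariant diffeomorphism from $M\times K\times\Ucal_P$ onto $M\times\Xcal_P^\circ$ that intertwines the moment maps, identifies the prequantum line bundles (the bundle on the cotangent side being trivial, matching $\Upsilon^*L_P$), and preserves the symplectic form up to homotopy. Applied to sufficiently small invariant neighborhoods of each pair $(Z_\beta,Z'_\beta)$, Proposition~\ref{prop:psi-invariance} yields
$$
\Qcal^{\beta,\rho}_{K_1\times K_2}(M\times\Xcal_P)=\Qcal^{\beta,\rho}_{K_1\times K_2}(M\times\T^*K)\quad\text{whenever } \beta_2\in P^\circ,
$$
so all interior contributions cancel in the difference.

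For boundary components, the assumption $0\in P^\circ$ together with the $W$-invariance of $P$ gives $B_{r_P}\subset P^\circ$, hence $\|\beta_2\|\geq r_P$. I will then adapt the polarization argument of Theorem~\ref{prop:Q-beta-support} to the $\rho$-weighted setting to obtain the following support bound: an irreducible $V_{\mu_1}^{K_1}\otimes V_{\mu_2}^{K_2}$ can occur in $\Qcal^{\beta,\rho}_{K_1\times K_2}(N)$ only if
$$
(\mu_1,\beta_1)+\rho(\mu_2,\beta_2)\geq \|\beta_1\|^2+\rho\|\beta_2\|^2.
$$
Setting $\mu_1=0$ and applying Cauchy--Schwarz yields $\|\mu_2\|\geq\|\beta_2\|\geq r_P$, so $[\Qcal^{\beta,\rho}_{K_1\times K_2}(N)]^{K_1}$ is supported outside $B_{r_P}$ on both sides of \eqref{eq:etape2}. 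Since the index set of boundary $\beta$'s contributing below any given $K_2$-weight level is finite (the analogue of the finiteness of $\Bcal\cap B_r$), the sum over boundary components converges in $\Rfor(K_2)$, is still supported outside $B_{r_P}$, and hence is $O(r_P)$.

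The main obstacle is this weighted support estimate. The proofs of Theorems~\ref{prop:Q-beta-support} and \ref{prop:Q-beta-support-1} recalled in Section~\ref{sec:Qcal-Phi} are written for the standard invariant inner product on the Lie algebra of a single compact group; here one works with $\kgot_1^*\times\kgot_2^*$ equipped with the weighted metric \eqref{eq:norm-rho}, so that the Kirwan vector field at a critical point of type $\beta$ is generated by $\tilde\beta=(\tilde\beta_1,\rho\tilde\beta_2)$. One must check that the localization formula \eqref{eq.localisation.2} and the holomorphic induction from the symplectic slice \eqref{eq:induction-slice} go through with this modified generator, and that the $\tore_{\tilde\beta}$-polarization of $[\wedge^\bullet\overline{\Ncal}]^{-1}_\beta$ still translates into the inequality $(\mu,\beta)_\rho\geq\|\beta\|^2_\rho$ on the highest weights appearing. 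Once this step is verified, the identification of the interior critical components via $\Upsilon$ and their cancellation are essentially formal.
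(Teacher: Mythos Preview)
Your strategy is the same as the paper's: split the critical set of $\|\Phi\|^2_\rho$ into an ``interior'' part living over $\Xcal_P^\circ\simeq K\times\Ucal_P$ (where Proposition~\ref{prop:psi-invariance} via $\mathrm{id}_M\times\Upsilon$ forces the two quantizations to agree) and a ``boundary'' part (where the support estimate of Theorem~\ref{prop:Q-beta-support} kills the $K_1$-invariants below $r_P$). The paper packages the interior step slightly more efficiently by working with a single sublevel set $\{\|\Phi\|^2_\rho<R_P\}$ with $R_P<\rho r_P^2$ on each side and applying Proposition~\ref{prop:psi-invariance} once, rather than matching components one at a time; since $\|\Phi\|^2_\rho<\rho r_P^2$ forces $\|\Phi_l\|<r_P$ and hence lands in $\Xcal_P^\circ$, this single cutoff already isolates the interior.

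Your ``main obstacle'' is not one. The weighted norm $\|\cdot\|_\rho$ is a $K_1\times K_2$-invariant Euclidean norm on $\kgot_1^*\times\kgot_2^*$, so Theorem~\ref{prop:Q-beta-support} applies verbatim and gives the quadratic bound $\|\mu_1\|^2+\rho\|\mu_2\|^2\geq\|\beta_1\|^2+\rho\|\beta_2\|^2$ for any $(\mu_1,\mu_2)$ in the support of $\Qcal^{\beta,\rho}_{K_1\times K_2}(N)$. Setting $\mu_1=0$ immediately gives $\|\mu_2\|\geq\|\beta_2\|\geq r_P$, with no Cauchy--Schwarz needed. The sharper linear inequality you propose, $(\mu_1,\beta_1)+\rho(\mu_2,\beta_2)\geq\|\beta\|^2_\rho$, is what the localization argument yields when $\beta$ is central, but after holomorphic induction from the symplectic slice in the non-central case one only recovers $\|\mu\|_\rho\geq\|\beta\|_\rho$ (cf.\ the Weyl-shift computation at the end of the proof of Theorem~\ref{prop:Q-beta-support}); fortunately this weaker quadratic bound is all you need.
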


\medskip

Finally we compare the $K_1$-invariant part of the generalized characters \break 
$\Qcal^{\Phi,\rho}_{K_1\times K_2}(M\times \Xcal_P)$ and $\Qcal^{\Phi_1}_{K_1\times K_2}(M\times \Xcal_P)$.

\begin{prop}\label{prop:etape3}
There exists  $\epsilon>0$ such that 
\begin{equation}\label{eq:etape3}
\Qcal_{K_2}(M_P)-\left[\Qcal^{\Phi,\rho}_{K_1\times K_2}(M\times \Xcal_P)\right]^{K_1}=  
O((\epsilon/\rho)^{1/2})\quad {\rm in}\quad \Rfor(K_2)
\end{equation}
if $\rho>0$ is small enough.
\end{prop}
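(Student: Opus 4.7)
My plan is to compare the two characters via localization on a small invariant neighborhood of $Z_0:=\Phi_1^{-1}(0)$. The guiding observation is that by Theorem~\ref{prop:Q-beta-support-1} all $\beta_1\neq 0$ contributions to $[\Qcal^{\Phi_1}_{K_1\times K_2}(M\times\Xcal_P)]^{K_1}$ vanish, so $\Qcal_{K_2}(M_P)$ equals the single piece coming from $\beta_1=0$; the task then reduces to showing that $[\Qcal^{\Phi,\rho}_{K_1\times K_2}(M\times\Xcal_P)]^{K_1}$ matches this piece modulo an error supported at high $K_2$-weights. Let $\epsilon_1:=\min\{\|\beta_1\|^2 : \beta_1\in\Bcal_1\setminus\{0\}\}$, which is strictly positive because the critical values of $\|\Phi_1\|^2$ form a discrete sequence. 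Fix any $\epsilon\in(0,\epsilon_1)$ and set $\Ucal_0:=\Phi_1^{-1}(\{\|\cdot\|^2<\epsilon\})$: properness of $\Phi_1$ makes $\Ucal_0$ a relatively compact $K_1\times K_2$-invariant open subset, and the choice of $\epsilon$ ensures $\overline{\Ucal_0}\cap\Cr(\|\Phi_1\|^2)=Z_0$. Consequently $\Qcal^{0}_{K_1\times K_2}(M\times\Xcal_P) = \indice^{K_1\times K_2}_{\Ucal_0}(\clif^{\kappa_1}\vert_{\Ucal_0})$ and
\[
\Qcal_{K_2}(M_P) = \bigl[\indice^{K_1\times K_2}_{\Ucal_0}(\clif^{\kappa_1}\vert_{\Ucal_0})\bigr]^{K_1}.
\]

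The core step is a homotopy argument on $\Ucal_0$. Writing $\kappa_\rho=\kappa_1+\rho\kappa_2$, I would interpolate via $\kappa^s:=\kappa_1+s\rho\kappa_2$ for $s\in[0,1]$. Since $\partial\Ucal_0$ is compact and disjoint from $\Cr(\|\Phi_1\|^2)$, the quantity $c_1:=\min_{\partial\Ucal_0}\|\kappa_1\|$ is strictly positive. Since $\kappa_2(m,x)$ depends only on the $\Xcal_P$-variable and $\Xcal_P$ is compact, $C:=\|\kappa_2\|_{\infty}<\infty$. For any $\rho<c_1/C$, the inequality $\|\kappa^s\|\geq c_1-s\rho C>0$ holds on $\partial\Ucal_0$ uniformly in $s\in[0,1]$. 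Hence the characteristic sets $\Char(\clif^{\kappa^s})\cap \T_{K_1\times K_2}\Ucal_0=\{\kappa^s=0\}$ remain compact in $\Ucal_0$ throughout the homotopy, so $\{\clif^{\kappa^s}\vert_{\Ucal_0}\}$ is a homotopy of $K_1\times K_2$-transversally elliptic symbols with constant equivariant index:
\[
\indice^{K_1\times K_2}_{\Ucal_0}(\clif^{\kappa_\rho}\vert_{\Ucal_0})=\indice^{K_1\times K_2}_{\Ucal_0}(\clif^{\kappa_1}\vert_{\Ucal_0}).
\]

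To conclude, I would apply the excision property to $\Qcal^{\Phi,\rho}_{K_1\times K_2}(M\times\Xcal_P)$. Since no critical component of $\|\Phi\|_\rho^2$ meets $\partial\Ucal_0$ (by the same $\rho<c_1/C$ estimate), every $Z^\rho_\beta$ is either contained in $\Ucal_0$ or in $M\times\Xcal_P\setminus\overline{\Ucal_0}$, and
\[
\Qcal^{\Phi,\rho}_{K_1\times K_2}(M\times\Xcal_P) = \indice^{K_1\times K_2}_{\Ucal_0}(\clif^{\kappa_\rho}\vert_{\Ucal_0}) + \sum_{Z^\rho_\beta \cap \Ucal_0=\emptyset}\Qcal^{\beta,\rho}_{K_1\times K_2}(M\times\Xcal_P).
\]
Each outside component lies in $\{\|\Phi_1\|^2\geq\epsilon\}$, so its critical value $\beta=(\beta_1,\beta_2)$ satisfies $\|\beta_1\|^2\geq\epsilon$. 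Applying Theorem~\ref{prop:Q-beta-support} to the $K_1\times K_2$-action with the $\rho$-scaled norm gives support at weights $(\lambda_1,\lambda_2)$ satisfying $\|\lambda_1\|^2+\rho\|\lambda_2\|^2\geq\|\beta_1\|^2+\rho\|\beta_2\|^2$; imposing $\lambda_1=0$ forces $\|\lambda_2\|\geq(\epsilon/\rho)^{1/2}$. Hence the outside sum is $O((\epsilon/\rho)^{1/2})$ after taking $K_1$-invariants, and combining with the homotopy identity from the previous paragraph yields the claim.

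The main obstacle I anticipate is verifying uniform transversal ellipticity of the homotopy $\kappa^s$, i.e.\ that $\{\kappa^s=0\}\cap\Ucal_0$ stays compact in $\Ucal_0$ for every $s\in[0,1]$. The two essential geometric inputs are the spectral gap $\epsilon_1>0$, which provides the lower bound on $\|\kappa_1\|$ on $\partial\Ucal_0$, and the compactness of $\Xcal_P$, which produces the global bound $C$ on $\kappa_2$ despite $M$ itself being non-compact. A secondary point is the convergence of the infinite excision sum in $\Rfor(K_1\times K_2)$, which follows from the fact that the critical values of $\|\Phi\|_\rho^2$ form a discrete sequence tending to infinity.
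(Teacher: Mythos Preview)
Your proposal is correct and follows essentially the same route as the paper. Both arguments localize to the relatively compact open set $U_\epsilon=\{\|\Phi_1\|^2<\epsilon\}$, perform the linear homotopy $\kappa_1+s\rho\,\kappa_2$ using that $\inf_{\partial U_\epsilon}\|\kappa_1\|>0$ (spectral gap in the critical values of $\|\Phi_1\|^2$) together with $\sup\|\kappa_2\|<\infty$ (compactness of $\Xcal_P$), then split $\Qcal^{\Phi,\rho}_{K_1\times K_2}(M\times\Xcal_P)$ by excision into the $U_\epsilon$-piece plus the components with $\|\gamma_1\|^2\geq\epsilon$, and finally invoke Theorem~\ref{prop:Q-beta-support} with the $\rho$-scaled norm to see that those outside pieces contribute $O((\epsilon/\rho)^{1/2})$ after taking $K_1$-invariants.
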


\medskip

If we sum the relations (\ref{eq:etape1}), (\ref{eq:etape2}) and (\ref{eq:etape3}) we get 
$$
\Qcal^{\Phi}_{K_2}(M) = \Qcal_{K_2}(M_P) + O(r_P)+ O((\epsilon/\rho)^{1/2})
$$
if $\rho$ is small enough. So Theorem \ref{theo:fondamental} follows by taking $(\epsilon/\rho)^{1/2}\geq r_P$.

\medskip

\subsection{Proof of Proposition \ref{prop:etape1}}

The cotangent bundle $\T^*K$ is identified with $K\times \kgot^*$. The data is then (see Section \ref{sec:TK}):

$\bullet$ the Liouville $1$-form $\lambda=\sum_j \omega_j\otimes E_j$. Here $(E_j)$ is a basis of $\kgot$ with dual basis 
$(E_j^*)$, and $\omega_j$ is the left invariant $1$-form on $K$ defined by 
$\omega_j(\frac{d}{dt} a\,e^{tX}\vert_{0})=\langle E_j^*,X\rangle$. 

$\bullet$ the symplectic form $\Omega:=-d\lambda$,

$\bullet$ the action of $K_1\times K_2$ on $K\times \kgot^*$ is  $(k_1,k_2)\cdot (a,\xi)= (k_2 a k_1^{-1}, k_1\cdot \xi)$,

$\bullet$ the moment map relative to the $K_1$-action is $\Phi_r(a,\xi)=-\xi$,

$\bullet$ the moment map relative to the $K_2$-action is $\Phi_l(a,\xi)=a\cdot\xi$.

\medskip

 We work now with the Hamiltonian action of $K_1\times K_2$ on $M\times\T^* K$ given by 
$$
(k_1,k_2)\cdot (m,a,\xi)= (k_1\cdot m,k_2 a k_1^{-1}, k_1\cdot \xi).
$$
The corresponding  moment map is $\Phi=(\Phi_1,\Phi_2)$: 
$\Phi_1(m,a,\xi)=\Phi_M(m)-\xi$ and $\Phi_2(m,a,\xi)=a\cdot\xi$.

Let $\clif_1$ be a symbol $\Thom(M,J_1)\otimes L$ attached to the prequantized Hamiltonian $K_1$-manifold $(M,\Omega)$. 
The cotangent bundle $\T^*K$ is prequantized by the trivial line bundle: let $\clif_2$ be the 
symbol $\Thom(\T^* K,J_2)$ attached to the prequantized Hamiltonian $K_1\times K_2$-manifold $\T^*K$. 
The product $\clif=\clif_1\odot\clif_2$ corresponds to the symbol $\Thom(N,J)\otimes L$ on $N=M\times \T^*K$.  

Let $\kappa_\rho$ be the Kirwan vector field associated to the map $\|\Phi\|^2_\rho: M\times \T^*K\to \R$.
We check that  $\|\Phi\|^2_\rho(m,k,\xi)=\|\Phi_M(m)-\xi\|^2 +\rho\|\xi\|^2$, and 
$$
\kappa_\rho(m,k,\xi)=\Big( \underbrace{(\Phi_M(m)-\xi)\cdot m}_{\kappa_I} \,;\, 
\underbrace{\wtde{\Phi}_M(m)-(1+\rho)\wtde{\xi}}_{\kappa_{I\!\!I,\rho}} \,;\, 
\underbrace{-[\wtde{\Phi}_M(m),\wtde{\xi}]}_{\kappa_{I\!\!I\!\!I}} \Big).
$$
Here $\T_{(m,k,\xi)}(M\times\T^*K)\simeq\T_{m} M\times \kgot\times \kgot$. We have 
\begin{eqnarray*}
\Cr(\|\Phi\|^2_\rho)&=&\{\kappa_\rho=0\}\\
&=&\bigcup_{\beta\in\Bcal} K_1\times K_2\cdot
\left[M^{\wtde{\beta}}\cap\Phi_M^{-1}(\beta)\times\{1\}\times\{\frac{\beta}{\rho+1}\}\right]
\end{eqnarray*}
where $\Bcal$ parametrizes $\Cr(\|\Phi_M\|^2)$. Hence one checks that the critical values of $\|\Phi\|^2_\rho$ are 
$\frac{\rho}{\rho+1}\|\beta\|^2,\beta\in\Bcal$.

Let $\clif^{\kappa_\rho}$ be the symbol $\clif$ pushed by the vector field $\kappa_\rho$: we have 
$$
\clif^{\kappa_\rho}(v; X ; Y)=\clif_1(v -\kappa_{I})\odot\clif_2(X-\kappa_{I\!\! I,\rho}\,;\,Y-\kappa_{I\!\!I\!\!I})
$$
for $(v; X; Y)\in \T_{(m,k,\xi)}(M\times\T^*K)\simeq\T_{m} M\times \kgot\times \kgot$.

For a real $R>0$ we define the open invariant subsets 
of $M\times\T^* K$
\begin{eqnarray*}
U_{R}&:=&\{\|\Phi\|^2_\rho< R\}\\
V_{R}&:=&\{\|\Phi_M\|^2< R\}\times \T^* K.
\end{eqnarray*}

By definition the generalized index $\Qcal^{\Phi,\rho}_{K_1\times K_2}(M\times \T^*K)$ is defined 
as the limit of the equivariant index 
$$
\Qcal^{\Phi,\rho}_{K_1\times K_2}(U_R):=\indice^{K_1\times K_2}_{\Ucal_R}(\clif^{\kappa_\rho}\vert_{U_R}), 
$$
when $R$ goes to infinity (and stays outside the critical values of $\|\Phi\|^2_\rho$).

In the other hand, when $R'$ is a regular value of $\|\Phi_M\|^2$, we see that the 
symbol $\clif_\rho\vert_{V_{R'}}$ is $K_1\times K_2$-transversally elliptic. Let 
\begin{equation}\label{eq:indice-V-R-prime}
\indice^{K_1\times K_2}_{V_{R'}}(\clif^{\kappa_\rho}\vert_{V_{R'}}) 
\end{equation}
be its equivariant index. Notice that the index map is well-defined on $V_{R}=\{\|\Phi_M\|^2< R\}\times \T^* K$ 
since $\T^*K$ can be seen as a open subset of a compact manifold.

It is easy to check that for any $R>0$ there exists $R'> R$ such that 
$U_R\subset V_R'$. It implies that $\Qcal^{\Phi,\rho}_{K_1\times K_2}(M\times \T^*K)$ 
is also defined as the limit of (\ref{eq:indice-V-R-prime}) when $R'$ goes to infinity.

We look now to the deformation $\kappa_\rho(s)=(\kappa_I^s; \kappa^s_{I\!\!I,\rho}; s\kappa_{I\!\!I\!\!I})$, $s\in[0,1]$ where
$$
\kappa_I^s(m,\xi)=(\Phi_M(m)-s\xi)\cdot m\quad {\rm and}\quad 
\kappa^s_{I\!\!I,\rho}(m,\xi)=s\wtde{\Phi}_M(m)-(1+s\rho)\wtde{\xi}.
$$

Let $\clif^{\kappa_\rho(s)}$ be the symbol $\clif$ pushed by the vector field $\kappa_\rho(s)$.

\begin{lem}Let $R'$ be a regular value of $\|\Phi_M\|^2$. 

$\bullet$ The familly $\clif^{\kappa_\rho(s)}\vert_{V_{R'}},\ s\in[0,1]$ 
defines an homotopy of $K_1\times K_2$-transversally elliptic symbols on $V_{R'}$.

$\bullet$ The $K_1$-invariant part of $\indice^{K_1\times K_2}_{\Vcal_{R'}}(\clif^{\kappa_\rho(0)}\vert_{V_{R'}})$
is equal to $\Qcal^{\Phi}_{K_2}(M_{< R'})$.
\end{lem}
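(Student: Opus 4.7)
My plan is to address parts (1) and (2) separately, with the homotopy being the real geometric content and the $s=0$ computation being essentially algebraic.

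For part (1), I would analyze directly the set $\Char(\clif^{\kappa_\rho(s)})\cap \T_{K_1\times K_2}V_{R'}$. This consists of points $((m,a,\xi);(v;X;Y))$ with $v=\kappa_I^s$, $X=\kappa^s_{I\!\!I,\rho}$, $Y=s\kappa_{I\!\!I\!\!I}$ and $(v;X;Y)$ orthogonal to the infinitesimal $K_1\times K_2$-action at $(m,a,\xi)$. I would write down the orthogonality relations explicitly, using that $K_2$ acts on $\T^*K$ by left translations and $K_1$ acts both on $M$ and on $\T^*K$ by right translations. The equation $X=s\wtde{\Phi}_M(m)-(1+s\rho)\wtde{\xi}$ combined with orthogonality to the $K_2$-direction should confine $\xi$ to a bounded region of $\kgot^*$ (here one uses crucially that $\|\Phi_M\|^2<R'$ on $V_{R'}$), and the remaining relations then pin down $v$ and $Y$ in a compact set uniformly for $s\in[0,1]$. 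Homotopy invariance of the equivariant index then applies.

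For part (2), I would exploit the decoupling at $s=0$: the vector field $\kappa_\rho(0)=(\Phi_M(m)\cdot m\,;\,-\wtde{\xi}\,;\,0)$ splits into a piece $\kappa_M$ depending only on $m$ and a piece $(-\wtde{\xi};0)$ depending only on $(a,\xi)$, the latter being precisely the Kirwan vector field for the proper $K_1$-moment map $\Phi_r=-\xi$ on $\T^*K$. Hence
\begin{equation*}
\clif^{\kappa_\rho(0)}\vert_{V_{R'}}\;=\;\clif_1^{\kappa_M}\vert_{M_{<R'}}\,\odot\,\sigma_{\T^*K},
\end{equation*}
where $\sigma_{\T^*K}$ is $\Thom(\T^*K,J_2)$ pushed by $(-\wtde{\xi};0)$. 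By the multiplicative property (Theorem \ref{theo:multiplicative-property}) the index factorises, and the computation of the index on $\T^*K$ carried out in Section \ref{sec:TK} identifies $\indice^{K_1\times K_2}_{\T^*K}(\sigma_{\T^*K})$ with the Peter--Weyl decomposition $\sum_{\mu\in\what{K}}(V_\mu^{K_1})^*\otimes V_\mu^{K_2}$ of $\mathrm{L}^2(K)$. Since the first factor equals $\Qcal^{\Phi}_{K_1}(M_{<R'})$ by definition, taking $K_1$-invariants converts each $V_\mu^{K_1}\otimes (V_\mu^{K_1})^*$ into $\C$ and produces
\begin{equation*}
\sum_{\mu}\Qcal(M_\mu)\,V_\mu^{K_2}\;=\;\Qcal^{\Phi}_{K_2}(M_{<R'}),
\end{equation*}
where the $K_2$-labelling on the right comes from the natural identification $K_1\cong K\cong K_2$.

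The main obstacle will be part (1): the ambient space $V_{R'}$ is not compact in the $\T^*K$-direction, so transversal ellipticity requires showing by hand that the characteristic set is confined to a compact subset of $\T_{K_1\times K_2}V_{R'}$, and that this compactness is uniform in $s\in[0,1]$. Once this uniform bound is established, the rest is essentially bookkeeping: homotopy invariance reduces everything to $s=0$, and the external-product structure together with multiplicativity and Peter--Weyl finishes part (2).
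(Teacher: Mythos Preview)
Your approach is essentially the paper's own: for part (1) the paper computes the characteristic set explicitly as the set of $(m,k,\tfrac{s}{1+s\rho}\Phi_M(m))$ with $m\in\Cr(\|\Phi_M\|^2)\cap M_{<R'}$, which confirms your uniform compactness claim; for part (2) it uses exactly your product decomposition, the multiplicative property, and the $\T^*K$ index computation from Section~\ref{sec:TK-phi}. One slip to correct: in your final display the multiplicities should be those of $V_\mu^{K_1}$ in $\Qcal^{\Phi}_{K_1}(M_{<R'})$, not $\Qcal(M_\mu)$---writing the latter presupposes Theorem~\ref{theo:intro}, which is precisely what this lemma is a step toward proving; the correct statement is simply that taking $K_1$-invariants against ${\rm L}^2(K)$ transports the $K_1$-character $\Qcal^{\Phi}_{K_1}(M_{<R'})$ to the identical character viewed over $K_2$.
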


\begin{proof} The first point follows from the fact that 
$\Char(\clif^{\kappa_\rho(s)}\vert_{V_{R'}})\cap \T_{K_1\times K_2}(V_{R'})$, which is equal to 
$$
\left\{
(m,k,\frac{s}{1+s\rho}\Phi_M(m)), \ k\in K\ {\rm and}\  m\in \Cr(\|\Phi_M\|^2)\cap\{\|\Phi_M\|^2< R'\}
\right\},
$$
stays in a compact set when $s\in[0,1]$. 

The symbol $\clif^{\kappa_\rho(0)}\vert_{V_{R'}}$ is equal to the product of 
the symbol $\clif_1^\kappa\vert_{M<R'}$, which is $K_1$-transversally elliptic, with the 
symbol
$$
\clif_2^{\kappa}(X;Y)=\clif_2(X+\xi; Y)
$$
which is a $K_2$-transversally elliptic on $\T^*K$. A basic computation 
done in section \ref{sec:TK-phi} gives that 
\begin{eqnarray*}
\indice^{K_1\times K_2}_{\T^*K}(\clif_2^{\kappa})&=&{\rm L}^2(K)\\
&=&\sum_{\mu\in\what{K}} (V_\mu^{K_1})^*\otimes V_\mu^{K_2}
\end{eqnarray*}
in $\Rfor(K_1\times K_2)$. Finally the``multiplicative property'' (see Theorem \ref{theo:multiplicative-property}) gives 
\begin{eqnarray*}
\indice^{K_1\times K_2}_{V_{R'}}(\clif^{\kappa_\rho(0)}\vert_{V_{R'}})&=&\indice^{K_1}_{M<R'}(\clif^\kappa_1\vert_{M<R'})
\otimes\indice^{K_1\times K_2}_{\T^*K}(\clif_2{\kappa})\\
&=& \sum_{\mu\in\what{K}} \Qcal^{\Phi}_{K_1}(M_{< R'})\otimes (V_\mu^{K_1})^*\otimes V_\mu^{K_2}
\end{eqnarray*}
Taking the $K_1$-invariant completes the proof of the second point.
\end{proof}

\medskip

Finally we have proved that the generalized character 
$[\indice^{K_1\times K_2}_{V_{R'}}(\clif^{\kappa_\rho}\vert_{V_{R'}})]^{K_1}$ 
is equal to $\Qcal^{\Phi}_{K_2}(M_{< R'})$. Taking the limit $R'\to\infty$ 
gives
\begin{eqnarray*}
\left[\Qcal^{\Phi,\rho}_{K_1\times K_2}(M\times \T^*K)\right]^{K_1}
&=&\lim_{R'\to\infty}\left[\indice^{K_1\times K_2}_{V_{R'}}(\clif^{\kappa_\rho}\vert_{V_{R'}})\right]^{K_1}\\
&=&\lim_{R'\to\infty}\Qcal^{\Phi}_{K_2}(M_{< R'})=\Qcal^{\Phi}_{K_2}(M).
\end{eqnarray*}

\subsection{Proof of Proposition \ref{prop:etape2}}

We work here with the Hamiltonian action of $K_1\times K_2$ on $M\times \Xcal_P$. The action is 
$(k_1,k_2)\cdot(m,x)=(k\cdot m,k_2\cdot x\cdot k_1^{-1})$
and  the corresponding moment map is $\Phi=(\Phi_1,\Phi_2)$ with  $\Phi_1(m,x)=\Phi_M(m)+\Phi_r(x)$ and $\Phi_2(m,x)=\Phi_l(x)$.
Let $\|(\xi_1,\xi_2)\|^2_\rho=\|\xi_1\|^2+\rho\|\xi_2\|^2$ be the Euclidean norm  $\kgot^*_1\times\kgot^*_2$ attached to $\rho>0$.

Let us consider the quantization of $M\times \Xcal_P$ via the map $\|\Phi\|^2_\rho$: 
$$
\Qcal^{\Phi,\rho}_{K_1\times K_2}(M\times \Xcal_P)\in \Rfor(K_1\times K_2)
$$
The critical set $\Cr(\|\Phi\|^2_\rho)$ admits the decomposition 
\begin{equation}\label{eq:crit-Phi-rho}
\Cr(\|\Phi\|^2_\rho)=\bigcup_{\gamma\in\Bcal_\rho}K_1\times K_2\cdot 
\Ccal_{\gamma}
\end{equation}
where $(m,x)\in \Ccal_{\gamma}$ if and only if  $\gamma=(\gamma_1,\gamma_2)$ with 
\begin{equation}\label{eq:C-gamma}
\begin{cases}
   \Phi_M(m)+\Phi_r(x)=\gamma_1\\
   \Phi_l(x)=\gamma_2  \\
  \wtde{\gamma}_1\cdot m=0\\
   \wtde{\gamma}_1\cdot_r x +\rho\,\wtde{\gamma}_2\cdot_l x=0.
\end{cases}
\end{equation}
We have 
\begin{equation}\label{eq:indice-M-X-P}
\Qcal^{\Phi,\rho}_{K_1\times K_2}(M\times \Xcal_P)=\sum_{\gamma\in\Bcal_\rho}
\Qcal^{\gamma,\rho}_{K_1\times K_2}(M\times \Xcal_P)
\end{equation}
where the generalized character $\Qcal^{\gamma,\rho}_{K_1\times K_2}(M\times \Xcal_P)$ 
is computed as an index of a transversally elliptic symbol in a neighborhood of 
$$
K_1\times K_2\cdot 
\Ccal_{\gamma}\subset M\times  \Phi^{-1}_l(K_2\cdot\gamma_2).
$$

Thanks to Theorem \ref{prop:Q-beta-support}  we know that the support of the generalized character 
$\Qcal^{\gamma,\rho}_{K_1\times K_2}(M\times \Xcal_P)$ is contained in 
$\{ (a,b)\in \what{K_1}\times\what{K_2}\ \vert\ \|a\|^2+\rho\| b\|^2\geq 
\|\gamma\|_\rho^2\}$. Hence
$$
{\rm support} \left(\left[\Qcal^{\gamma,\rho}_{K_1\times K_2}(M\times \Xcal_P))\right]^{K_1}\right)
\subset \Big\{ b\in \what{K_2}\ \vert\ \rho\| b\|^2\geq \|\gamma\|_\rho^2\Big\}
$$
Let $r_P=\inf_{\xi\in\partial P} \|\xi\|$. We know then that
$$
\left[\Qcal^{\Phi,\rho}_{K_1\times K_2}(M\times \Xcal_P)\right]^{K_1}
= \sum_{\stackrel{\gamma\in\Bcal_\rho}{\|\gamma\|^2_\rho< \rho r_P^2}} 
\left[\Qcal^{\gamma,\rho}_{K_1\times K_2}(M\times \Xcal_P)\right]^{K_1} + O(r_P).
$$

Let $R_P<\rho r_P^2$ be a regular value  of $\|\Phi\|^2_\rho : M\times \Xcal_P\to \R$  such that 
for all $\gamma\in \Bcal_\rho $ we have $\|\gamma\|^2_\rho< \rho r^2_P\Longleftrightarrow 
\|\gamma\|^2_\rho< R_P$. Then 
\begin{equation}\label{eq:decomp-M-X}
 \left[\Qcal^{\Phi,\rho}_{K_1\times K_2}(M\times \Xcal_P)\right]^{K_1}
= \left[\Qcal^{\Phi,\rho}_{K_1\times K_2}((M\times \Xcal_P)_{<R_P})\right]^{K_1} + O(r_P).
\end{equation}

\medskip

For the generalized index $\Qcal^{\Phi,\rho}_{K_1\times K_2}(M\times \T^*K)$ we have also a decomposition
$$
\Qcal^{\Phi,\rho}_{K_1\times K_2}(M\times \T^*K)=\sum_{\gamma\in\Bcal'_\rho}
\Qcal^{\gamma,\rho}_{K_1\times K_2}(M\times \T^*K)
$$
where $\Bcal'_\rho$ parametrizes the critical set of 
$\|\Phi\|_\rho^2:M\times \T^*K\to \R$. Like before we get 
\begin{equation}\label{eq:decomp-M-TK}
\left[\Qcal^{\Phi,\rho}_{K_1\times K_2}(M\times \T^*K)\right]^{K_1}
= \left[\Qcal^{\Phi,\rho}_{K_1\times K_2}((M\times \T^*K)_{<R'_P})\right]^{K_1} + O(r_P).
\end{equation}
Here $R'_P<\rho r_P^2$ is a regular value  of $\|\Phi\|^2_\rho:M\times \T^*K\to \R$ such that 
for all $\gamma\in \Bcal'_\rho $ we have $\|\gamma\|^2_\rho< \rho r^2_P\Longleftrightarrow 
\|\gamma\|^2_\rho< R'_P$.

\begin{lem}
We have 
\begin{equation}\label{eq:Q-Phi-R-P}
\Qcal^{\Phi,\rho}_{K_1\times K_2}((M\times \Xcal_P)_{<R_P})=\Qcal^{\Phi,\rho}_{K_1\times K_2}((M\times \T^*K)_{< R'_P}) .
\end{equation}
\end{lem}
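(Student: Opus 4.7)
The plan is to exploit the compatibility between $\Xcal_P$ and $\T^*K$ provided by properties (4), (5), (7) of Section \ref{subsec:wonderfull}, together with the invariance result of Proposition \ref{prop:psi-invariance}. The key geometric observation is that the open subset $(M\times\Xcal_P)_{<R_P}$ sits entirely inside $M\times\Xcal_P^\circ$. Indeed, if $\|\Phi\|_\rho^2(m,x)<R_P$, then
$$
\rho\,\|\Phi_l(x)\|^2 \;\leq\; \|\Phi\|_\rho^2(m,x)\;<\;R_P\;<\;\rho\,r_P^2,
$$
so $\|\Phi_l(x)\|<r_P$. By the very definition of $r_P=\inf_{\xi\in\partial P}\|\xi\|$, the open ball of radius $r_P$ is contained in $P^\circ$, hence $\Phi_l(x)\in K\cdot P^\circ=\Ucal_P$, i.e.\ $x\in\Xcal_P^\circ=\Phi_l^{-1}(\Ucal_P)$. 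The same argument applied to $M\times\T^*K$ shows that $(M\times\T^*K)_{<R_P}$ is contained in $M\times(K\times\Ucal_P)$.

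Next I would pull back via the equivariant diffeomorphism $\Upsilon:K\times\Ucal_P\to \Xcal_P^\circ$ from property (4). By that property, $\Upsilon^*(\Phi_l)(k,\xi)=k\cdot\xi$ and $\Upsilon^*(\Phi_r)(k,\xi)=-\xi$, which match exactly the formulas for the $K_2$- and $K_1$-moment maps on the open subset $K\times\Ucal_P\subset\T^*K$. Property (7) gives $\Upsilon^*(L_P)$ trivial, matching the trivial prequantization of $\T^*K$, and property (5) provides a homotopy of symplectic forms between $\Upsilon^*(\Omega_{\Xcal_P})$ and the cotangent symplectic form. Taking the product with the identity on $M$ yields an equivariant diffeomorphism $\mathrm{id}_M\times\Upsilon$ between $M\times(K\times\Ucal_P)$ and $M\times\Xcal_P^\circ$ that satisfies all three hypotheses of the second point of Proposition \ref{prop:psi-invariance} for both moment maps $\Phi_1,\Phi_2$ simultaneously. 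Moreover, since $\|k\cdot\xi\|=\|\xi\|$, the pullback of $\|\Phi\|_\rho^2$ from $M\times\Xcal_P^\circ$ coincides with $\|\Phi\|_\rho^2$ on $M\times(K\times\Ucal_P)$, so we may take $R'_P=R_P$ and the two open sets $(M\times\Xcal_P)_{<R_P}$ and $(M\times\T^*K)_{<R_P}$ correspond to each other under $\mathrm{id}_M\times\Upsilon$.

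It then remains to invoke (a straightforward multi-moment-map extension of) Proposition \ref{prop:psi-invariance}: since the Kirwan vector field of $\|\Phi\|_\rho^2$ is determined by the moment maps and the invariant metric, and since the pushed symbols are built from the Dolbeault–Thom symbol and the line bundle (both transported by the isomorphism up to homotopy of compatible almost complex structures), the equivariant indices of the pushed symbols on the two corresponding open subsets agree. This yields the equality
$$
\Qcal^{\Phi,\rho}_{K_1\times K_2}\!\bigl((M\times\Xcal_P)_{<R_P}\bigr)\;=\;\Qcal^{\Phi,\rho}_{K_1\times K_2}\!\bigl((M\times\T^*K)_{<R_P}\bigr).
$$
The one point that requires care — and which I expect to be the only real obstacle — is checking that Proposition \ref{prop:psi-invariance}, as stated for a single moment map and the norm $\|\Phi\|^2$, applies verbatim with $\|\Phi\|_\rho^2$ on a product action; this reduces to verifying that the homotopy argument on compatible almost complex structures survives replacement of $\Omega$ by $\Omega_M\oplus\Omega_{\Xcal_P}$ and of $\kappa$ by $\kappa_\rho$, which is a routine adaptation of the original proof.
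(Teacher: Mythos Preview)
Your proposal is correct and follows essentially the same route as the paper: both use the diffeomorphism $\mathrm{Id}_M\times\Upsilon$ between $M\times(K\times\Ucal_P)$ and $M\times\Xcal_P^\circ$, verify via the inequality $\rho\|\Phi_l(x)\|^2<R_P<\rho r_P^2$ that the relevant sublevel sets sit inside these open pieces, take $R'_P=R_P$, and then invoke Proposition~\ref{prop:psi-invariance}. Your remark that this proposition must be read with $\|\Phi\|_\rho^2$ replacing $\|\Phi\|^2$ is the only point the paper leaves implicit.
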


\begin{proof} The Lemma will follow from Proposition \ref{prop:psi-invariance}. We take here $V'=M\times \Xcal_P^o$, 
$V=M\times K\times \Ucal_P\subset M\times \T^* K$ and the equivariant diffeomorphism $\Psi: V\to V'$ is equal to 
${\rm Id}\times \Upsilon$ where $\Upsilon$ was introduced in Section \ref{subsec:wonderfull}. Note that $\Psi$ satisfies points  
$(1)-(3)$ of Proposition \ref{prop:psi-invariance}.

Note that $\|\Phi(m,x)\|_\rho^2<\rho r_P^2$ implies that $\|\Phi_l(x)\|<r_P$ and then $x\in \Xcal_P^o$. Hence the open 
subset $U':=(M\times \Xcal_P)_{<R_P}$ is contained in $V'=M\times \Xcal_P^o$. In the same way the open subset 
$U:=(M\times \T^* K)_{<R'_P}$ is contained in $V$. We have $\Psi(U)=U'$ if $R_P=R'_P$. 

We have proved that (\ref{eq:Q-Phi-R-P}) is a consequence of Proposition \ref{prop:psi-invariance}.
\end{proof}

Finally, if we take the difference between (\ref{eq:decomp-M-X}) and (\ref{eq:decomp-M-TK}), we get
$$
\left[\Qcal^{\Phi,\rho}_{K_1\times K_2}(M\times \Xcal_P)\right]^{K_1}-
\left[\Qcal^{\Phi,\rho}_{K_1\times K_2}(M\times \T^*K)\right]^{K_1}
=O(r_P).
$$
which is the relation of Proposition \ref{prop:etape2}.

\subsection{Proof of Proposition \ref{prop:etape3}}

Here we want to compare the $K_1$-invariant part of the characters  
$\Qcal^{\Phi,\rho}_{K_1\times K_2}(M\times \Xcal_P)$ and $\Qcal^{\Phi_1}_{K_1\times K_2}(M\times \Xcal_P)$.

We know after Theorem \ref{theo:Q-sing} that 
\begin{eqnarray*}
\Qcal_{K_2}(M_P)&=& \left[\Qcal^{\Phi_1}_{K_1\times K_2}(M\times \Xcal_P)\right]^{K_1}\\
&=& \left[\Qcal^{\Phi_1}_{K_1\times K_2}(U_\epsilon)\right]^{K_1}
\end{eqnarray*}
when $\epsilon>0$ is any regular value of $\|\Phi_1\|^2$, and 
$U_\epsilon:=\{\|\Phi_1\|^2<\epsilon\}\subset M\times \Xcal_P$. 

In this section we fix once for all $\epsilon >0$ small enough so 
that 
\begin{equation}\label{eq:epsilon-c}
\Cr(\|\Phi_1\|^2)\cap\{\|\Phi_1\|^2\leq \epsilon\} =\{\Phi_1=0\}.
\end{equation}  

Let $\clif_1$ be the symbol $\Thom(M,J_1)\otimes L$ attached to the prequantized Hamiltonian $K_1$-manifold $(M,\Omega)$. 
Let $\clif_3$ be the symbol $\Thom(\Xcal_P,J_3)\otimes L_P$ attached to the prequantized Hamiltonian $K_1\times K_2$-manifold 
$\Xcal_P$. The product $\clif=\clif_1\odot\clif_3$ corresponds to the symbol $\Thom(N,J)\otimes L$ on $N=M\times \Xcal_P$.

Let $\kappa_0$ and $\kappa_\rho$ be the Kirwan vector fields associated to the functions $\|\Phi_1\|^2$ and 
$\|\Phi\|^2_\rho$ on $M\times \Xcal_P$:
$$
\kappa_0(m,x)=\Big( \underbrace{\Phi_1(m,x)\cdot m}_{\kappa_I} \,;\, 
\underbrace{\Phi_1(m,x)\cdot_r x}_{\kappa_{I\!\!I}} \Big),\quad \kappa_\rho(m,x)=\kappa^0(m,x)+
\rho\, (0, \underbrace{\Phi_l(x)\cdot_l x}_{\kappa_{I\!\!I\!\!I}}).
$$

Let $\clif^{\kappa_\rho}$ be the symbol $\clif$ pushed by the vector field $\kappa_\rho$: we have 
$$
\clif^{\kappa_\rho}(v; \eta)= \clif_1(v -\kappa_{I})\odot\clif_3(\eta-\kappa_{I\!\! I}-\rho\kappa_{I\!\!I\!\!I})
$$
for $(v; \eta)\in \T_{(m,x)}(M\times\Xcal_P)$.

The character $\Qcal^{\Phi_1}_{K_1\times K_2}(U_\epsilon)$ is given by the index of 
the $K_1$-transversally elliptic symbol $\clif^{\kappa_0}\vert_{U_\epsilon}$.  The character 
$\Qcal^{\Phi,\rho}_{K_1\times K_2}(M\times \Xcal_P)$ is given by the index of 
the $K_1\times K_2$-transversally elliptic symbol $\clif^{\kappa_\rho}$.


\begin{lem}\label{lem:crit-phi-rho}
$\bullet$ There exists $\rho(\epsilon)>0$  such that
$$
\Cr(\|\Phi\|^2_\rho)\bigcap \left\{\|\Phi_1\|^2\leq \epsilon\right\}\subset \left\{\|\Phi_1\|^2\leq \frac{\epsilon}{2}\right\}
$$
for any $0\leq \rho\leq \rho(\epsilon)$.
\end{lem}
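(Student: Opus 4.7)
The plan is to argue by a contradiction/compactness argument exploiting the fact that $\kappa_\rho$ is a small perturbation of $\kappa_0$ on the relevant region, together with the properness hypothesis on $\Phi_M$.

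First I would establish that the set $\Omega_\epsilon := \{\|\Phi_1\|^2 \leq \epsilon\}$ has compact closure in $M \times \Xcal_P$. Since $\Xcal_P$ is compact the continuous map $\Phi_r$ is bounded on it by some constant $C_P$, and the identity $\Phi_1(m,x)=\Phi_M(m)+\Phi_r(x)$ forces $\|\Phi_M(m)\|\leq \sqrt{\epsilon}+C_P$ whenever $(m,x)\in\Omega_\epsilon$. The properness of $\Phi_M$ then confines $m$ to a compact subset of $M$, so $\Omega_\epsilon$ is relatively compact.

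Next I would set up the contradiction. Suppose the conclusion fails: then there exist sequences $\rho_n \to 0^+$ and $(m_n,x_n)\in\Omega_\epsilon$ with $\kappa_{\rho_n}(m_n,x_n)=0$ and $\|\Phi_1(m_n,x_n)\|^2>\epsilon/2$. By the compactness above, after extraction we may assume $(m_n,x_n)\to(m_\infty,x_\infty)$, and by continuity $\epsilon/2 \leq \|\Phi_1(m_\infty,x_\infty)\|^2\leq \epsilon$. The key observation is that the perturbing term $\rho_n\,(0,\kappa_{I\!\!I\!\!I}(m_n,x_n))$ is controlled: since $\kappa_{I\!\!I\!\!I}(m,x)=\Phi_l(x)\cdot_l x$ is uniformly bounded on the compact manifold $\Xcal_P$, this term tends to $0$ as $n\to\infty$. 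Passing to the limit in $\kappa_{\rho_n}(m_n,x_n)=\kappa_0(m_n,x_n)+\rho_n(0,\kappa_{I\!\!I\!\!I}(m_n,x_n))=0$ yields $\kappa_0(m_\infty,x_\infty)=0$, so $(m_\infty,x_\infty)\in\Cr(\|\Phi_1\|^2)\cap\{\|\Phi_1\|^2\leq\epsilon\}$.

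Applying the standing hypothesis (\ref{eq:epsilon-c}) this forces $\Phi_1(m_\infty,x_\infty)=0$, contradicting $\|\Phi_1(m_\infty,x_\infty)\|^2\geq \epsilon/2>0$. Hence some threshold $\rho(\epsilon)>0$ exists with the desired property. The only nontrivial input is the compactness of $\Omega_\epsilon$, which is really the one place the properness of $\Phi_M$ is used; the rest is a soft continuity argument. I do not anticipate a genuine obstacle here — the statement is exactly the expected persistence of the critical set under a small, bounded perturbation of the generating function.
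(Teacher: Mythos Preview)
Your argument is correct and uses the same ingredients as the paper's proof: compactness of the region $\{\epsilon/2\leq\|\Phi_1\|^2\leq\epsilon\}$ (which you justify explicitly via properness of $\Phi_M$, while the paper leaves this implicit), the non-vanishing of $\kappa_0$ there via (\ref{eq:epsilon-c}), and the uniform boundedness of $\kappa_{I\!\!I\!\!I}$ on the compact $\Xcal_P$. The only cosmetic difference is that the paper packages this as a direct quantitative estimate---setting $a(\epsilon)=\inf\|\kappa_0\|$ over the annulus and $b=\sup_{\Xcal_P}\|\kappa_{I\!\!I\!\!I}\|$ to obtain the explicit threshold $\rho(\epsilon)=a(\epsilon)/b$---whereas you extract the same conclusion by sequential compactness and contradiction.
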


\begin{proof}With the help of Riemannian metrics on $M$ and $\Xcal_P$ we define 
\begin{eqnarray*}
a(\epsilon)&:=&\inf_{\epsilon/2\leq \|\Phi_1(m,x)\|\leq \epsilon} \|\kappa^0(m,x)\|\\
b&:=&\sup_{x\in \Xcal_P} \|\Phi_l(x)\cdot_l x\|.
\end{eqnarray*}
We have $a(\epsilon)>0$ thanks to (\ref{eq:epsilon-c}), and $b<\infty$ since $\Xcal_P$ is compact. It is 
now easy to check that $\{\kappa_\rho=0\}\cap\{\epsilon/2\leq\|\Phi_1\|^2\leq \epsilon\}=\emptyset$ if 
$0\leq \rho < \frac{a(\epsilon)}{b}$.

\end{proof}

The symbols $\clif^{\kappa_\rho}\vert_{U_\epsilon}$, $\rho\in[0,\rho(c)]$ are $K_1\times K_2$-transversally 
elliptic, and they define the same class in $\K_{K_1\times K_2}(\T_{K_1\times K_2}U_\epsilon)$. Hence 
$\Qcal_{K_2}(M_P)$ can be computed as the $K_1$-invariant part of 
$$
\Qcal^{\Phi,\rho}_{K_1\times K_2}(U_\epsilon):=\indice^{U_\epsilon}_{K_1\times K_2}(\clif^{\kappa_\rho}\vert_{U_\epsilon})\in \Rfor(K_1\times K_2) 
$$
for $\rho\in[0,\rho(\epsilon)]$. 

A component $K_1\times K_2\cdot \Ccal_{\gamma}$ of $\Cr(\|\Phi\|^2_\rho)$ is contained in $U_\epsilon$ if and only 
$\|\gamma_1\|< \epsilon$ : hence the decomposition (\ref{eq:indice-M-X-P}) for the character 
$\Qcal^{\Phi,\rho}_{K_1\times K_2}(M\times \Xcal_P)$ gives
$$
\Qcal^{\Phi,\rho}_{K_1\times K_2}(M\times \Xcal_P)=\Qcal^{\Phi,\rho}_{K_1\times K_2}(\Ucal_\epsilon)+
\sum_{\stackrel{\gamma\in\Bcal_\rho}{\|\gamma_1\|^2\geq \epsilon}}
\Qcal^{\gamma,\rho}_{K_1\times K_2}(M\times \Xcal_P).
$$
where
$$
\Qcal^{\Phi,\rho}_{K_1\times K_2}(\Ucal_\epsilon)=\sum_{\stackrel{\gamma\in\Bcal_\rho}{\|\gamma_1\|^2< \epsilon}}
\Qcal^{\gamma,\rho}_{K_1\times K_2}(M\times \Xcal_P).
$$

Taking the $K_1$-invariant gives
\begin{equation}\label{eq:indice-U-c}
[\Qcal^{\Phi,\rho}_{K_1\times K_2}(M\times \Xcal_P)]^{K_1}=\Qcal_{K_2}(M_P)+
\sum_{\stackrel{\gamma\in\Bcal_\rho}{\|\gamma_1\|^2\geq \epsilon}}
[\Qcal^{\gamma,\rho}_{K_1\times K_2}(M\times \Xcal_P)]^{K_1}.
\end{equation}
In general we know that the support of the generalized character \break
$[\Qcal^{\gamma,\rho}_{K_1\times K_2}(M\times \Xcal_P))]^{K_1}$ is included in 
$\{ b\in \what{K_2}\ \vert\ \rho\| b\|^2\geq \|\gamma_1\|^2+\rho\|\gamma_2\|^2\}$. 
When $\|\gamma_1\|^2\geq \epsilon$ we have then that the support of 
$[\Qcal^{\gamma,\rho}_{K_1\times K_2}(M\times \Xcal_P))]^{K_1}$ is contained in 
$\{ b\in \what{K_2}\ \vert\ \rho\| b\|^2\geq \epsilon\}$.

Finally (\ref{eq:indice-U-c}) imposes that 
$$
[\Qcal^{\Phi,\rho}_{K_1\times K_2}(M\times \Xcal_P)]^{K_1}=\Qcal_{K_2}(M_P)+O((\epsilon/\rho)^{1/2}).
$$
when $0<\rho\leq \rho(\epsilon)$, which is the precise content of Proposition \ref{prop:etape3}.

\section{Other properties of $\Qcal^\Phi$}\label{sec:prop-Q-Phi}

Let $(M,\omega,\Phi)$ be a proper Hamitlonian $K$-manifold which is prequantized by a line bundle $L$. 
The character $\Qcal^\Phi_K(M)$ is computed by means of a scalar product on $\kgot^*$. 
The fact that $\Qcal^\Phi_K(M)=\qfor_K(M)$ gives the following 

\begin{prop}\label{prop:norm-independant}
The character $\Qcal^\Phi_K(M)$  does not depend of the choice of a scalar product on $\kgot^*$
\end{prop}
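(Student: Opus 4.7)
The proof is essentially a one-line corollary of the main Theorem \ref{theo:intro}. The plan is as follows.

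First I would observe that the formal quantization $\qfor_K(M)$, as given by Definition \ref{def:formal-quant}, is manifestly independent of any choice of scalar product on $\kgot^*$: its coefficients are the integers $\Qcal(M_\mu)$, where $M_\mu = \Phi^{-1}(K\cdot \mu)/K$ is a symplectic reduction whose definition involves only the moment map $\Phi$, the symplectic form $\Omega$, and the prequantum line bundle $L$, together with the (canonical) dominant weights $\mu \in \what{K}$. No invariant inner product on $\kgot$ enters.

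Next I would invoke Theorem \ref{theo:intro}, proved in Section \ref{sec:preuve}, which gives the equality
\[
\Qcal^\Phi_K(M) = \qfor_K(M) \quad \text{in } \Rfor(K)
\]
for any proper Hamiltonian $K$-manifold. Since the right-hand side is independent of the scalar product on $\kgot^*$, so is the left-hand side. This completes the proof.

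There is essentially no obstacle, since all the work has been done in establishing Theorem \ref{theo:intro}. The only thing worth commenting on is that this independence is not at all apparent from the definition of $\Qcal^\Phi_K(M)$: the Kirwan vector field $\kappa_m = (\wtde{\Phi(m)})_M(m)$ from (\ref{eq-kappa}), the critical set $\Cr(\|\Phi\|^2)$ and its decomposition (\ref{eq:crit-Phi}), the open subsets $M_{<R} = \{\|\Phi\|^2 < R\}$, and hence the individual pieces $\Qcal^\beta_K(M)$ appearing in (\ref{eq:Q-Q-beta}) all depend on the choice of invariant inner product on $\kgot^*$. Thus, Proposition \ref{prop:norm-independant} should be viewed as a non-trivial consequence of the identification $\Qcal^\Phi = \qfor$, rather than as something verifiable directly from the $\Qcal^\Phi$-side.
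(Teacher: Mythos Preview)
Your proposal is correct and matches the paper's own argument exactly: the paper states the proposition immediately after the sentence ``The fact that $\Qcal^\Phi_K(M)=\qfor_K(M)$ gives the following,'' so it too derives the norm-independence as a direct corollary of Theorem \ref{theo:intro}. Your additional remarks on why this is not visible from the $\Qcal^\Phi$-side are accurate and welcome, but the core reasoning is identical.
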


In this section we work in the setting where $K=K_1\times K_2$.  Let $\Phi_1$ be the 
moment map relative to the $K_1$-action.

\subsection{$\Phi_1$ is proper}

In this subsection we suppose that 
the moment map $\Phi_1$ relative to the $K_1$-action is {\em proper}.  We fix an invariant Euclidean norm 
$\|\bullet\|^2$ on $\kgot$ in such a way  that $\kgot_1= \kgot_2^\perp$. 

Let us ``quantize'' $(M,\Omega)$ via the invariant proper function $\|\Phi_1\|^2$: let 
$$
\Qcal^{\Phi_1}_{K_1\times K_2}(M)\in\Rfor(K_1\times K_2)
$$
be the corresponding generalized character.

\begin{theo}\label{theo:Phi=Phi_1}
We have 
\begin{equation}\label{eq:Phi=Phi_1}
\Qcal^\Phi_{K_1\times K_2}(M)=\Qcal^{\Phi_1}_{K_1\times K_2}(M) \quad {\rm in}\quad \Rfor(K_1\times K_2).
\end{equation}
\end{theo}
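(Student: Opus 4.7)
\emph{Proof plan.}
The plan is to interpolate via the family of invariant Euclidean norms $\|\xi\|_\rho^2 := \|\xi_1\|^2 + \rho\|\xi_2\|^2$ on $\kgot^*$, $\rho>0$, and to compare the two characters along the exhaustion by the relatively compact invariant open sets $U_R := \{\|\Phi_1\|^2 < R\}\subset M$ as $R\to\infty$ through regular values of $\|\Phi_1\|^2$. By Proposition \ref{prop:norm-independant}, $\Qcal^{\Phi}_{K_1\times K_2}(M)$ does not depend on the scalar product on $\kgot^*$; write $\Qcal^{\Phi,\rho}_{K_1\times K_2}(M)$ for its value computed with $\|\cdot\|_\rho$. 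The associated Kirwan vector field
$$
\kappa_\rho(m) = \bigl(\widetilde{\Phi_1(m)}\bigr)_M(m) + \rho\,\bigl(\widetilde{\Phi_2(m)}\bigr)_M(m)
$$
interpolates between $\kappa_0$, which is the Kirwan vector field used in Definition \ref{def:quant-phi-1} to build $\Qcal^{\Phi_1}_{K_1\times K_2}(M)$, and $\kappa_1$, which gives $\Qcal^{\Phi}_{K_1\times K_2}(M)$.

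The key local step, in the spirit of Lemma \ref{lem:crit-phi-rho}, is a homotopy on each slice $U_R$. Since $\Phi_1$ is proper and $R$ is regular for $\|\Phi_1\|^2$, the closure $\overline{U_R}$ is compact, $|\kappa_0|$ is bounded below by a positive constant in a neighborhood of $\partial U_R$, and $|(\widetilde{\Phi_2})_M|$ is bounded on $\overline{U_R}$. Hence there exists $\rho_0(R)>0$ such that $\kappa_s$ does not vanish in a neighborhood of $\partial U_R$ for every $s\in[0,\rho_0(R)]$; in particular $\Cr(\|\Phi\|_s^2) \cap U_R$ is compact and $\clif^{\kappa_s}\vert_{U_R}$ is $K_1\times K_2$-transversally elliptic. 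The family $\{\clif^{\kappa_s}\vert_{U_R}\}_{s\in[0,\rho_0(R)]}$ is then a homotopy of transversally elliptic symbols, and yields the key identity
$$
\Qcal^{\Phi,\rho}_{K_1\times K_2}(U_R) \,:=\, \indice_{U_R}^{K_1\times K_2}\bigl(\clif^{\kappa_\rho}\vert_{U_R}\bigr) \,=\, \indice_{U_R}^{K_1\times K_2}\bigl(\clif^{\kappa_0}\vert_{U_R}\bigr) \,=:\, \Qcal^{\Phi_1}_{K_1\times K_2}(U_R)
$$
for every $\rho \in [0,\rho_0(R)]$.

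The second step controls the tails via the support estimates of Section \ref{sec:Qcal-Phi}. From Definition \ref{def:quant-phi-1} and Theorem \ref{prop:Q-beta-support-1}, the difference $\Qcal^{\Phi_1}_{K_1\times K_2}(M) - \Qcal^{\Phi_1}_{K_1\times K_2}(U_R)$ equals the sum of $\Qcal^{\beta_1}_{K_1\times K_2}(M)$ over $\beta_1$ with $\|\beta_1\|^2\geq R$, and is supported in weights $\mu=(\mu_1,\mu_2)$ with $\|\mu_1\|\geq\sqrt{R}$, hence is $O(\sqrt{R})$ in $\Rfor(K_1\times K_2)$. On the other side, decomposing $\Qcal^{\Phi,\rho}_{K_1\times K_2}(M) = \sum_\gamma \Qcal^{\gamma,\rho}_{K_1\times K_2}(M)$ along the critical components $Z_\gamma$ of $\|\Phi\|_\rho^2$, and noting that $m\in Z_\gamma$ forces $\|\Phi_1(m)\|=\|\gamma_1\|$, the components not contained in $U_R$ are precisely those with $\|\gamma_1\|^2\geq R$; by Theorem \ref{prop:Q-beta-support}, each such $\Qcal^{\gamma,\rho}_{K_1\times K_2}(M)$ is supported in weights $\mu$ with $\|\mu\|_\rho^2\geq\|\gamma\|_\rho^2\geq\|\gamma_1\|^2\geq R$, which for $\rho\leq 1$ yields $\|\mu\|^2\geq\|\mu\|_\rho^2\geq R$, so this tail is likewise $O(\sqrt{R})$.

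Combining, for any regular value $R$ of $\|\Phi_1\|^2$ and any $\rho\in\bigl(0,\min(1,\rho_0(R))\bigr]$,
$$
\Qcal^{\Phi}_{K_1\times K_2}(M) - \Qcal^{\Phi_1}_{K_1\times K_2}(M) \,=\, \Qcal^{\Phi,\rho}_{K_1\times K_2}(M) - \Qcal^{\Phi_1}_{K_1\times K_2}(M) \,=\, O(\sqrt{R})
$$
in $\Rfor(K_1\times K_2)$; letting $R\to\infty$ through regular values yields the claimed equality. The principal technical difficulty lies in the uniform homotopy/compactness argument of the second paragraph: it adapts Lemma \ref{lem:crit-phi-rho} to the non-compact ambient manifold, using the relative compactness of the slices $U_R$ (afforded by properness of $\Phi_1$) in place of compactness of the ambient manifold.
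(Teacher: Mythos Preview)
Your proposal is correct and follows essentially the same approach as the paper: interpolate via the family $\|\cdot\|_\rho$, use Proposition \ref{prop:norm-independant} to identify $\Qcal^{\Phi}$ with $\Qcal^{\Phi,\rho}$, establish the homotopy $\clif^{\kappa_\rho}\vert_{U_R}\sim\clif^{\kappa_0}\vert_{U_R}$ for small $\rho$ via a Lemma \ref{lem:crit-phi-rho}-type compactness argument on the slice $U_R=\{\|\Phi_1\|^2<R\}$, and control both tails by the support estimates of Theorems \ref{prop:Q-beta-support} and \ref{prop:Q-beta-support-1}. The paper keeps the two tail terms separate as $O(\sqrt{R})$ and $O_1(\sqrt{R})$ (the latter meaning supported outside $\{\|\xi_1\|<\sqrt{R}\}$), but as you correctly observe, $O_1(\sqrt{R})$ is already $O(\sqrt{R})$, so your single estimate suffices.
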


\medskip

\begin{proof} On $\kgot=\kgot_1\oplus \kgot_2$ we may consider the family of invariant  Euclidean norms:
$\|X_1\oplus X_2\|^2_\rho=\|X_1\|^2+\rho\|X_2\|^2$ for $X_j\in\kgot_j$. Let 
$$
\Qcal^{\Phi,\rho}_{K_1\times K_2}(M)\in\Rfor(K_1\times K_2)
$$
be the quantization of $M$ computed via the map $\|\Phi\|^2_\rho= \|\Phi_1\|^2+\rho \|\Phi_2\|^2$. By definition,
$\Qcal^{\Phi_1}_{K_1\times K_2}(M)$ is equal to $\Qcal^{\Phi,0}_{K_1\times K_2}(M)$, and we know after 
Proposition \ref{prop:norm-independant} that  $\Qcal^\Phi_{K_1\times K_2}(M)$ coincides with the generalized character 
$\Qcal^{\Phi,\rho}_{K_1\times K_2}(M)\in\Rfor(K)$ for any $\rho>0$. 

Let us prove that prove that $\Qcal^{\Phi,\rho}_{K_1\times K_2}(M)=\Qcal^{\Phi_1}_{K_1\times K_2}(M)$. 
We denote $O(r)\in\Rfor(K_1\times K_2)$ any generalized character supported outside the 
ball $$\{\xi\in\tgot^*_1\times\tgot^*_2\ \vert\ \|\xi_1\|^2+\|\xi_2\|^2< r^2\}.$$
And we denote $O_1(r)\in\Rfor(K_1\times K_2)$ any generalized character supported outside the 
$$\{\xi\in\tgot^*_1\times\tgot^*_2\ \vert\ \|\xi_1\|< r\}.$$

Let $R_1>0$ be a regular value of $\|\Phi_1\|^2$: the open subset $\{\|\Phi_1\|^2< R_1\}$ is denoted $M_{<R_1}$. 
We know that 
$$
\Qcal^{\Phi_1}_{K_1\times K_2}(M)= \Qcal^{\Phi_1}_{K_1\times K_2}(M_{<R_1}) +O_1(\sqrt{R_1}).
$$
Like in the Lemma \ref{lem:crit-phi-rho}, we know that 
\begin{equation}\label{eq:crit-Phi-rho-R1}
\Cr(\|\Phi\|_\rho^2)\cap \{\|\Phi_1\|^2=R_1\}=\emptyset.
\end{equation}
for $\rho\geq 0$ small enough. The identity (\ref{eq:crit-Phi-rho-R1}) first implies that 
\begin{eqnarray*}
\Qcal^{\Phi,\rho}_{K_1\times K_2}(M)&=&\sum_{\stackrel{\gamma\in\Bcal_\rho}{\|\gamma_1\|^2< R_1}}
\Qcal^{\gamma,\rho}_{K_1\times K_2}(M) +
\sum_{\stackrel{\gamma\in\Bcal_\rho}{\|\gamma_1\|^2> R_1}}
\Qcal^{\gamma,\rho}_{K_1\times K_2}(M)\\
&=&\Qcal^{\Phi,\rho}_{K_1\times K_2}(M_{< R_1})+ O(\sqrt{R_1}).
\end{eqnarray*}
In the second equality we have used that $\Qcal^{\gamma,\rho}_{K_1\times K_2}(M)=O(\sqrt{R_1})$ when 
$\|\gamma_1\|^2> R_1$ since the ball 
$\left\{(\xi_1,\xi_2)\in \tgot^*_1\times\tgot^*_2\ \vert\ \|\xi_1\|^2+\|\xi_2\|^2< R_1\right\}$ is contained 
in 
$$
\left\{(\xi_1,\xi_2)\in \tgot^*_1\times\tgot^*_2\ \vert\ \|(\xi_1,\xi_2)\|^2_\rho< \|(\gamma_1,\gamma_2)\|^2_\rho\right\}.
$$
The identity (\ref{eq:crit-Phi-rho-R1}) shows also that the symbol $\clif^{\kappa_\rho}\vert_{M<R_1}$ are homotopic 
for $\rho\geq 0$ small enough. Hence 
$$
\Qcal^{\Phi,\rho}_{K_1\times K_2}(M_{< R_1})=\Qcal^{\Phi_1}_{K_1\times K_2}(M_{< R_1})
$$ 

We get finally that $\Qcal^{\Phi,\rho}_{K_1\times K_2}(M)-\Qcal^{\Phi_1}_{K_1\times K_2}(M)= 
O(\sqrt{R_1})+ O_1(\sqrt{R_1})$ for any regular value $R_1$ of $\|\Phi_1\|^2$. We have proved 
that $\Qcal^{\Phi,\rho}_{K_1\times K_2}(M)- \Qcal^{\Phi_1}_{K_1\times K_2}(M)=0$. 
\end{proof}

\medskip

Let us explain how Theorem \ref{theo:Phi=Phi_1} contains the identity that we called 
``{\em quantization commutes with reduction in the singular setting}''  in \cite{pep-formal}. By 
definition the $K_1$-invariant part of the right hand side of (\ref{eq:Phi=Phi_1}) is equal to 
the geometric quantization of the (possibly singular) compact Hamiltonian $K_2$-manifold  
$$
M{/\!\!/}_{0}K_1:=\Phi_1^{-1}(0)/K_1.
$$
Using now the fact that  the left hand side of (\ref{eq:Phi=Phi_1}) is equal to $\qfor_{K_1\times K_2}(M)$, we 
see  that the multiplicity of $V_\mu^{K_2}$ in $\Qcal_{K_2}(M{/\!\!/}_{0}K_1)$ is equal to the 
geometric quantization of the (possibly singular) compact manifold  
$$
M\times \overline{K_2\cdot\mu}{/\!\!/}_{(0,\mu)}K_1\times K_2.
$$

\subsection{The symplectic reduction $M{/\!\!/}_{0}K_1$ is smooth}\label{subsec:smooth-K-1-reduction}

Let $(M,\Omega)$ be an Hamiltonian $K_1\times K_2$-manifold with a proper moment map $\Phi=(\Phi_1,\Phi_2)$.  
In this section we suppose that $0$ is a regular value of $\Phi_1$ and that $K_1$ acts freely on $\Phi_1^{-1}(0)$. 
We work then with the (smooth) Hamiltonian $K_2$-manifold 
$$
N:=\Phi_1^{-1}(0)/K_1. 
$$
We still denote  by $\Phi_2:N\to\kgot^*_2$ the moment map relative to the $K_2$-action: note that this map is proper.  
Hence we can quantize the $K_2$-action on $N$ via the map $\Phi_2$. Let 
$\Qcal^{\Phi_2}_{K_2}(N)\in\Rfor(K_2)$ be the corresponding character.

\begin{prop}
We have 
\begin{equation}\label{eq:Q-Phi-K2-invariant}
\left[ \Qcal^{\Phi}_{K_1\times K_2}(M) \right]^{K_1}=\Qcal^{\Phi_2}_{K_2}(N)\quad {\rm in}\quad\Rfor(K_2).
\end{equation}
\end{prop}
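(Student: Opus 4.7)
The plan is to deduce the identity from Theorem \ref{theo:intro} (now established) combined with reduction in stages, converting both sides into $\qfor$-objects.

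First, I would verify that $\Phi_2:N\to\kgot_2^*$ is proper: for compact $C\subset\kgot_2^*$, the preimage $\Phi_2^{-1}(C)\subset N$ lifts to $\Phi^{-1}(\{0\}\times C)\subset M$, which is compact by properness of $\Phi$. Since $0$ is a regular value of $\Phi_1$ and $K_1$ acts freely on $\Phi_1^{-1}(0)$, the line bundle $L$ descends to a prequantum line bundle $L_N:=(L\vert_{\Phi_1^{-1}(0)})/K_1$ on $N$, and $(N,\Phi_2,L_N)$ is a prequantized proper Hamiltonian $K_2$-manifold. Theorem \ref{theo:intro} then gives
$$
\Qcal^{\Phi_2}_{K_2}(N)=\qfor_{K_2}(N)=\sum_{\mu\in\what{K_2}}\Qcal(N_\mu)\,V_\mu^{K_2}.
$$

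Applying Theorem \ref{theo:intro} to $M$ yields $\Qcal^\Phi_{K_1\times K_2}(M)=\qfor_{K_1\times K_2}(M)=\sum_{(\mu_1,\mu_2)}\Qcal(M_{(\mu_1,\mu_2)})\,V_{\mu_1}^{K_1}\otimes V_{\mu_2}^{K_2}$. Taking the $K_1$-invariant part extracts the coefficients with $\mu_1=0$:
$$
\bigl[\Qcal^\Phi_{K_1\times K_2}(M)\bigr]^{K_1}=\sum_{\mu_2\in\what{K_2}}\Qcal(M_{(0,\mu_2)})\,V_{\mu_2}^{K_2}.
$$
So the proposition reduces to the reduction-in-stages identity $\Qcal(M_{(0,\mu_2)})=\Qcal(N_{\mu_2})$ for every $\mu_2\in\what{K_2}$. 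For this, the iterated reduction $\Phi^{-1}(\{0\}\times K_2\cdot\mu_2)/(K_1\times K_2)$ factors canonically through $N=\Phi_1^{-1}(0)/K_1$, and both the symplectic form and the prequantum line bundle descend compatibly, identifying $M_{(0,\mu_2)}$ and $N_{\mu_2}$ as prequantized symplectic orbifolds.

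The main obstacle is the case where $\mu_2$ is not a regular value of $\Phi_2\vert_N$. There, both integers are defined via the shift desingularization of Section \ref{subsec:symplectic-quotient} by a generic $a$ close to $\mu_2$, and one must verify that this shift commutes with reduction in stages. This reduces to the smooth case: for $a$ generic and close to $\mu_2$, the element $(0,a)$ is generic close to $(0,\mu_2)$ for the $K_1\times K_2$-action, the identification $M_{(0,a)}\simeq N_a$ holds by the smooth reduction in stages, and Theorem \ref{theo:Q-sing} ensures that the resulting integers are independent of the choice of $a$.
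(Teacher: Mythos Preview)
Your approach is correct and shares its first half with the paper's proof: both invoke Theorem \ref{theo:intro} to rewrite each side as a $\qfor$, reducing the statement to the coefficient-by-coefficient identity $\Qcal(M_{(0,\mu_2)})=\Qcal(N_{\mu_2})$ (in the paper's notation, $\Qcal(\Mcal_{\mu_2})=\Qcal(\Mcal'_{\mu_2})$). The paper even remarks that $\Mcal_{\mu_2}$ and $\Mcal'_{\mu_2}$ coincide as symplectic reduced spaces, which is exactly your reduction-in-stages observation. Where you diverge is in justifying that this coincidence forces equality of the quantization integers. You appeal to the shift desingularization of Theorem \ref{theo:Q-sing}: pass to a nearby generic parameter where both sides become honest orbifold indices of the \emph{same} orbifold. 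The paper instead works directly from Definition \ref{def:quant-sing}, comparing the transversally elliptic symbols $\clif^\kappa\vert_{M_{<\epsilon}}$ and $\clif_2^{\kappa_2}\vert_{N_{<\epsilon}}$ via the Thom isomorphism $j_!$ along the normal direction $\kgot_1^*$ of $\Phi_1^{-1}(0)$ and the free-quotient isomorphism $\pi_1^*$, invoking a lemma from \cite{pep-RR} to match the $\K$-theory classes. Your route is more conceptual and avoids that technical lemma, at the cost of the genericity check you flag; that check goes through because $K_1$ acts freely on $\Phi_1^{-1}(0)$, so stabilizers in $K_1\times K_2$ of points over $(0,a)$ project isomorphically to $K_2$-stabilizers in $N$. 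The paper's route sidesteps this verification but requires the $\K$-theoretic machinery.
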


\begin{proof} When $\Phi_1$ is proper, the manifold $N$ is compact. Then the right hand side of (\ref{eq:Q-Phi-K2-invariant}) is equal to 
$\Qcal_{K_2}(N)$, and we know from Theorem \ref{theo:Phi=Phi_1} that the left 
hand side of (\ref{eq:Q-Phi-K2-invariant}) is equal to $[ \Qcal^{\Phi_1}_{K_1\times K_2}(M) ]^{K_1}$. In this 
case (\ref{eq:Q-Phi-K2-invariant}) becomes $[ \Qcal^{\Phi_1}_{K_1\times K_2}(M) ]^{K_1}=\Qcal_{K_2}(M{/\!\!/}_{0}K_1)$
which is the content of Theorem \ref{theo:Q-R-lisse}.

Let us consider the general case where $\Phi_1$ is not proper. Thanks to Theorem \ref{theo:intro} one knows that the multiplicities of 
$V_\mu^{K_2}$ in $\left[ \Qcal^{\Phi}_{K_1\times K_2}(M) \right]^{K_1}$ and $\Qcal^{\Phi_2}_{K_2}(N)$ 
are respectively equal to the quantization of the (possibly 
singular) symplectic reductions 
$$
\Mcal_{\mu}:=M\times \overline{K_2\cdot\mu}{/\!\!/}_{(0,0)}K_1\times K_2.
$$
and
$$
\Mcal'_{\mu}:=N\times \overline{K_2\cdot\mu}{/\!\!/}_{0}K_2,\quad {\rm with}\quad N=M{/\!\!/}_{0}K_1.
$$

Note that $\Mcal_{\mu}$ and $\Mcal_{\mu}'$ coincide as symplectic reduced space. Let us prove that their geometric quantization
are identical also. The proof will be done for $\mu=0$: the other case follows from the shifting trick.

Let $\clif$ be the $K_1\times K_2$-equivariant symbol $\Thom (M,J)\otimes L_M$. Let $\kappa$ be the Kirwan vector field attached 
to the moment map $\Phi=(\Phi_1,\Phi_2)$. Let $\clif^\kappa$ be the symbol $\clif$ pushed by $\kappa$. Let us denote 
$M_{<\epsilon}$ the open subset $\{\|\Phi\|^2<\epsilon\}$.  For $\epsilon>0$  small enough, the symbol $\clif^\kappa\vert_{M_{<\epsilon}}$ 
is $K_1\times K_2$-transversally elliptic, and $\Qcal(\Mcal_{0})$ is the $K_1\times K_2$-invariant part of 
$\indice_{M_{<\epsilon}}^{K_1\times K_2}(\clif^\kappa\vert_{M_{<\epsilon}})$.

Let $\clif_2$ be the $K_2$-equivariant symbol $\Thom (N,J)\otimes L_N$. Let $\kappa_2$ be the Kirwan vector field attached 
to the moment map $\Phi_2$. Let $\clif_2^{\kappa_2}$ be the symbol $\clif_2$ pushed by $\kappa_2$. Let us denote 
$N_{<\epsilon}$ the open subset $\{\|\Phi_2\|^2<\epsilon\}$.  For $\epsilon>0$  small enough, the symbol 
$\clif_2^{\kappa_2}\vert_{N_{<\epsilon}}$ 
is $K_2$-transversally elliptic, and $\Qcal(\Mcal'_{0})$ is the $K_2$-invariant part of 
$\indice_{N_{<\epsilon}}^{K_2}(\clif_2^{\kappa_2}\vert_{N_{<\epsilon}})$.

Our proof follows from the comparison of the classes
$$
\left[\clif^\kappa\vert_{M_{<\epsilon}}\right]\in\K_{K_1\times K_2}(\T_{K_1\times K_2}M_{<\epsilon})
$$
and 
$$
\left[\clif_2^{\kappa_2}\vert_{N_{<\epsilon}}\right]\in\K_{K_2}(\T_{K_2} N_{<\epsilon})
$$
A neighborhood of the smooth submanifold $Z:=\Phi_1^{-1}(0)$ in $M$ is diffeomorphic to a neighborhood of the 
$0$-section of the bundle $Z\times \kgot_1^*\to Z$. Let $Z_{<\epsilon}=Z\cap M_{<\epsilon}$ so that
$N_{<\epsilon}=Z_{<\epsilon}/K_1$. Hence $\left[\clif^\kappa\vert_{M_{<\epsilon}}\right]$ can be seen naturally a class in the K-group 
$\K_{K_1\times K_2}(\T_{K_1\times K_2}(Z_{<\epsilon}\times\kgot_1^*))$.

Following Atiyah \cite{Atiyah74}[Theorem 4.3], the inclusion map
$j:Z_{<\epsilon}\croc Z_{<\epsilon}\times \kgot^{*}_1$ induces the Thom isomorphism
$$
j_{!}:\K_{K_1\times K_2}(\T_{K_1\times K_2}Z_{<\epsilon})\longrightarrow 
\K_{K_1\times K_2}(\T_{K_1\times K_2}(Z_{<\epsilon}\times \kgot_1^{*})),
$$ 
with the commutative diagram
\begin{equation}\label{j.point.bis}
\xymatrix@C=2cm{
 \K_{K_1\times K_2}(\T_{K_1\times K_2}Z_{<\epsilon})\ar[r]^{j_{!}} \ar[dr]_{\indice^{K_1\times K_2}_{Z_{<\epsilon}}} & 
 \K_{K_1\times K_2}(\T_{K_1\times K_2}( Z_{<\epsilon}\times \kgot_1^{*})) 
 \ar[d]^{\indice_{Z_{<\epsilon}\times \kgot_1^{*}}^{K_1\times K_2}}\\
     & R^{-\infty}(K_1\times K_2)}.
\end{equation} 

Let $\pi_1: Z_{<\epsilon}\to N_{<\epsilon}$ be the quotient relative to the free action of $K_1$. The corresponding 
isomorphism 
$$
\pi_1^*: \K_{K_2}(\T_{K_2}N_{<\epsilon}) \longrightarrow\K_{K_1\times K_2}(\T_{K_1\times K_2}Z_{<\epsilon})
$$
satisfies the following rule :
\begin{equation}\label{eq:indice-pi-1}
\left[\indice^{Z_{<\epsilon}}_{K_1\times K_2} ( \pi_1^*\theta)\right]^{K_1}=
\indice^{N_{<\epsilon}}_{K_2} (\theta)
\end{equation}
for any $\theta\in \K_{K_2}(\T_{K_2}N_{<\epsilon})$.

\begin{lem}[\cite{pep-RR}]\label{lem:induction-Z}
  We have 
  $$
  j_{!}\circ\pi_1^{*}\Big(\left[\clif_2^{\kappa_2}\vert_{N_{<\epsilon'}}\right]\Big)=\left[\clif^\kappa\vert_{M_{<\epsilon}}\right]
  $$
  in $\K_{K_1\times K_2}(\T_{K_1\times K_2}( Z_{<\epsilon}\times \kgot_1^{*}))$.
\end{lem} 

\begin{proof} This Lemma is proven in \cite{pep-RR}[Section 6.2] when the group $K_2$ is trivial. It is easy to check that 
the proof extends naturally to our setting.
\end{proof}

If one uses Lemma \ref{lem:induction-Z} together with  (\ref{j.point.bis}) and (\ref{eq:indice-pi-1}), we get that 
\begin{eqnarray*}
\Qcal(\Mcal_0)&=&\left[\indice_{Z_{<\epsilon}\times \kgot_1^{*}}^{K_1\times K_2}
(\clif^\kappa\vert_{M_{<\epsilon}})\right]^{K_1\times K_2}\\
&=&\left[\indice_{N_{<\epsilon}}^{K_2}(\clif_2^{\kappa_2}\vert_{N_{<\epsilon}})\right]^{K_2}= \Qcal(\Mcal'_0).
\end{eqnarray*}

\end{proof}

\bigskip

\section{Example: the cotangent bundle of an orbit}\label{sec:quant-K/H}

\subsection{The formal quantization of $\T^*K$.}\label{sec:TK}

Let $K$ be a compact connected Lie group equipped with the action of two copies of $K$: 
$(k_1,k_2)\cdot a = k_2 a k_1^{-1}$. Then we have a Hamiltonian action of $K_1\times K_2$ on the cotangent bundle 
$\T^*K$. In this section, we check that each formal geometric quantization of $\T^*K$,
$\qfor_{K_1\times K_2}(\T^*K)$ and $\Qcal^\Phi_{K_1\times K_2}(\T^*K)$, are both equal to 
the $K_1\times K_2$-module $L^2(K)$.

The tangent bundle $\T K$ is identified with $K\times\kgot$ through the right translations: to 
$(a,X)\in K\times\kgot$ we associate $\frac{d}{dt}a e^{tX}\vert_0 $. The action of 
$K_1\times K_2$ on the cotangent bundle $\T^*K\simeq K\times \kgot^*$ is then 
$$
(k_1,k_2)\cdot( a,\xi)= (k_2ak_1^{-1},k_1\cdot\xi).
$$

The symplectic form on $T^*K$ is $\Omega:=-d\lambda$, where $\lambda$ is the 
Liouville $1$-form. Let us compute these two form in coordinates. The tangent 
bundle of $\T^*K\simeq K\times \kgot^*$ is identified with $\T^*K\times \kgot\times\kgot^*$: 
for each $(a,\xi)\in \T^*K$, we have a two form $\Omega_{(a,\xi)}$  on $\kgot\times\kgot^*$. 
A direct computation gives 
$$\Omega_{(a,\xi)}(X,Y)=\langle \xi,[X,Y]\rangle,\quad \Omega_{(a,\xi)}(\eta,\eta')=0, \quad 
\Omega_{(a,\xi)}(X,\eta)=\langle \eta, X\rangle
$$
for $X,Y\in\kgot$ and $\eta,\eta'\in\kgot^*$. So $\Omega_{(a,\xi)}=\Omega_0 + \pi_\xi$
where $\Omega_0$ is the canonical (constant) symplectic form on $\kgot\times\kgot^*$ and  
$\pi_\xi$  is the closed two form on $\kgot$ defined by $\pi_\xi(X,Y)=\langle \xi,[X,Y]\rangle$. 

If we identify $\kgot\simeq\kgot^*$ through an invariant Euclidean norm, the symplectic structure 
on $\T_{(a,\xi)}(\T^*K)\simeq \kgot\times\kgot^*$
is given by a skew-symmetric matrix
$$A_\xi:=\left(\begin{array}{cc}
\ad(\xi) & I_n \\
-I_n& 0
\end{array}\right).
$$
We will work with the following compatible almost complex structure  on the tangent bundle of $\T^*K$  :
$J_\xi= -A_\xi (-A^2_\xi)^{-1/2}$.
When $\xi= 0$, the complex structure $J_0$ on $\kgot\times\kgot^*$ is defined by the matrix 
$$J_0:=\left(\begin{array}{cc}
0 &- I_n \\
I_n& 0
\end{array}\right).
$$
Hence the complex $K$-module $(\kgot\times\kgot^*, J_0)$ is naturally identified with the complexification 
$\kgot_\C$ of $\kgot$.

One checks easily that the moment map relative to the $K_1\times K_2$-action is the {\em proper} 
map $\Phi: \T^*K\to \kgot_1^*\times \kgot_2^*$ defined by $\Phi(a,\xi)=(-\xi,a\cdot\xi)$. 

Here the symplectic manifold $\T^*K$ is prequantized by the trivial line bundle.

\subsubsection{Computation of $\qfor_{K_1\times K_2}(\T^*K)$.}\label{sec:TK-qfor}

Let $\Ocal_1\times\Ocal_2$ be a coadjoint orbit of $K_1\times K_2$ in $\kgot_1^*\times \kgot_2^*$. 
One checks that 
\begin{equation}
\Phi^{-1}(\Ocal_1\times\Ocal_2)=
\begin{cases}
   \emptyset\quad {\rm if}\ \Ocal_1\neq -\Ocal_2\\
   {\rm a} \ K_1\times K_2-{\rm orbit} \quad {\rm if}\ \Ocal_1= -\Ocal_2.
\end{cases}
\end{equation}

We knows that the stabiliser subgroup $K_\xi$ of an element $\xi\in\kgot^*$ is connected. Then the stabilizer subgroup 
$(K_1\times K_2)_{(a,\xi)}=\{(k_1,ak_1 a^{-1}),\ k_1\in K_\xi\}$ is also connected. 

Let $(\T^*K)_{(\mu,\lambda)}$ be the symplectic reduction of $\T^*K$ at the level $(\mu,\lambda)\in\what{K}^2$. For any $\mu\in\what{K}$, 
we define $\mu^*\in\what{K}$ by the relation $-K\cdot\mu=K\cdot\mu^*$: note that $V_{\mu^*}^K\simeq (V_\mu^K)^*$. 
If one uses Theorem \ref{theo:Q-point}, one has 
\begin{equation}
\Qcal((\T^*K)_{(\mu,\lambda)})=
\begin{cases}
  0\quad {\rm if}\ \lambda\neq \mu^*\\
   1\quad {\rm if}\ \lambda= \mu^*.
\end{cases}
\end{equation}

Finally
\begin{eqnarray*}
\qfor_{K_1\times K_2}(\T^*K)
&=&\sum_{(\mu,\lambda)\in \what{K}\times\what{K}}
\Qcal\left((\T^*K)_{(\mu,\lambda)}\right) V_{\mu}^{K_1}\otimes V_{\lambda}^{K_2}\\
&=& \sum_{\mu\in \what{K}} V_{\mu}^{K_1}\otimes (V_{\mu}^{K_2})^*={\rm L}^2(K).
\end{eqnarray*}

\subsubsection{Computation of $\Qcal^\Phi_{K_1\times K_2}(\T^*K)$.}\label{sec:TK-phi}

The Kirwan vector field on $\T^*K$ is 
$$
\kappa(a,\xi)=-2\xi\in\kgot_\C.
$$ 
Let $\clif^\kappa$ be the symbol $\Thom(\T^*K,J)$ pushed by the vector field 
$\frac{1}{2}\kappa$. At each $(a,\xi)\in \T^*K$, the map $\clif^\kappa_{(a,\xi)}(X\oplus\eta)$ from 
$\wedge_{ J_\xi}^{even}(\kgot\times\kgot^*)$ to $\wedge_{ J_\xi}^{odd}(\kgot\times\kgot^*)$ is equal 
to the clifford map $\clif(X+\xi\oplus\eta)$. Note that $\clif^\kappa$ is a $K_2$-transversally elliptic symbol
on $\T^*K$: we have $\Char(\clif^\kappa)\cap \T_{K_2}(\T^*K)=\{(1,0)\}$. We will now compute the equivariant index 
of $\clif^\kappa$.

First we consider the homotopy $t\in [0,1]\to J_{t\xi}$ of symplectic structure on $\T^*K$. Let $\tilde\clif^\kappa$ be the 
symbol acting on $\wedge_{ J_0}^{\bullet}(\kgot\times\kgot^*)=\wedge_\C^{\bullet} \kgot_\C$. Proposition 
\ref{prop:psi-invariance} shows that  the symbols $\clif^\kappa$ and $\tilde\clif^\kappa$ define the same class in 
$K_{K_1\times K_2}(\T_{K_2}(\T^*K))$.

The projection $\pi:\T^*K\to\kgot^*$ corresponds to the quotient map relative to the free action of $K_2$. At the 
level of $K$-groups we get an isomorphism
$$
\pi_*:K_{K_1\times K_2}(\T_{K_2}(\T^*K))\to K_{K_1}(\T\kgot^*).
$$
Atiyah \cite{Atiyah74} proves that
$$
\indice^{\T^*K}_{K_1\times K_2}(\sigma)=\sum_{\mu\in\what{K}}
\indice_{K_1}^{\kgot^*}\left(\pi_*(\sigma\otimes V_{\mu}^{K_2})\right)
\otimes (V_{\mu}^{K_2})^*
$$
for any class $\sigma\in K_{K_1\times K_2}(\T_{K_2}(\T^*K))$. In our case the symbol 
$\pi^*(\tilde\clif^\kappa)$ is equal to the 
Bott symbol ${\rm Bott}(\kgot^*)$, and for any $K_2$-module $E_2$ we have 
$$
\pi_*(\tilde\clif^\kappa\otimes E_2)= {\rm Bott}(\kgot^*)\otimes E_1
$$
where $E_1$ is the module $E_2$ with the action of $K_1$. Then 
\begin{eqnarray*}
\Qcal^\Phi_{K_1\times K_2}(\T^*K)&=&\indice^{\T^*K}_{K_1\times K_2}(\tilde\clif^\kappa)\\
&=&\sum_{\mu\in\what{K}}\indice_{K_1}^{\kgot^*}\left({\rm Bott}(\kgot^*)\otimes V_{\mu}^{K_1}\right)\otimes (V_{\mu}^{K_2})^*\\
&=&\sum_{\mu\in\what{K}} V_{\mu}^{K_1}\otimes (V_{\mu}^{K_2})^*= L^2(K),
\end{eqnarray*}
since $\indice_{K_1}^{\kgot^*}({\rm Bott}(\kgot^*))=1$.

\subsection{The formal quantization of $\T^*(K/H)$.}\label{subsec:Q-K/H}

Let $H$ be a closed connected subgroup of $K$. Look at $T^*K$ as a Hamiltonian manifold relatively to the action of 
$H\times K \subset K_1\times K_2$. The moment map $\Phi=(\Phi_H,\Phi_K)$ is defined by :
$\Phi_H(a,\xi)= -{\rm pr}(\xi)$ and $\Phi_K(a,\xi)= a\cdot\xi$, where ${\rm pr}:\kgot^*\to\hgot^*$ is the projection. 
Note that $\Phi$ is a proper map.

The cotangent bundle $\T^*(K/H)$, viewed as $K$-manifold, is equal to the symplectic reduction of $T^*K$ relatively to the 
$H$-action: if the kernel of the projection ${\rm pr}$ is denoted $\hgot^\perp$, we have
$$
\Phi_H^{-1}(0)/H=K\times_H \hgot^\perp= \T^* (K/H).
$$

We are here in the setting of section \ref{subsec:smooth-K-1-reduction}. The reduction of the 
$H\times K$ proper Hamiltonian manifold $\T^*K$ relatively to the $H$-action is smooth, then its
 formal quantization is computed as follows
\begin{eqnarray}\label{eq:Q-Phi-K-K/H}
\Qcal_{K}^\Phi(\T^*(K/H))=\left[\Qcal_{H\times K}^\Phi(\T^*K)\right]^H
&=&\left[\Qcal_{K_1\times K_2}^\Phi(\T^*K)\vert_{H\times K}\right]^H \nonumber\\
&=& \left[{\rm L}^2(K)\right]^H\\
&=&{\rm L}^2(K/H).\nonumber
\end{eqnarray}
Here the fact that $\Qcal_{H\times K}^\Phi(\T^*K)$ is equal to the restriction of $\Qcal_{K_1\times K_2}^\Phi(\T^*K)=
{\rm L}^2(K)$ to 
$H\times K$ is a consequence of Theorem \ref{theo:pep-formal}.

Let us denoted $\Big[\T^*(K/H)\Big]_{\mu}$ the symplectic reduction at $\mu\in\what{K}$ of 
the K-Hamiltonian 
manifold $\T^*(K/H)$. Theorem \ref{theo:intro} together with (\ref{eq:Q-Phi-G-G/H}) gives
\begin{coro}
For any $\mu\in\what{K}$, we have 
$$
\Qcal\left(\Big[\T^*(K/H)\Big]_{\mu}\right)=\dim \Big[V_\mu^K\Big]^H,
$$
where $[V_\mu^K]^H$ is the subspace of $H$-invariant vector.
\end{coro}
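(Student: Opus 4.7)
The plan is to extract the corollary essentially as a coefficient comparison, since all the hard work has already been done in computing $\Qcal^\Phi_K(\T^*(K/H))$.

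First I would invoke Theorem \ref{theo:intro} applied to the proper Hamiltonian $K$-manifold $\T^*(K/H)$ to replace $\Qcal^\Phi_K(\T^*(K/H))$ by $\qfor_K(\T^*(K/H))$. Combined with the already-established identity (\ref{eq:Q-Phi-K-K/H}), this gives
$$
\qfor_K(\T^*(K/H)) = {\rm L}^2(K/H) \quad \text{in } \Rfor(K).
$$

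Next I would expand the left-hand side using Definition \ref{def:formal-quant}: by construction,
$$
\qfor_K(\T^*(K/H)) = \sum_{\mu\in\what{K}} \Qcal\!\left(\Big[\T^*(K/H)\Big]_{\mu}\right)\, V_\mu^K.
$$
For the right-hand side I would invoke the Peter--Weyl decomposition together with Frobenius reciprocity: the multiplicity of $V_\mu^K$ in ${\rm L}^2(K/H)$ equals $\dim \hom_H(\C, V_\mu^K|_H) = \dim[V_\mu^K]^H$ (using that $[V_\mu^K]^H$ and $([V_\mu^K]^*)^H$ have the same dimension for a compact connected $H$).

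Comparing the multiplicities of $V_\mu^K$ on both sides yields the claimed equality $\Qcal([\T^*(K/H)]_\mu) = \dim[V_\mu^K]^H$ for every $\mu\in\what{K}$. There is no real obstacle here: the only subtlety worth flagging is that both sides are \emph{a priori} elements of $\Rfor(K)$, so the comparison of $V_\mu^K$-multiplicities is legitimate precisely because the basis $\{V_\mu^K\}_{\mu\in\what{K}}$ is a $\Z$-basis of the admissible representation ring.
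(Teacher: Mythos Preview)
Your proposal is correct and follows essentially the same route as the paper: combine Theorem \ref{theo:intro} with the computation (\ref{eq:Q-Phi-K-K/H}) to obtain $\qfor_K(\T^*(K/H))={\rm L}^2(K/H)$, then read off the multiplicity of $V_\mu^K$ on each side. Your treatment is in fact more explicit than the paper's, which simply states the corollary as an immediate consequence; the Peter--Weyl/Frobenius step and the remark that $\dim[V_\mu^K]^H=\dim[(V_\mu^K)^*]^H$ (valid for any compact $H$, not only connected ones) are exactly what is needed to justify the coefficient comparison.
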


\subsection{The formal quantization of $\T^*(K/H)$ relatively to the action of $G$.}\label{subsec:Q-K/H-G}

Let $G$ be a closed connected subgroup of $K$. We look at the hamiltonian action of G on $\T^*(K/H)$. Let 
$\Phi_G: \T^*(K/H)\to \ggot^*$ be the moment map. We consider also the restriction of the K-module  
${\rm L}^2(K/H)$ to $G$.

We have 

\begin{prop} The following statements are equivalent
\begin{enumerate}
\item The moment map $\Phi_G: \T^*(K/H)\to \ggot^*$ is proper.
\item $\Phi_G^{-1}(0)$ is equal to the zero section.
\item $k\cdot\ggot+\hgot=\kgot$, for any $k\in K$.
\item $\ggot+\hgot=\kgot$
\item $G$ acts transitively on $K/H$.
\item $L^2(K/H)]^G\simeq \C$
\item ${\rm L}^2(K/H)\vert_G$ is an admissible $G$-representation.
\end{enumerate}
\end{prop}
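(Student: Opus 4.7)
My plan is to establish the seven-way equivalence via the cycle $(1)\Leftrightarrow(2)\Leftrightarrow(3)\Rightarrow(4)\Leftrightarrow(5)\Rightarrow(3)$, plus the two further equivalences $(5)\Leftrightarrow(6)$ and $(5)\Leftrightarrow(7)$. Using the identification $\T^*(K/H) = K \times_H \hgot^\perp$ and the explicit formula $\Phi_G([k,\xi]) = \mathrm{pr}_{\ggot^*}(\mathrm{Ad}^*(k)\xi)$, the equivalence $(2)\Leftrightarrow(3)$ is pure linear algebra: the conditions $\xi \in \hgot^\perp$ and $\Phi_G([k,\xi])=0$ translate to $\xi$ lying in the annihilator of $\hgot + \mathrm{Ad}(k^{-1})\ggot$, so $\Phi_G^{-1}(0)$ equals the zero section exactly when (3) holds.

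For $(1)\Leftrightarrow(2)$, I would argue as in the paper's treatment of quadratic moment maps on vector spaces, exploiting the fact that $\Phi_G$ is fiberwise linear with compact base $K/H$. If (2) fails at some $[k_0,\xi_0]$, then $\{[k_0, t\xi_0]\}_{t\in\R} \subset \Phi_G^{-1}(0)$ immediately witnesses non-properness. Conversely, given a sequence $[k_n,\xi_n]$ with $\Phi_G([k_n,\xi_n])$ bounded, if $\|\xi_n\| \to \infty$ then after extracting limits $k_n \to k$ and $\xi_n/\|\xi_n\| \to \eta \in \hgot^\perp\setminus\{0\}$, dividing through gives $\Phi_G([k,\eta])=0$, contradicting (2); so $\xi_n$ must stay bounded and $[k_n,\xi_n]$ admits a convergent subsequence.

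The chain $(3)\Rightarrow(4)\Leftrightarrow(5)\Rightarrow(3)$ is largely standard. $(3)\Rightarrow(4)$ is immediate at $k=e$; $(4)\Leftrightarrow(5)$ follows because $\ggot + \hgot = \kgot$ makes the multiplication map $G\times H \to K$ submersive at the identity so $GH$ is open, while compactness of $G$ and $H$ makes $GH$ closed, and connectedness of $K$ forces $GH=K$. For $(5)\Rightarrow(3)$, transitivity forces the infinitesimal action $\ggot \to T_{kH}(K/H)$ to be surjective at every $k$; a direct computation identifies this map with $X \mapsto \mathrm{Ad}(k^{-1})X \bmod \hgot$, which is precisely~(3).

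Finally, $(5)\Leftrightarrow(6)$ follows because invariant $L^2$ functions on $K/H$ descend to $L^2(G\backslash K/H)$ and continuous $G$-invariants separate orbits of a compact-group action, so $[L^2(K/H)]^G \simeq \C$ iff there is a single $G$-orbit. For $(5)\Leftrightarrow(7)$, the forward implication uses $K/H = G/(G\cap H)$ as $G$-spaces together with the Peter--Weyl decomposition ${\rm L}^2(K/H)|_G = \bigoplus_\lambda V_\lambda^G \otimes [(V_\lambda^G)^*]^{G\cap H}$, which is manifestly admissible. The main obstacle I anticipate is $(7)\Rightarrow(5)$: admissibility gives in particular $\dim L^2(K/H)^G < \infty$, so the orbit space $G\backslash K/H$ has finite-dimensional $L^2$, which for a compact-group action on a connected compact manifold forces the principal stratum to be zero-dimensional (otherwise a positive-dimensional family of principal orbits would contribute infinitely many $L^2$-classes); hence the principal orbit is open, and connectedness then makes $G$ act transitively. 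Making this last step fully rigorous will likely require invoking the slice theorem.
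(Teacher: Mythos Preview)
Your proposal is correct and, in fact, more elementary than the paper's own argument. The paper organises the equivalences quite differently: it deduces $(1)\Rightarrow(7)$ by invoking Theorem~\ref{theo:pep-formal} (the stability of $\qfor$ under restriction to subgroups), which is heavy machinery from a previous paper, and then proves $(7)\Rightarrow(6)$ by a representation-theoretic trick---writing $[{\rm L}^2(K/H)]^G=\sum_{\mu\in A_0}[V_\mu^K]^H\otimes[(V_\mu^K)^*]^G$ and observing that $\mu\in A_0$ forces $k\mu\in A_0$ for $k\gg 1$, so finiteness of $A_0$ gives $A_0=\{0\}$. The remaining steps $(6)\Leftrightarrow(5)\Leftrightarrow(4)\Leftrightarrow(3)\Rightarrow(2)\Leftrightarrow(1)$ in the paper match yours closely.

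Your route bypasses both of these ingredients. You never invoke the formal-quantization restriction theorem, and you replace the semigroup-of-weights argument by a purely topological one: admissibility forces $\dim C(K/H)^G\leq\dim L^2(K/H)^G<\infty$, hence $G\backslash K/H$ is a finite compact Hausdorff space, so $K/H$ is a finite disjoint union of closed $G$-orbits, and connectedness gives a single orbit. This is entirely rigorous and does not require the slice theorem you were worried about---finiteness of the orbit set plus connectedness is enough. The trade-off is that the paper's proof of $(1)\Rightarrow(7)$ illustrates how the proposition sits inside the formal-quantization framework, whereas your argument is self-contained but does not make that connection explicit.
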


\begin{proof}
$(1)\Longrightarrow (7)$ is a consequence of Theorem \ref{theo:pep-formal}. Let us prove that $(7)\Longrightarrow (6)$. Suppose now that 
$$
{\rm L}^2(K/H)\vert_G=\sum_{\mu\in\what{K}} [V_{\mu}^{K}]^H\otimes (V_{\mu}^{K})^*\vert_G
$$
is an admissible $G$-representation. It means that for any $\lambda\in\what{G}$ the set 
$$
A_\lambda:=\left\{
\mu\in\what{K}\ \vert\ [V_{\mu}^{K}]^H\neq \{0\}\ {\rm and}\  [(V_\lambda^G)^*\otimes (V_{\mu}^{K})^*\vert_G]^G\neq \{0\}
\right\}
$$
is finite. Then the vector space $L^2(K/H)]^G$ is equal to the finite dimensional vector space
$\sum_{\mu\in A_0} [V_{\mu}^{K}]^H\otimes [(V_{\mu}^{K})^*]^G$.  It is not difficult to check that if $\mu\in A_0$, 
then $k\mu\in A_0$ for $k>>1$. Finally the fact that $A_0$ is finite implies that $A_0$ is reduced to $\mu=0$. Hence 
the only $G$-invariant functions on $K/H$ are the scalars.

$(6)\Longleftrightarrow (5)\Longleftrightarrow (4)\Longleftrightarrow (3)$ is a general fact concerning smooth 
actions of a compact connected Lie group $G$ on a compact connected  manifold $M$. The manifold $M$ does not have 
$G$-invariant functions which are not scalar if and only if the action of $G$ on $M$ is transitive. And given a point 
$m\in M$, the orbit $G\cdot m$ is all of $M$ if and only if tangent spaces  $\T_m(G\cdot m)$ and $\T_m M$ are equal.
If we take $m=\overline{k^{-1}}$ in $M=K/H$, the condition $\T_m(G\cdot m)=\T_m M$ is equivalent to $k\cdot\ggot+\hgot=\kgot$.

Let us check $(3)\Longrightarrow (2)$. Let $[k,\xi]\in K\times_H \hgot^\perp= \T^* (K/H)$.We have $\Phi_G([k,\xi])=0$ 
if and only if $k\cdot\xi\in\ggot^\perp$. Hence the vector $\xi$ belongs to 
$$
k^{-1}\cdot\ggot^\perp\bigcap\hgot^\perp=(k^{-1}\cdot\ggot+\hgot)^\perp.
$$
Hence condition $(3)$ imposes that $\xi=0$.

$(2)\Longleftrightarrow (1)$ comes from the fact that $\Phi_G$ is a homogeneous map of degree one between the 
vector bundle $\T^*(K/H)$ and the vector space $\ggot^*$.
\end{proof}

\bigskip

Suppose now that the cotangent bundle $\T^*(K/H)$ is a {\em proper} Hamiltonian $G$-manifold. 
 Let us denoted $[\T^*(K/H)]_{\mu,G}$ the (compact) symplectic reduction at $\mu\in\what{G}$ of 
the G-Hamiltonian manifold $\T^*(K/H)$.    Then,

\begin{coro}
The multiplicity of $V_\mu^G$ in ${\rm L}^2(K/H)$ is 
equal to the quantization of the reduced space $[\T^*(K/H)]_{\mu,G}$.
\end{coro}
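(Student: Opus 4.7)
The plan is to unwind the definitions and invoke three tools already in place: the main identification $\qfor=\Qcal^\Phi$ (Theorem \ref{theo:intro}), the restriction property of formal quantization (Theorem \ref{theo:pep-formal}), and the computation of $\Qcal^\Phi_K(\T^*(K/H))$ carried out in (\ref{eq:Q-Phi-K-K/H}).

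First I would apply Theorem \ref{theo:pep-formal} to the pair $G\subset K$. By hypothesis $\T^*(K/H)$ is proper as a $K$-Hamiltonian manifold, and by the previous proposition the properness hypothesis in Theorem \ref{theo:pep-formal} is exactly the assumption we have made on $G$ (i.e.\ $\Phi_G$ is proper, equivalently ${\rm L}^2(K/H)\vert_G$ is admissible). Thus $\qfor_K(\T^*(K/H))$ is $G$-admissible and
\[
\qfor_G(\T^*(K/H))=\qfor_K(\T^*(K/H))\vert_G \quad\text{in}\quad \Rfor(G).
\]

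Next I would identify the right-hand side. By Theorem \ref{theo:intro} applied to the proper Hamiltonian $K$-manifold $\T^*(K/H)$, we have $\qfor_K(\T^*(K/H))=\Qcal^\Phi_K(\T^*(K/H))$, and this generalized character has been computed in (\ref{eq:Q-Phi-K-K/H}) to be ${\rm L}^2(K/H)$. Combining these gives
\[
\qfor_G(\T^*(K/H))={\rm L}^2(K/H)\vert_G \quad\text{in}\quad \Rfor(G),
\]
which is precisely the identity (\ref{eq:Q-Phi-K-G/H}) already stated in the introduction.

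Finally I would read off the multiplicities. By Definition \ref{def:formal-quant} the multiplicity of $V_\mu^G$ in $\qfor_G(\T^*(K/H))$ is equal to $\Qcal([\T^*(K/H)]_{\mu,G})$. Comparing with the left-hand side of the previous display yields the stated equality between $\dim\Hom_G(V_\mu^G,{\rm L}^2(K/H))$ and $\Qcal([\T^*(K/H)]_{\mu,G})$. There is essentially no obstacle in this proof; every ingredient is already in place. The only point that requires a moment of care is checking that the properness hypothesis needed to apply Theorem \ref{theo:pep-formal} coincides with our standing assumption on $G$, but this is supplied by the equivalences $(1)\Longleftrightarrow (7)$ in the preceding proposition.
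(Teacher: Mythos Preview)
Your proof is correct and follows essentially the same route as the paper: both apply Theorem~\ref{theo:pep-formal} to restrict from $K$ to $G$, use the computation (\ref{eq:Q-Phi-K-K/H}) to identify the $K$-quantization with ${\rm L}^2(K/H)$, and then read off multiplicities via Definition~\ref{def:formal-quant}. You are slightly more explicit than the paper in invoking Theorem~\ref{theo:intro} to pass from $\Qcal^\Phi_K$ to $\qfor_K$ and in justifying the properness hypothesis via the equivalences of the preceding proposition, but this only makes explicit what the paper takes for granted.
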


\begin{proof}
Using Theorem \ref{theo:pep-formal}, equality (\ref{eq:Q-Phi-K-K/H}) gives then 
\begin{eqnarray*}
\qfor_G(\T^*(K/H))&=&\qfor_K(\T^*(K/H))\vert_G\\
 &=& {\rm L}^2(K/H)\vert_G.
\end{eqnarray*}
In other words, the multiplicity of $V_\mu^G$ in ${\rm L}^2(K/H)$ is equal to the quantization of the 
reduced space $[\T^*(K/H)]_{\mu,G}$.
\end{proof}


{\small

}

\end{document}